\documentclass[reqno,a4paper]{amsart}


    \usepackage[utf8]{inputenc}
    \usepackage{fullpage}
    \usepackage{setspace}
    \usepackage{microtype}
    \usepackage[dvipsnames]{xcolor}
    \usepackage[colorlinks = true, linkbordercolor = {white}, urlcolor={purple}, linkcolor={purple}, citecolor = {purple}]{hyperref}
    \usepackage{cleveref}
    \usepackage{mathrsfs}
    \usepackage{amsmath}
    \usepackage{amsthm}
    \usepackage{amssymb}
    \usepackage{float}
    \usepackage{mathtools}
    \usepackage{tikz}
        \usetikzlibrary{patterns}
        \usetikzlibrary{graphs,graphs.standard,quotes}
        \usetikzlibrary{arrows}
    \usepackage[foot]{amsaddr}
    \usepackage{marginnote}
    \usepackage{enumitem}


    \setlength{\marginparwidth}{5em}
    \setlength{\parskip}{0.4\baselineskip plus 0.1\baselineskip minus 0.1\baselineskip}
    \setlength\parindent{0pt}
    \setlength{\footskip}{1.5\baselineskip}


    \newcommand{\R}{\mathbb{R}}
    \newcommand{\N}{\mathbb{N}}
    \newcommand{\Z}{\mathbb{Z}}
    \renewcommand{\emph}{\textsl}
    \renewcommand{\textit}{\textsl}


    \theoremstyle{definition}
        \newtheorem{thrm}{Theorem}[section]
        \newtheorem{lem}[thrm]{Lemma}
        \newtheorem{prop}[thrm]{Proposition}
        \newtheorem{cor}[thrm]{Corollary}
        
        \newtheorem{defin}[thrm]{Definition}
        
        \newtheorem{exmp}[thrm]{Example}
        \newtheorem{nonotheorem}{Theorem}
            
        \newtheorem{nonocorollary}[nonotheorem]{Corollary}
    \theoremstyle{remark}
        
    \theoremstyle{plain}
        \newtheorem{claimA}[]{Claim}
            
        \newtheorem{claim}[]{Claim}


    \Crefname{exmp}{Example}{Examples}
    \Crefname{claimA}{Claim}{Claims}


\title{The matrix potential game and structures of self-affine sets}

\date{\today}

\allowdisplaybreaks

\begin{document}

\author[R.\,A.\,Howat]{R.\,A.\,Howat}
\author[A.\,Mitchell]{A.\,Mitchell}
\author[T.\,Samuel]{T.\,Samuel }

\address[R.\,A.\,Howat]{School of Mathematics, University of Birmingham, Birmingham, UK}
\address[A.\,Mitchell]{Department of Mathematical Sciences, Loughborough University, Loughborough, UK}
\address[T.\,Samuel]{Department of Mathematics and Statistics, University of Exeter, Exeter, UK}

\subjclass[2020]{28A80; 11B25; 28A78}

\keywords{Matrix potential game, pattern, intersection, Hausdorff dimension, self-affine set.}

\maketitle


\begin{abstract}
We present a new variant of the potential game and show that certain compact subsets of $\R^n$, including a large class of self-affine sets, are winning in our game. We prove that sets with sufficiently strong winning conditions are non-empty, provide a lower bound for their Hausdorff dimension, show that they have good intersection properties, and provide conditions under which, given $M \in \N$, they contain a homothetic copy of every set with at most $M$ elements. The applications of our game to self-affine sets are new and complement the work of Yavicoli \emph{et al.} (\emph{Int.\,Math.\,Res.\,Not.\,IMRN} 2023 and \emph{Math.\,Z.} 2022) for self-similar sets.
\end{abstract}

\section{Introduction}

Topological games, such as the \emph{Schmidt game} and its generalisations, are an important tool in metric number theory and fractal geometry.
Winning sets for the Schmidt game have full Hausdorff dimension and are countably stable under intersection.
Quantitative versions of the Schmidt game, such as the \emph{potential game} introduced by Broderick, Fishman and Simmons~\cite{broderick2017quantitative-OriginalRestrictedPotential} also have the property that countable intersections of winning sets are winning.
Since Hausdorff dimension is not stable under even finite intersections, topological games thus provide a useful tool for proving that sets have large intersection.

One of the main applications of
topological games has been in proving the existence of patterns in sets.
Hausdorff dimension alone is insufficient to show the existence of patterns; indeed, there exist examples of sets with full Hausdorff dimension that avoid patterns~\cite{MissingAllrectanglesKeleti,MissingcountablymeanytrianglessKeleti}.
On the other hand, for every $M \in \N$, there exists a set with non-full Hausdorff dimension that contains a homothetic copy of every set with at most $M$ elements~\cite{falconer2022intersections-ThicknessBasedGame,yavicoli2022thickness-BoundaryBasedGame}.
Much work has been dedicated to proving the existence of patterns in various families of sets, see for example~\cite{broderick2017quantitative-OriginalRestrictedPotential,pattsparcesets,falconer2022intersections-ThicknessBasedGame,Arithmetic_progressions_in_sets_of_fractional_dimension,Smallsetscontaining,OriginalAlexiaPaper,yavicoli2022thickness-BoundaryBasedGame}.
Notably, in~\cite{broderick2017quantitative-OriginalRestrictedPotential,falconer2022intersections-ThicknessBasedGame,OriginalAlexiaPaper,yavicoli2022thickness-BoundaryBasedGame}, the potential game was used to show the existence of patterns in several families of sets in $\R^n$.
These include a broad family of self-similar sets and compact sets with a certain cut-out structure.
However, a drawback of the potential game is that it is not applicable to attractors of strictly affine iterated function systems.

In the present paper, we introduce a new variant of the potential game, which we call the \emph{matrix potential game}.
A broad class of self-affine sets are winning for this game, and the techniques that we develop allow 
us to obtain results on the dimensions of such sets, with applications to patterns and intersections.
The game is played on $\R^n$, equipped with the square metric, and has defining parameters $\alpha,A,c,\rho_1,\rho_2$, where $\alpha,c\in(0,1)$, $\rho_1\geq\rho_2> 0$ and $A$ is a diagonal ($n\!\times\!n$)-matrix with diagonal entries $\beta_{11},\ldots,\beta_{nn} \in(0,1)$.
The variables $\rho_1$ and $\rho_2$ control the first play of the game for Player\,I, the matrix $A$ is used to construct the subsequent plays of Player\,I, and $\alpha$ restricts the possible responses by Player\,II.
Our main result provides sufficient conditions for when a winning set is non-empty and in many cases provides a lower bound on its Hausdorff dimension.

\begin{nonotheorem}\label{nnthm:winning-set-dimension}
    Let $A$ be a diagonal ($n\!\times\!n$)-matrix with diagonal entries $\beta_{11},\ldots,\beta_{nn}\in(0,1/5)$.  Let $\alpha, c \in (0,1)$ and $\rho_1, \rho_2 \in \mathbb{R}$ with $\rho_1\geq\rho_2 > 0$.
    If $S$ is winning in the $(\alpha,A,c,\rho_2,\rho_1)$-potential game, and satisfies an additional mild assumption, for all $y\in\R^n$ the set $S\cap \big(A(B[0,\rho_2])+y\big)$ is non-empty and
    \begin{align*}
        \dim_{H}\left(S\cap \left(A(B[0,\rho_2])+y\right)\right)\geq\max\left\{ n-K,0\right\},
    \end{align*}
    where $K$ is a constant dependent only on the parameters of the game. Moreover, there exist many examples for which $n-K > 0$.
\end{nonotheorem}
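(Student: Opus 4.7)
The plan is to construct, for each $y\in\R^n$, a Cantor-like subset of $S\cap(A(B[0,\rho_2])+y)$ by playing the matrix potential game against an adversarial strategy for Player\,II, organising Player\,I's moves into a rooted tree of nested axis-aligned rectangles. The root is $A(B[0,\rho_2])+y$. Given a node $R_w$ at level $|w|$, instead of having Player\,I play a single child we select a maximal family of pairwise disjoint translates $\{A R_w + z_i\}_{i}$ contained in $R_w$, each of which is a legal Player\,I move. Because $\beta_{ii}<1/5$ for every $i$, each coordinate direction admits at least $\lfloor 1/(2\beta_{ii})\rfloor\geq 2$ disjoint translates, so the branching number at every node is at least $N:=\prod_{i=1}^{n}\lfloor 1/(2\beta_{ii})\rfloor \geq 2^{n}$. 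Non-emptiness is then immediate from compactness: the rectangles along any infinite branch shrink to a single point, which, by the winning hypothesis on $S$ (and the mild assumption controlling the effect of Player\,II's plays), must lie in $S$.

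Next I would estimate the Hausdorff dimension from below via a mass distribution principle. Place the uniform probability measure $\mu_{k}$ on the surviving level-$k$ nodes (each child receives mass $N_w^{-1}$ of its parent) and let $\mu$ be a weak-$*$ limit; by construction $\mu$ is supported on the Cantor-like subset of $S\cap(A(B[0,\rho_2])+y)$. To bound $\mu(B(x,r))$ for small $r$ I would locate the unique level $k$ for which the smallest side length of a level-$k$ rectangle lies between $r$ and $\beta_{\min}r$, and then count, direction-by-direction, how many such axis-aligned rectangles can intersect the square ball $B(x,r)$; the use of the square metric makes this a product count of the form $\prod_i (1+r/\ell_i^{(k)})$. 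Combining this upper bound on $\mu(B(x,r))$ with the mass distribution principle yields $\dim_H\geq n-K$ where $K$ absorbs the logarithmic loss from Player\,II's potential (quantified by $\alpha,c$), the aspect-ratio corrections $\log\beta_{ii}/\log\beta_{jj}$, and a bounded overhead from the ratio $\rho_1/\rho_2$.

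The main obstacle I expect is bookkeeping the interaction between the anisotropy of $A$ and the losses inflicted by Player\,II. When the $\beta_{ii}$ are markedly unequal, level-$k$ rectangles become very eccentric at different rates in different coordinates, so the naive count of rectangles meeting $B(x,r)$ must be done at different levels in different directions, and the resulting dimension bound depends delicately on these ratios; this is where the diagonal structure of $A$ is indispensable. A secondary difficulty is ensuring that enough children survive Player\,II's moves at each level, but the $c$-summability built into the potential game should bound the expected number of destroyed children by a geometric factor, so that by pigeonholing (or by a standard telescoping argument) a definite positive fraction of the $N$ children survives at every level, and the branching rate $\log N$ outpaces these losses.

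Finally, once the formula for $K$ is extracted, the assertion that $n-K>0$ in many examples is checked by specialising: taking all $\beta_{ii}$ close to $1/5$ and $\alpha,c$ sufficiently small makes $K$ arbitrarily close to a small constant, so the dimension bound is strictly positive whenever $n$ is large enough, and in fact non-trivial for every $n\geq 1$ in favourable parameter ranges.
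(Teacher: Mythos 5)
Your high-level architecture matches the paper's: both build a Cantor-like tree of nested level-$k$ boxes inside $A(B[0,\rho_2])+y$, show it is non-empty, and extract a dimension lower bound from a uniform branching estimate (the paper uses a pre-packaged version of the mass distribution principle from Kleinbock--Weiss, while you propose to run it by hand, but these are equivalent). Where your proposal has a genuine gap is the step you dismiss as ``pigeonholing or a standard telescoping argument'': showing that a definite, quantified fraction of the children survive Player~II's deletions at every level. This is in fact the heart of the proof and it does \emph{not} follow in the way you sketch. Three specific points.

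First, the $c$-summability condition controls the sum $\sum_i(\prod_j\beta_{jj}^{q_i})^c$, not the number of deleted children. Converting the weighted deletion budget into a count of level-$\ell$ boxes hit by the deletions requires a volume/overlap estimate (how many grid boxes of level $\ell$ can meet a set $A^{q}(B[0,r])+y$), and this estimate is genuinely $n$-dependent and anisotropy-sensitive; it is not a ``geometric factor''. The paper makes this precise in Claims~3 and~4 of the appendix.

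Second, playing and counting one level at a time does not work: at a single level the deletion budget $\alpha$ can be comparable to the branching number, so no positive fraction is guaranteed to survive level by level. The paper instead groups the construction into blocks of $N=\lfloor\delta\alpha^{-1}\rfloor$ levels, so that the branching $\prod_j\beta_{jj}^{-N}$ grows geometrically while the accumulated budget grows only linearly in $N$. Your proposal never introduces this block structure, and the inequality hypotheses in the precise theorem statement exist precisely to make the block counting close.

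Third, ``survives'' cannot mean ``has not yet been deleted''. Player~II can spread deletions thinly over many levels so that no single box is deleted until very late, wiping out the Cantor set retroactively. The paper handles this by tracking a running potential $\phi_m$ (the accumulated weighted deletions threatening each box) and retaining at each block only the boxes in $\mathcal{D}'$ with small $\phi$; Claim~B is the counting lemma showing that enough such ``good'' boxes remain. You would also need the projection maps $\pi_k$ (or an equivalent device) to ensure each branch of your tree is a legitimate sequence of nested Player~I plays so that the winning strategy actually applies to the limit point. As written, your proposal acknowledges these as obstacles but offers no mechanism to overcome them, so it does not constitute a proof.
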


The precise statement of \Cref{nnthm:winning-set-dimension} is given in \Cref{thm:winning-set-dimension}. As a consequence, we obtain the following applications to patterns and intersections.

\begin{nonocorollary}\label{nncor:patterns}
    Assume the setting of Theorem~\ref{nnthm:winning-set-dimension}, and assume there exist $\gamma\in(0,1)$ and $M\in \N$ with
        \begin{align}\label{eq: equations in CorB}
            M\alpha^c\leq \gamma^{2}\left(1-\left(\prod_{j=1}^n\beta_{jj}\right)^{1-c}\right) \text{\,\,\,\,\, and \,\,\,\,\,} 
            \prod\limits_{j=1}^n\left( \frac{1}{3}(1-5\beta_{jj}^{\lfloor\gamma(M^{\frac{1}{c}}\alpha)^{-1}\rfloor})\right)>8^n (1+2^{2n+1})\gamma.
        \end{align} 
    For every winning set $S$ of the $(\alpha,A,c,\rho_2,\rho_1)$-potential game, every $y \in \R^n$, every finite set $C$ with at most $M$ elements and all $\lambda>0$ sufficiently small, there exists a non-empty set $X \subseteq \R^n$ such that for all $x \in X$, we have that $\lambda C + x \subseteq S \cap B[y,\rho_2]$.
\end{nonocorollary}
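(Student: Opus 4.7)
The goal is to produce, for every finite $C\subseteq\R^n$ with $|C|\leq M$ and every sufficiently small $\lambda>0$, a non-empty set of points $x$ such that $\lambda C + x \subseteq S\cap B[y,\rho_2]$. The standard reduction is that the existence of such an $x$ is equivalent to $x\in\bigcap_{c\in C}(S-\lambda c)$ together with $x$ lying close enough to $y$ that $\lambda C + x\subseteq B[y,\rho_2]$. The plan is therefore to show that this intersection of $M$ translates of $S$ is itself winning in a suitably modified matrix potential game, and then to invoke \Cref{nnthm:winning-set-dimension} to guarantee non-emptiness.

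The first step is routine: each translate $S-\lambda c$ inherits the winning property of $S$ (with a correspondingly translated initial ball for Player\,I), since translation is an isometry and the game is defined by relative positions. The crucial second step is to combine the $|C|\leq M$ individual winning strategies into a single strategy for Player\,II in a game with target $\bigcap_{c\in C}(S-\lambda c)$. The natural attempt is to let Player\,II respond at each round so as to simultaneously dodge all $M$ forbidden sub-balls posed by Player\,I. The first inequality in~\eqref{eq: equations in CorB}, namely $M\alpha^c \leq \gamma^{2}(1-(\prod_{j=1}^n\beta_{jj})^{1-c})$, is precisely the quantitative bound ensuring that the aggregate potential of $M$ obstructions is dominated by the free volume produced by a single application of $A$; the factor $(1-(\prod\beta_{jj})^{1-c})$ measures this excess room, while $\alpha^c$ is the standard per-round decay constant in a potential-type game.

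The second inequality in~\eqref{eq: equations in CorB} plays the role of the geometric non-degeneracy condition feeding into \Cref{nnthm:winning-set-dimension}. The exponent $k:=\lfloor\gamma(M^{1/c}\alpha)^{-1}\rfloor$ is the number of rounds over which one must accumulate contractions of $A$ before $A^k$ is small enough that $M$ simultaneous obstructions fit safely inside the remaining sub-ball, so that the dimension estimate $n-K$ produced by \Cref{nnthm:winning-set-dimension} remains strictly positive; the right-hand side $8^n(1+2^{2n+1})\gamma$ encodes covering and packing constants in $(\R^n,\|\cdot\|_\infty)$. Once both inequalities are in force, \Cref{nnthm:winning-set-dimension} applied to the intersection yields a non-empty subset of $A(B[0,\rho_2])+y$, and any point $x$ therein, for $\lambda$ small enough that $\lambda C + x\subseteq B[y,\rho_2]$, supplies a homothetic copy of $C$ inside $S\cap B[y,\rho_2]$. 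The set $X$ of all such $x$ is then exactly the object the corollary demands.

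The main obstacle is the second step: constructing the explicit combined strategy and tracking potentials carefully enough to match both parts of~\eqref{eq: equations in CorB} simultaneously. Because $A$ acts anisotropically, the bookkeeping for how many sub-balls of which size are available at each round is subtler than in the classical potential game; the anticipated tactic is to group $k$ consecutive rounds of the matrix potential game into a single effective round, so that the compound contraction $A^k$ behaves isotropically enough to absorb all $M$ obstructions at once. The appearance of $\lfloor\gamma(M^{1/c}\alpha)^{-1}\rfloor$ in the second inequality is a strong indication that this blocking of rounds is the correct device, and the delicate point will be to verify that the resulting one-shot strategy still satisfies the strategy-compatibility constraints of the matrix potential game as formulated in the precise version, \Cref{thm:winning-set-dimension}.
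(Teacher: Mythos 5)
Your high-level plan (translate $S$ by each element of $\lambda C$, intersect, invoke the main theorem) agrees with the paper, but the proposal contains a conceptual misdirection and leaves a genuine gap.

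What you call the ``main obstacle'' --- constructing a combined strategy for $\bigcap_{c\in C}(S-\lambda c)$ --- is not an obstacle: it is precisely Lemma~\ref{lem:countableintersections}, already proved. If each translate is $(\alpha,A,c,\rho_2,\rho_1)$-winning, the intersection is $(M^{1/c}\alpha,A,c,\rho_2,\rho_1)$-winning by that lemma, since $(\sum_{i=1}^M \alpha^c)^{1/c} = M^{1/c}\alpha$. Substituting $M^{1/c}\alpha$ for $\alpha$ and $\gamma$ for $\delta$ in the hypotheses of \Cref{thm:winning-set-dimension} then reproduces exactly the two inequalities in~\eqref{eq: equations in CorB}. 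In particular, the ``anticipated tactic'' of grouping $k=\lfloor\gamma(M^{1/c}\alpha)^{-1}\rfloor$ rounds into a block is a red herring: that exponent appears only because \Cref{thm:winning-set-dimension} internally sets $N=\lfloor\delta\alpha^{-1}\rfloor$, and the substitution $\alpha\mapsto M^{1/c}\alpha$, $\delta\mapsto\gamma$ produces the displayed form. No new strategy engineering or round-blocking is invented in proving this corollary; all of that lives inside the already-proved \Cref{thm:winning-set-dimension}.

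The genuine gap is that you do not explain how to force $\lambda C + x\subseteq B[y,\rho_2]$ within the framework of the game. Translation by $a_i$ pushes part of $S\cap B[y,\rho_2]$ outside the ball where the game is played, so one cannot directly assert that $S\cap B[y,\rho_2]-a_i$ is winning. The paper works instead with $S_i\cup(\R^n\setminus B[y-a_i,\rho_2])$, which is winning because it contains $S-a_i$, then applies \Cref{thm:winning-set-dimension} to the intersection of these unions, and finally shows that for $\lambda<\rho_2(1-\beta_{\max})/\operatorname{diam}(C)$ the portion of $A(B[0,\rho_2])+y$ lying outside $\bigcap_i B[y-a_i,\rho_2]$ is empty --- so the non-empty point returned by the theorem actually lies in $X$. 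Without this step the reduction does not close, because non-emptiness of $\bigcap_i(S-\lambda c_i)$ alone does not locate a point inside $B[y,\rho_2]$.
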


\begin{nonocorollary}\label{nncor:intersections}
    Assume the setting of \Cref{nncor:patterns}. 
    Let $F_1,\ldots,F_M$ be winning sets of the $(\alpha,A,c,\rho_2,\rho_1)$-potential game and let $y \in \R^n$.
    The set $(\cap_{i=1}^M F_i)\cap \big(A(B[0,\rho_2])+y\big)$ is non-empty and moreover
    \begin{align*}
    \dim_{H}((\cap_{i=1}^M F_i)\cap \big(A(B[0,\rho_2])+y\big))\geq \max\left\{ n-K,0\right\},
    \end{align*}
    where $K$ is a constant dependent only on the parameters of the game and $M$. Additionally, there exists a large class of sets for which $n-K > 0$.
\end{nonocorollary}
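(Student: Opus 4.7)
My plan is to reduce Corollary~C to Theorem~A (\Cref{nnthm:winning-set-dimension}) via an intersection lemma for winning sets of the matrix potential game. The basic principle, familiar from potential-style games, is that if $F_1,\ldots,F_M$ are each winning in the $(\alpha,A,c,\rho_2,\rho_1)$-game, then Player~I can interleave the $M$ individual strategies: at every round Player~I plays the union of the $M$ individual restricted regions. Because each of those restrictions uses ``potential'' at most $\alpha^c$ per round, the union uses at most $M\alpha^c=(M^{1/c}\alpha)^c$, so that $\bigcap_{i=1}^M F_i$ is winning in the $(M^{1/c}\alpha,A,c,\rho_2,\rho_1)$-potential game. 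My first step would be to formalise and prove this intersection lemma, checking in particular that the interleaved strategy is compatible with the diagonal matrix $A$ governing Player~I's moves and that the legal moves across the $M$ strategies can be synchronised.

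The second step is to apply Theorem~A to $\bigcap_{i=1}^M F_i$, viewed as a winning set of the $(M^{1/c}\alpha,A,c,\rho_2,\rho_1)$-game. The hypotheses inherited from Corollary~B are designed precisely for this: the first inequality of \Cref{eq: equations in CorB} says $M\alpha^c\leq \gamma^2(1-(\prod_{j=1}^n\beta_{jj})^{1-c})$, so with the inflated parameter $\alpha':=M^{1/c}\alpha$ this becomes $(\alpha')^c\leq \gamma^2(1-(\prod_j\beta_{jj})^{1-c})$, exactly of the form required to run the argument underlying Theorem~A. The second inequality of \Cref{eq: equations in CorB}, involving $\lfloor\gamma(M^{1/c}\alpha)^{-1}\rfloor$, supplies the geometric estimate that controls the branching of the Cantor-type subset constructed inside $(\bigcap_i F_i)\cap (A(B[0,\rho_2])+y)$. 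Both non-emptiness and the dimension lower bound $\max\{n-K,0\}$ would then follow, with $K$ depending on the inflated parameter and hence on $M$. The claim that there is a large class of examples with $n-K>0$ would be verified by exhibiting explicit ranges of $\beta_{jj}$ and $M$ for which the two inequalities of \Cref{eq: equations in CorB} admit a common $\gamma\in(0,1)$.

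The main obstacle I expect is the verification that the ``mild additional assumption'' in Theorem~A is preserved under intersection. Since that assumption is naturally stated on a set rather than on a particular winning strategy, one must either argue that the combined winning strategy produced above yields a set satisfying the same structural property, or impose the mild assumption on each individual $F_i$ and deduce it for $\bigcap_i F_i$. A secondary subtlety is the bookkeeping of the constant $K$: replacing $\alpha$ by $M^{1/c}\alpha$ inflates the exponent that enters through $\lfloor\gamma(M^{1/c}\alpha)^{-1}\rfloor$, so one must confirm that there is still a genuinely wide regime of parameters $\beta_{11},\ldots,\beta_{nn}$ and values of $M$ for which $n-K>0$.
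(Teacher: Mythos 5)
Your approach is essentially the one the paper takes: combine the countable-intersection closure property of winning sets (the paper's \Cref{lem:countableintersections}, which you re-derive) to show $\bigcap_{i=1}^M F_i$ is $(M^{1/c}\alpha,A,c,\rho_2,\rho_1)$-winning, then invoke \Cref{thm:winning-set-dimension} with the inflated parameter $\alpha'=M^{1/c}\alpha$, noting that the hypotheses \eqref{eq: equations in CorB} are exactly \eqref{eq:Thm-lower-bound-ass-1} evaluated at $\alpha'$. Two small points: first, the interleaving you describe is performed by Player~II (who chooses the deletion collections), not Player~I; you have the idea right but the roles swapped. Second, your anticipated obstacle does not arise: the ``additional mild assumption'' in \Cref{nnthm:winning-set-dimension} is, in the precise \Cref{thm:winning-set-dimension}, a constraint on the game parameters $(\alpha,A,c)$ --- namely that some $\delta\in(0,1)$ satisfies \eqref{eq:Thm-lower-bound-ass-1} --- and not a structural property of the set $S$, so there is nothing to preserve under intersection beyond checking the inflated $\alpha'$ still admits a valid $\delta$, which is precisely what \eqref{eq: equations in CorB} guarantees.
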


The precise statements of \Cref{nncor:patterns} and \Cref{nncor:intersections} are given in \Cref{thm:patterns} and \Cref{cor:intersections}, respectively. 
These results apply to a broad class of self-affine sets, as well as cut-out sets with rectangular cut-outs with sides parallel to the coordinate axes.
Consequently, we obtain the following.

\begin{nonocorollary}\label{nncor:self-affine-patterns}
    Given $M\in \N$, there exists a totally disconnected attractor of an iterated function system consisting of strictly affine maps that contains a homothetic copy of every set with at most $M$ elements.
    Moreover, such sets can be explicitly constructed.
\end{nonocorollary}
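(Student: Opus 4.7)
The plan is to combine \Cref{nncor:patterns} with the paper's forthcoming result that a broad class of self-affine sets is winning in the matrix potential game. For each $M \in \N$, the idea is to first tune the game parameters $(\alpha, A, c, \rho_1, \rho_2)$ so that inequality \eqref{eq: equations in CorB} is satisfied, then exhibit an explicit self-affine IFS whose totally disconnected attractor is winning for the corresponding game, and finally invoke \Cref{nncor:patterns} to extract the homothetic copies.

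For the parameter choice, I would fix $c \in (0,1)$ and pick diagonal entries $\beta_{11}, \ldots, \beta_{nn} \in (0, 1/5)$ that are not all equal, so that the IFS is genuinely strictly affine rather than a similarity IFS in disguise; set $A = \operatorname{diag}(\beta_{11}, \ldots, \beta_{nn})$. I would then choose $\gamma > 0$ small enough that $8^n(1 + 2^{2n+1})\gamma < 3^{-n}$, which gives slack for the second inequality in \eqref{eq: equations in CorB} once $\alpha$ is taken small enough that each $\beta_{jj}^{\lfloor \gamma(M^{1/c}\alpha)^{-1}\rfloor}$ is negligible. The same smallness of $\alpha$ makes $M\alpha^c$ arbitrarily small, so the first inequality is also satisfied; the parameters $\rho_1 \geq \rho_2 > 0$ may be chosen freely and normalised so that $B[0,\rho_2]$ sits comfortably inside the unit cube.

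Next, I would construct the IFS explicitly as $\{S_i\}_{i=1}^N$ with $S_i(x) = Ax + t_i$, placing the translates $t_i$ so that the images $S_i([0,1]^n)$ are pairwise disjoint with generous gaps inside $[0,1]^n$. Since $\beta_{jj} < 1/5$ for every $j$, this can be done under strong separation, which guarantees that the attractor $S$ is totally disconnected. The $t_i$ would be arranged on a fine grid (scaled by $A$), and $N$ chosen large enough that around every point in $[0,1]^n$ at every scale there are sufficiently many descendant cylinders to parry any legal move by Player~II of size $\alpha$. All of these specifications depend only on $M$ and $n$, so the construction is fully explicit and delivers on the ``moreover'' clause.

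The hard part will be the calibration step in the previous paragraph: verifying that this concrete IFS satisfies the paper's self-affine winning criterion with exactly the $(\alpha, A, c, \rho_2, \rho_1)$ selected earlier. This is a matching problem between the combinatorics of the IFS (number and placement of cylinders) and the quantitative hypotheses of the winning theorem, and is where I would rely on the technical self-affine winning results developed in later sections. Once this is in place, \Cref{nncor:patterns} applied to $S$ produces a non-empty set of centres $x$ with $\lambda C + x \subseteq S \cap B[y,\rho_2]$ for every finite $C$ with $|C| \leq M$ and all sufficiently small $\lambda > 0$, which yields the required homothetic copies and completes the proof.
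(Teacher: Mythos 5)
Your overall strategy is the same as the paper's: construct a totally disconnected self-affine Moran/IFS attractor, show it is winning for the matrix potential game with small enough $\alpha$, and then apply \Cref{nncor:patterns}. However, there is a genuine gap at the step you explicitly flag as ``the hard part,'' and in fact the logical dependency between the game parameter $\alpha$ and the IFS is reversed in your outline in a way that hides where the work actually is.

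You propose to first choose $\alpha$ small (so that both inequalities in \eqref{eq: equations in CorB} hold) and then ``exhibit an explicit self-affine IFS whose totally disconnected attractor is winning for the corresponding game.'' But in the paper's framework, $\alpha$ is not a free parameter that one picks and then builds an IFS to match. The parameter $\alpha$ is an \emph{output} of the SDIC verification: for a given construction (as in \Cref{prop:RCO-winning-cond} or \Cref{prop:RCD-winning-conditions}), one counts how many deleted blocks at each level can meet a window of comparable size, and this count determines the winning $\alpha$. So your smallness conditions on $\alpha$ and $\beta_{jj}^{\lfloor\gamma(M^{1/c}\alpha)^{-1}\rfloor}$ are constraints that must be \emph{verified}, not assumed; as $\alpha$ shrinks, the number $N$ of maps in your IFS and the contraction ratios $\beta_{jj}$ change, and it is not obvious that the resulting winning $\alpha$ decreases fast enough. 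The paper resolves this concretely for the family $\mathcal{RCD}(U,U+\ell)$: by \Cref{prop:RCD-winning-conditions} the winning parameter is $\alpha_U(c,t) = (9(U-1)(U+\ell-1)N_t)^{1/c}(U(U+\ell))^{-(1+t)}$, and the proof of \Cref{prop:self-affine-patterns} shows that with $c=9/10$, $t=1$, this quantity tends to $0$ as $U\to\infty$, since the numerator grows polynomially of degree roughly $3/c$ in $U$ while the denominator grows like $U^4$. It is precisely this rate comparison (and not a generic ``broad class of self-affine sets is winning'' theorem, which the paper does not state) that makes the argument close. Your outline does not carry out, or even indicate how to carry out, this quantitative matching for your proposed grid-like IFS, so the proof is incomplete. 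A secondary (repairable) issue: for the ``moreover'' clause you should identify the family precisely and exhibit an IFS within it, as the paper does by choosing a specific member of $\mathcal{RCD}(U,U+\ell)$, since membership in a Moran family alone does not automatically give an explicit IFS attractor.
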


The existence of explicit examples of such sets is verified in \Cref{prop:self-affine-patterns}.
This extends the work of~\cite{yavicoli2022thickness-BoundaryBasedGame}, where the analogous result was established for self-similar sets.

\subsection*{Outline}
In \Cref{sec:preliminaries}, we introduce notation that we use throughout, and provide the necessary preliminaries on fractal geometry.
The definition of the matrix potential game is presented in \Cref{sec:mp-game}, where we also  prove several fundamental results about winning sets.
In \Cref{sec:winning-sets}, we provide a general strategy for proving sets are winning and present two broad families of winning sets, which are not captured by the classical potential game.
\Cref{sec:winng-sets-HD} contains the proof of our main result; namely, that winning sets with sufficiently strong winning conditions are non-empty and have non-zero Hausdorff dimension.
In \Cref{sec:applications}, we present applications of our results to patterns and intersections.
Additionally, we provide an interesting application to the distance sets of winning sets.

\section{Preliminaries}\label{sec:preliminaries}

Throughout this article, we work in the metric space $(\R^n,d_{\infty})$, where $d_{\infty}$ denotes the square metric, defined by $d_{\infty}(x,y) = \max_{i \in \{1,2, \dots, n\}} \lvert x_{i} - y_{i} \rvert$ for all $x=(x_{1},\dots,x_{n}), y=(y_{1},\dots,y_{n}) \in \R^{n}$.
Given $x \in \R^{n}$ and $r>0$, we let $B[x,r]$ denote the closed ball in $\R^{n}$ with centre $x$ and radius $r$ (with respect to $d_{\infty}$). 
Given $C\subseteq \R^n$, $\lambda\in\R$ and $b\in\R^n$ we write
    \begin{align*}
    \lambda C+b= \{x\in\R^n:x=\lambda x'+b \; \text{for some} \; x'\in C\}.
    \end{align*}
For a diagonal $(n\!\times\!n)$-matrix $A=\operatorname{diag}(a_{1,1},\ldots,a_{n,n})$ and $t\geq 0$, we write $A^t=\operatorname{diag} (a^t_{1,1},\ldots,a^t_{n,n})$.

We let $\N = \{1,2,3,\ldots\}$ denote the natural numbers and set $\N_0 = \N \cup \{0\}$. 
Given a finite subset $J$ of $\N$, we let $J^{\N}$ denote the space of all right-infinite words consisting of letters from the alphabet $J$, which we equip with the product topology inherited from the discrete topology on $J$. 
Given $k \in \N$, we let $J^k = \{j_1 \cdots j_k \, : \, j_i \in J\}$ denote the set of all finite words with letters in $J$ and write $\epsilon$ for the empty word.
For each $w \in J^{\N}$ and $i,j \in \N_0$ with $i \leq j$, we write $w_{[i,j]} = w_i \cdots w_j$.

Given $F \subseteq R^n$ and $s \geq 0$, the \emph{$s$-dimensional Hausdorff measure} of $F$ is the quantity
\begin{align*}
    \mathcal{H}^s(F)= \lim_{\delta\rightarrow0}\inf\left\{ 
 \sum\operatorname{diam}(U_i)^s:\{U_i\}_{i\in I} \; \text{is a countable $\delta$-cover of} \; F\right\}.
\end{align*}
The \emph{Hausdorff dimension} of $F$ is then defined by 
\begin{align*}
    \dim_H(F)= \inf\{s\geq 0:\mathcal{H}^s(F)=0\}=\sup\{s\geq 0:\mathcal{H}^s(F)=\infty\}.
\end{align*}

Given a finite non-empty alphabet $I$, an \emph{iterated function system (IFS)} is a collection $\Phi=\{\phi_i\}_{i\in I}$ of contracting self-maps on compact subsets of $\R^n$.
By a classical result of Hutchinson \cite{HutchinsonResult}, there exists a unique non-empty compact set $F$, called the \emph{attractor} of the IFS, satisfying
\begin{align*}
    F=\bigcup_{i\in I}\phi_i(F).
\end{align*}
If the IFS consists of similarities, then $F$ is called \emph{self-similar}.
More generally, if the IFS consists of affine maps, then $F$ is called \emph{self-affine}. 

Another broad class of sets that often have non-integer Hausdorff dimension is given by Moran sets. 
Let $\{T_{\omega}:\omega\in\Lambda_k\}_{k\in\N_0}$ be a collection of non-empty compact subsets of $\mathbb{R}^{n}$ such that 
\begin{enumerate}
    \item $\Lambda_0=\epsilon$, and for $k\in\N$, $\Lambda_k\subset \Lambda_{k-1}\times \N$ and $\Lambda_k$ is finite for all $k\in\N$, and
    \item $T_{\epsilon}$ is a closed ball or $\R^n$ and for each $k\in\N$ and $\omega=\omega_1\dots\omega_k\in\Lambda_k$ we have  $T_{\omega}$ is a closed ball and $T_{\omega}\subseteq T_{\omega_{1}\dots\omega_{k-1}}$.
\end{enumerate}
The \emph{Moran set} associated to $\{T_{\omega}:\omega\in\Lambda_k\}_{k\in\N_0}$ is defined by
    \begin{align*}
    E=\bigcap_{k\in\N_0}\bigcup_{\omega\in\Lambda_k}T_{\omega}.
    \end{align*}
For a detailed introduction to attractors of iterated function systems and Moran sets, see \cite{FractalGeometry,TechniquesInFractalGeometry,HutchinsonResult}.

\section{The matrix potential game}\label{sec:mp-game}

The potential game was introduced in \cite{MR3826896} and a quantitative variant was defined in \cite{broderick2017quantitative-OriginalRestrictedPotential}.
The potential game is a two player topological game, played in a complete metric space.  However, in what follows, to highlight the core ideas, we restrict our treatment to the $n$-dimensional Euclidean setting.

\subsection{The potential game}
In the simplest variant of the potential game, both players are given parameters $\alpha,\beta,c>0$ and the $(\alpha,\beta,c)$-potential game is played as follows:
On the first turn Player\;I plays some closed ball $B[x_1,r_1]$ and Player\;II responds by choosing (effectively deleting) a countable collection of closed balls $\mathcal{A}(B[x_1,r_1])$ satisfying:
    \begin{align*}
    \sum_{B\in \mathcal{A}(B[x_1,r_1])} \operatorname{rad}(B)^c\leq (\alpha r_1)^c,
    \end{align*}
where $\operatorname{rad}(B)$ denotes the radius of $B$. Note, the sets $B\in \mathcal{A}(B[x_1,r_1])$ need not be contained in $B[x_1,r_1]$. For $m\in\N\setminus\{1\}$, on the $m$-th turn of Player\,I, they play $B[x_m,r_m]\subseteq B[x_{m-1},r_{m-1}]$ satisfying the condition that $r_m\geq \beta r_{m-1}$ and Player\;II responds by choosing (effectively deleting) a countable collection of closed balls $\mathcal{A}(B[x_{m},r_{m}], \dots, B[x_{1},r_{1}])$ satisfying the condition that
    \begin{align*}
    \sum_{B\in \mathcal{A}(B[x_{m},r_{m}], \dots, B[x_{1},r_{1}])} \operatorname{rad}(B)^c\leq (\alpha r_m)^c.
    \end{align*}
Player\;I must also satisfy the general rule that $r_m \rightarrow 0$ as $m \rightarrow \infty$.

By construction there exists a single point $x_{\infty}$, with $\{x_{\infty}\}=\cap_{m\in\N}B[x_m,r_m]$, called the \emph{outcome} of the game. 
A set $S\subseteq \R^n$ is an $(\alpha,\beta,c)$-winning set, 
if Player\;II has a strategy ensuring that $x_{\infty}\in S$ or 
    \begin{align*}
    x_{\infty} \not\in \bigcup_{m\in\N} \mathcal{A}(B[x_{m},r_{m}], \dots, B[x_{1},r_{1}]).
    \end{align*}    
The variation most commonly used allows Player\;II to, instead of choosing a countable collection of closed balls, choose a countable collection of closed neighbourhoods of given closed sets $L$ from a predetermined collection $\mathcal{H}$. The condition on the radii is then replaced by an analogous condition on the size of the neighbourhood of the chosen sets $L\in \mathcal{H}$, see \cite{broderick2017quantitative-OriginalRestrictedPotential,MR3826896,yavicoli2022thickness-BoundaryBasedGame}.

In \cite{falconer2022intersections-ThicknessBasedGame,OriginalAlexiaPaper,yavicoli2022thickness-BoundaryBasedGame}, variants of Newhouse thickness were introduced, and it was shown that certain sets with positive thickness are winning in the potential game (with different collections $\mathcal{H}$ depending on the thickness variant).
However, these strategies are not applicable to attractors of (strictly) affine iterated function systems, since their thickness is often zero.

Here, we introduce a new variant of the potential game in $\R^n$.
In this variant, Player\;I's plays are contracting affine transformations of some closed ball, where the transformation matrix $A$ is a diagonal matrix with positive diagonal entries.
Player\;II then deletes a countable collection of sets of the form $A^k(B[0,r])+z$ for some $k \geq 1$ and $z\in \R^{n}$, where $B[0,r]$ is the first play of Player\,I.
An advantage of this game is that the winning strategies that we present later in \Cref{sec:winning-sets} do not require positive thickness.
Thus, it can be applied to self-affine sets.

\subsection{A new potential game framework}

Our variant of the potential game is similar to the game introduced by Broderick, Fishman and Simmons \cite{broderick2017quantitative-OriginalRestrictedPotential}. Namely, if $A$ is a similarity, then the matrix potential game and that of \cite{broderick2017quantitative-OriginalRestrictedPotential} coincide.
The introduction of a matrix into this game is very natural and has been done for Schmidt's original game by Kleinbock and Weiss \cite{MR2581371}, but not as far as the authors are aware for the potential game.

\begin{defin}\label{def:winning}
    Let $A$ be a diagonal $(n\!\times\!n)$-matrix with diagonal entries $\beta_{11},\dots, \beta_{nn}\in (0,1)$.
    Given $\rho_1\geq \rho_2>0$, $\alpha>0$ and $c\in[ 0,1)$, the $(\alpha,A,c,\rho_2,\rho_1)$-potential game is played with the following rules, with Player\;II having the option to skip their turn:
    \begin{itemize}
        \item On their first turn, Player\;I plays a closed set $U_1=A(B[0,r])+b_1$ satisfying $\rho_1\geq r\geq \rho_2$.
        \item On their first turn, Player\;II responds by selecting a collection of tuples
        \begin{align*}
            \mathcal{A}(U_1) = \left\{(q_{i,1},y_{i,1}) :  q_{i,1} \in \R \; \text{with} \; q_{i,1} \geq 1, y_{i,1}\in\R^n \; \text{and} \; i\in I_1\right\},
        \end{align*}
        where $I_1$ is an at most countable index set.
        This defines the deleted set 
        \begin{align*}
        \bigcup_{\substack{(q_{i,1},y_{i,1})\\\in\mathcal{A}(U_1)}} \left(A^{q_{i,1}}(B[0,r])+y_{i,1}\right),
        \end{align*}
        which will be used to define the winning conditions at the end of the game. If $c>0$, then Player\;II's collection must satisfy
            \begin{align*}
            \sum\limits_{i\in I_1}\prod_{j=1}^n\left( \beta_{jj}^{q_{i,1}}\right)^c\leq \left(\alpha\prod_{j=1}^n\beta_{jj}\right)^{c}\hspace{-0.625em}.
            \end{align*}
        If $c=0$, then Player\;II can only choose one pair $(q_{1,1},y_{1,1})$ which satisfies:
            \begin{align*}
            \prod_{j=1}^n\beta_{jj}^{q_{1,1}}\leq\alpha \prod_{j=1}^n\beta_{jj}.
            \end{align*}
        \item For each $m\in\N\setminus\{1\}$, on their $m$-th turn, Player\;I plays a closed set $U_m=A^{m}\left(B[0,r]\right)+b_m$ such that $U_m\subseteq U_{m-1}$.
        \item On their $m$-th turn, Player\;II responds by selecting a collection of tuples 
        \begin{align*}
            \mathcal{A}(U_m,\dots,U_1) = \left\{(q_{i,m},y_{i,m}): q_{i,m} \in \R \; \text{with} \; q_{i,m} \geq 1,  y_{i,m}\in\R^n \; \text{and} \; i\in I_m\right\},
        \end{align*}
        where $I_m$ is an at most countable index set. 
        This defines the deleted set
        \begin{align*}
            \bigcup_{\substack{(q_{i,m},y_{i,m})\\\in\mathcal{A}(U_m,\dots,U_1)}} \left(A^{q_{i,m}}(B[0,r])+y_{i,m}\right).
        \end{align*}
        If $c>0$, then Player\;II's collection must satisfy
        \begin{align*}
        \sum\limits_{i\in I_m}\prod_{j=1}^n\left( \beta_{jj}^{q_{i,m}}\right)^c\leq \left(\alpha\prod_{j=1}^n\beta_{jj}^{m}\right)^{c}\hspace{-0.625em}. 
        \end{align*}
        If $c=0$, then Player\;II can only choose one pair $(q_{1,m},y_{1,m})$ which satisfies:
            \begin{align*}
            \prod_{j=1}^n\beta_{jj}^{q_{1,m}}\leq\alpha \prod_{j=1}^n\beta_{jj}^{m}.
            \end{align*}
    \end{itemize}
\end{defin}
By construction, there exists a single point $x_{\infty}$, with $\{x_{\infty}\} =\cap_{m\in\N}U_m$, called the outcome of the game and a deleted region,
    \begin{align}\label{eq:winning-condition-deletion}
        \operatorname{Del}(\mathcal{A}(U_1),\mathcal{A}(U_1,U_2),\dots) = \bigcup_{m\in\N} \bigcup_{\substack{(q_{i,m},y_{i,m})\\\in\mathcal{A}(U_m,\dots,U_1)}} \left(A^{q_{i,m}}(B[0,r])+y_{i,m}\right).
    \end{align}
We say that a set $S\subseteq \R^n$ is an \textit{$(\alpha,A,c,\rho_2,\rho_1)$-winning set}, if Player\;II has a strategy ensuring that if $x_{\infty}\not\in \operatorname{Del}(\mathcal{A}(U_1),\mathcal{A}(U_1,U_2),\dots)$, then $x_{\infty}\in S$.
    
    The restriction on Player\,II's plays when $c=0$ allows us to create a strategy where only one set is deleted each turn and the notation is chosen for this to be consistent with a desired monotonicity property of the game, see \Cref{lem:monotonicity}.

    A skip by Player\;II can be considered as Player\;II playing a collection of tuples that satisfies the conditions of the game but which has no impact on the outcome of the game, for instance, such that $\mathcal{A}(U_m,\dots,U_1) \cap U_{m} = \emptyset$. Thus, skips can be ignored. 
    
    One can also define the \emph{strong} $(\alpha,A,c,\rho_2,\rho_1)$-potential game by requiring Player\;I's plays to satisfy $U_{m}=A^{t_m}(B[0,r])+b_m$ where $t_{1}=1$, $t_m\in(t_{m-1},t_{m-1}+1]$ for $m\geq 2$, and $(t_{m})_{m\in \N}$ diverges to infinity.
    Player\;II's conditions are then changed by replacing $\beta_{jj}^m$ with $\beta_{jj}^{t_m}$ for each $j \in \{ 1, 2, \dots, n\}$.
    The strong game is what is typically called the potential game, see for example \cite{broderick2017quantitative-OriginalRestrictedPotential,falconer2022intersections-ThicknessBasedGame,MR3826896,yavicoli2022thickness-BoundaryBasedGame}. 
    However, here, we do not consider it as it leads to weaker results.
    We note, however, that all results in Sections\;\ref{sec:mp-game}, \ref{sec:winng-sets-HD} and \ref{sec:applications} hold for the strong variant since strong winning implies regular winning.

\subsection{Properties of winning sets}

In this section we show that winning sets for the matrix potential game satisfy two important properties that one expects winning sets for topological games to satisfy.
Namely, monotonicity and closure under countable intersections.

\begin{lem}[Monotonicity]\label{lem:monotonicity}
    Let $A$ be a diagonal $(n\!\times\!n)$-matrix with diagonal entries $\beta_{11},\dots, \beta_{nn}\in(0,1)$, $\rho_1 \geq \rho_2 > 0$, $\alpha \in (0,1]$ and $c\in[ 0,1)$. If $S$ is an $(\alpha,A,c,\rho_2,\rho_1)$-winning set, then for $\alpha',c',\rho_2',\rho_1'$ satisfying $\alpha\leq \alpha'$, $c'\geq c$, $\rho_2\leq\rho_2'\leq\rho_1'\leq \rho_1$, the set $S$ is $(\alpha',A,c',\rho_2',\rho_1')$-winning.
\end{lem}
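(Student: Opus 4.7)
The plan is to show that any winning strategy $\sigma$ for Player\,II in the $(\alpha,A,c,\rho_2,\rho_1)$-game transfers verbatim to a winning strategy in the $(\alpha',A,c',\rho_2',\rho_1')$-game. The point is that the new parameters make Player\,I's constraints \emph{more} restrictive (a tighter radius range $[\rho_2',\rho_1']\subseteq[\rho_2,\rho_1]$) while making Player\,II's constraints \emph{less} restrictive ($\alpha'\geq\alpha$, $c'\geq c$). So Player\,II simply feeds Player\,I's new play into $\sigma$ and replays $\sigma$'s response.

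First I would check that Player\,I's moves in the new game are admissible in the old one: the first move $U_1=A(B[0,r])+b_1$ satisfies $\rho_2'\leq r\leq\rho_1'$ and hence $\rho_2\leq r\leq\rho_1$, while the subsequent plays $U_m=A^m(B[0,r])+b_m$ with $U_m\subseteq U_{m-1}$ carry no parameter-dependent condition. Thus $\sigma$'s response $\mathcal{A}(U_m,\ldots,U_1)$ is well defined and satisfies the $(\alpha,c)$-constraint by hypothesis.

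The core computation is to verify that the very same collection satisfies the $(\alpha',c')$-constraint. For $c>0$ write $\delta_i:=\prod_{j=1}^n\beta_{jj}^{q_{i,m}}$ and $\Delta_m:=\alpha\prod_{j=1}^n\beta_{jj}^m$, so the old constraint $\sum_i\delta_i^c\leq\Delta_m^c$ forces $\delta_i\leq\Delta_m$ for every $i\in I_m$. Since $c'-c\geq 0$ and $x\mapsto x^{c'-c}$ is non-decreasing on $[0,\infty)$, we estimate
\begin{align*}
    \sum_{i\in I_m}\delta_i^{c'}
    = \sum_{i\in I_m}\delta_i^{\,c'-c}\,\delta_i^{c}
    \leq \Delta_m^{c'-c}\sum_{i\in I_m}\delta_i^{c}
    \leq \Delta_m^{c'}
    \leq \left(\alpha'\prod_{j=1}^n\beta_{jj}^{m}\right)^{c'},
\end{align*}
where the last inequality uses $\alpha\leq\alpha'$. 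This is precisely the new constraint. The boundary case $c=0$ is handled separately: here $\sigma$ returns a singleton $(q_{1,m},y_{1,m})$ with $\prod_{j=1}^n\beta_{jj}^{q_{1,m}}\leq\alpha\prod_{j=1}^n\beta_{jj}^m$, and if $c'=0$ one uses $\alpha\leq\alpha'$ directly, while if $c'>0$ one raises both sides to the $c'$-th power to obtain the singleton-collection version of the new constraint.

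Finally, because the transferred strategy deletes exactly the same sets as $\sigma$, the deleted region~\eqref{eq:winning-condition-deletion} and the outcome $x_\infty$ coincide with those of the corresponding play in the old game; so $x_\infty\notin\operatorname{Del}(\mathcal{A}(U_1),\mathcal{A}(U_1,U_2),\ldots)$ forces $x_\infty\in S$ by the winning property of $\sigma$, establishing that $S$ is $(\alpha',A,c',\rho_2',\rho_1')$-winning. The only substantive step is the interpolation inequality displayed above; the rest is bookkeeping and I do not anticipate a real obstacle.
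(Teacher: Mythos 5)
Your proposal is correct and takes essentially the same approach as the paper: transfer Player\,I's plays into the winning strategy for the original parameters, check the radius range still fits, and verify the deletion constraint under the relaxed $(\alpha',c')$. The only cosmetic difference is in the key inequality for $0<c\leq c'$: the paper invokes monotonicity of the $\ell^p$ quasi-norm, $\bigl(\sum_i\delta_i^{c'}\bigr)^{1/c'}\leq\bigl(\sum_i\delta_i^{c}\bigr)^{1/c}$, whereas you factor $\delta_i^{c'}=\delta_i^{c'-c}\delta_i^{c}$ and use the pointwise bound $\delta_i\leq\Delta_m$ extracted from the constraint; these are the same argument in different clothes, since the standard proof of that quasi-norm monotonicity is precisely your factoring trick.
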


\begin{proof}
    We show there exists a winning strategy for Player\;II in the $(\alpha',A,c',\rho_2',\rho_1')$-potential game.
    To do so, we show that for every valid play by Player\;I, Player\;II can respond using their winning strategy for the $(\alpha,A,c,\rho_2,\rho_1)$-potential game.

    Given $m\in\N$ and $U_m,\ldots, U_1$ played by Player\;I, let
        \begin{align*}
        \mathcal{A}(U_m,\dots,U_1)=  \left\{(q_{i,m},y_{i,m}):q_{i,m}\geq 1 \; \text{and} \; y_{i,m}\in\R^n\right\}_{i\in I_m}
        \end{align*}
    be Player\;II's $m$-th turn in the strategy for $S$ in the $(\alpha,A,c,\rho_2,\rho_1)$-potential game.
    We claim that in the $(\alpha',A,c',\rho_2',\rho_1')$-potential game, by selecting the collection $\mathcal{A}(U_m,\dots,U_1)$ on the $m$-th turn, Player\;II can ensure that if $x_{\infty}\not\in \operatorname{Del}(\mathcal{A}(U_1),\mathcal{A}(U_1,U_2),\dots)$, then $x_{\infty}\in S$, and hence, $S$ is $(\alpha',A,c',\rho_2',\rho_1')$-winning.
    To this end, observe that, given $m\in\N$, the collection of tuples $\mathcal{A}(U_m,\dots,U_1)$ satisfies the required conditions, limiting the size of deletion, since if $c=c'=0$ the required inequalities are immediately satisfied; and if $0 =c < c'$ we have $\mathcal{A}(U_m,..,U_1)$ consists of only one set and therefore 
        \begin{align*}
        \sum_{i\in I_m} \prod\limits_{j=1}^n \left(\beta_{jj}^{q_{i,m}}\right)^{c'}
        =\sum_{i\in I_m}\left( \prod\limits_{j=1}^n \beta_{jj}^{q_{i,m}}\right)^{c'}
        =\left( \prod\limits_{j=1}^n \beta_{jj}^{q_{i,m}}\right)^{c'}
        \leq \left(\alpha\prod_{j=1}^n\beta_{jj}^{m}\right)^{c'}
        \leq \left(\alpha'\prod_{j=1}^n\beta_{jj}^{m}\right)^{c'}\hspace{-0.625em}.
        \end{align*}
    Thus, it remains to consider the case $c > 0$. Note, if $0< c\leq c' < 1$, then the following inequality holds for a collection $(x_i)_{i\in I}$, for some finite or countable index $I$ and where $x_{i} \in (0,1)$ for $i \in I$:
        \begin{align*}
        \left(\sum_{i \in I} x_i^{c'}\right)^{\frac{1}{c'}}\leq \left(\sum_{i \in I}x_i^{c}\right)^{\frac{1}{c}}\hspace{-0.625em}.
        \end{align*}
    Therefore,
    \begin{align*}
    \sum\limits_{i\in I_m}\left( \prod\limits_{j=1}^n \beta_{jj}^{q_{i,m}}\right)^{c'} 
    \leq \left(\sum\limits_{i\in I_m}\left( \prod\limits_{j=1}^n \beta_{jj}^{q_{i,m}}\right)^c\right)^{\frac{c'}{c}}
    \leq\left(\left(\alpha\prod_{j=1}^n\beta_{jj}^{m}\right)^c\right)^{\frac{c'}{c}}
    \leq \left(\alpha'\prod_{j=1}^n\beta_{jj}^{m}\right)^{c'}\hspace{-0.625em}.
    \end{align*}

    Note that $U_1=A(B[0,r])+b_1$ for some $b_1\in\R^n$ and $r>0$ by the rules of the game, and
    observe that $r\in[\rho_2',\rho_1']\subseteq [\rho_2,\rho_1]$, hence $U_1$ is a valid play by Player\;I in the $(\alpha,A,c,\rho_2,\rho_1)$-potential game and therefore, since the selection condition is satisfied, Player\;II can respond with $\mathcal{A}(U_1)$ using the winning strategy in the $(\alpha,A,c,\rho_2,\rho_1)$-potential game.

    By induction, given some play $U_m$ by Player\;I, Player\;II can legally respond using their winning strategy in the $(\alpha,A,c,\rho_2,\rho_1)$-potential game, since any $U_m$ can be viewed as a play in this game, and hence, $S$ is $(\alpha',A,c',\rho_2',\rho_1')$-winning.
\end{proof}

\begin{lem}[Countable intersections]\label{lem:countableintersections}
    Let $A$ be a diagonal $(n\!\times\!n)$-matrix with diagonal entries $\beta_{11},\dots, \beta_{nn}\in(0,1)$.
    Let $\{S_j\}_{j\in J}$ be an at most countable collection of sets such that for each $j\in J$, the set $S_j$ is a $(\alpha_j,A,c,\rho_2,\rho_1)$-winning set with $c>0$. If $\alpha = (\sum_{j\in J}(\alpha_j)^c)^{1/c}$ is finite, then $S= \cap_{j\in J} S_j$ is an $(\alpha,A,c,\rho_2,\rho_1)$-winning set.
\end{lem}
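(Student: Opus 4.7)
The plan is to follow the standard template for proving countable intersections of winning sets are winning in a potential-style game: have Player\,II play, against a single sequence of moves in the $(\alpha,A,c,\rho_2,\rho_1)$-game for $S$, the union of the responses dictated by their individual winning strategies for each $S_j$. Since Player\,I's moves are of exactly the same form as those allowed in each $(\alpha_j,A,c,\rho_2,\rho_1)$-game, any play $U_1,U_2,\ldots$ against $S$ is simultaneously a legal play against each $S_j$, and Player\,II's winning strategy for $S_j$ prescribes a collection $\mathcal{A}_j(U_m,\ldots,U_1)$ of pairs satisfying the $\alpha_j$-bound.

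Concretely, on the $m$-th turn I would let Player\,II choose
\begin{align*}
    \mathcal{A}(U_m,\ldots,U_1)=\bigcup_{j\in J}\mathcal{A}_j(U_m,\ldots,U_1).
\end{align*}
This is a countable union of at-most-countable sets, and hence at most countable, so the indexing requirement from \Cref{def:winning} is met. Summing the individual $\alpha_j$-bounds and using $\alpha^c=\sum_{j\in J}\alpha_j^c$, one obtains
\begin{align*}
    \sum_{(q,y)\in\mathcal{A}(U_m,\ldots,U_1)}\prod_{k=1}^n(\beta_{kk}^{q})^c
    \;\leq\;\sum_{j\in J}\Bigl(\alpha_j\prod_{k=1}^n\beta_{kk}^{m}\Bigr)^{c}
    \;=\;\Bigl(\alpha\prod_{k=1}^n\beta_{kk}^{m}\Bigr)^{c},
\end{align*}
so the collection is admissible in the $(\alpha,A,c,\rho_2,\rho_1)$-game.

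To finish, I would argue as follows: the deleted set in the combined game equals $\bigcup_{j\in J}\operatorname{Del}_j(\mathcal{A}_j(U_1),\mathcal{A}_j(U_1,U_2),\ldots)$, where $\operatorname{Del}_j$ denotes the deletion produced by the $j$-th winning strategy as in \eqref{eq:winning-condition-deletion}. Therefore, if the outcome $x_\infty$ avoids this total deletion, it avoids every $\operatorname{Del}_j$, and so by the winning property of each individual strategy, $x_\infty\in S_j$ for every $j\in J$. Consequently $x_\infty\in\bigcap_{j\in J}S_j=S$, establishing that $S$ is $(\alpha,A,c,\rho_2,\rho_1)$-winning.

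I do not anticipate any serious obstacle here; the only points requiring care are the countability of the combined collection (immediate since $J$ and each $I_{j,m}$ are at most countable) and the verification of the $c$-sum bound, which reduces to the identity defining $\alpha$. Finiteness of $\alpha$ is exactly the hypothesis of the lemma, ensuring the combined game has a well-defined parameter. The restriction $c>0$ in the statement is natural because in the $c=0$ case \Cref{def:winning} forces Player\,II to play only a single pair per turn, which is incompatible with taking unions of several strategies' responses.
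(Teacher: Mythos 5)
Your proposal is correct and follows essentially the same route as the paper's own proof: have Player\,II respond with the union of the collections prescribed by the individual winning strategies, verify the $c$-sum bound by summing the $\alpha_j^c$-bounds and invoking the identity $\alpha^c=\sum_{j\in J}\alpha_j^c$, and conclude by noting that avoiding the combined deletion forces $x_\infty$ into every $S_j$. Your remark explaining why $c>0$ is needed is a sensible addition that the paper leaves implicit.
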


\begin{proof} 
    Given $m\in\N$ and plays $U_m,\dots, U_1$ by Player\;I, for each $j\in J$ let
        \begin{align*}
        \mathcal{A}_{m,j}(U_m,\dots,U_1)= \left\{(q_{i,m,j},y_{i,m,j}): i\in I_{m,j}, q_{i,m,j}\geq 1 \; \text{and} \; y_{i,m,j}\in\R^n \right\}
        \end{align*}     
    denote the collection selected by Player\;II on the $m$-th turn in the strategy for $S_j$.
    For each $m\in\N$, set 
        \begin{align*}
        \mathcal{A}_m(U_m,\dots,U_1) = \bigcup_{j \in J} \mathcal{A}_{m,j}(U_m,\dots,U_1).
        \end{align*}
    We show, in the $(\alpha,A,c,\rho_2,\rho_1)$-potential game, by playing $\mathcal{A}_m(U_m,\dots,U_1)$ on their $m$-th turn, Player\;II can ensure, if $x_{\infty}\not\in \operatorname{Del}(\mathcal{A}(U_1),\mathcal{A}(U_1,U_2),\dots)$, then $x_{\infty}\in S$, namely $S$ is $(\alpha,A,c,\rho_2,\rho_1)$-winning.
    To this end, fix $m \in \N$, and observe that
    \begin{align*}
        \sum_{i\in I_m}\left(\prod_{k=1}^n\beta_{kk}^{q_{i,m}}\right)^c\leq \sum_{j\in J}\left( \sum_{i\in I_{m,j}}\left(\prod_{k=1}^n\beta_{kk}^{q_{i,m,j}}\right)^c \right)\leq \sum_{j\in J}\left(\alpha_j\prod_{k=1}^n\beta_{kk}^{m}\right)^c=\left(\alpha\prod_{k=1}^n\beta_{kk}^{m}\right)^c.
    \end{align*}
    Hence, each defined collection $\mathcal{A}(U_m,\dots,U_1)$ is a valid play for Player\;II.
    
    Assume that $x_{\infty}\not\in \operatorname{Del}(\mathcal{A}_1(U_1),\mathcal{A}_2(U_1,U_2),\dots)$ and hence $x_{\infty}\not\in\operatorname{Del}(\mathcal{A}_{1,j}(U_1),\mathcal{A}_{2,j}(U_1,U_2),\dots)$ for each $j\in J$.
    Note, if $x_{\infty}\not\in S$, then $x_{\infty}\not\in S_j$ for at least one $j\in J$, this would be a contradiction since Player\;II is playing according to their strategy.
    Hence $x_{\infty}\in S$ and therefore $S$ is $(\alpha,A,c,\rho_2,\rho_1)$-winning.
\end{proof}

This result does not automatically mean the intersection is non-empty, as for sufficiently large $\alpha$, one could make the empty set winning.
In \Cref{sec:winng-sets-HD}, we give conditions under which winning sets are non-empty, and in such cases, obtain a lower bound for their Hausdorff dimension.

\section{Winning sets of the matrix potential game}\label{sec:winning-sets}

In this section we present a general strategy for proving that a set is winning.
We then use this strategy to show that several broad classes of sets are winning. 
These include examples that do not fall into the scope of winning sets for previously considered topological and potential games.
Notably, several examples of self-affine sets fall into the families of sets we consider.

\subsection{A general strategy for closed sets}

Let $n \in \N$, $r > 0$, $F \subseteq B[0,r] \subset \R^{n}$ be a non-empty closed set, and $A$ be a diagonal $(n\!\times\!n)$-matrix with diagonal entries $\beta_{11},...,\beta_{nn} \in (0,1)$. Given a bounded sequence $(a_{n})_{n\in\N}$ of positive real numbers, $\rho_1\geq r$ and $c\in(0,1)$, we say that $\mathcal{F}= \{\mathcal{Q}_1,\mathcal{Q}_2,\dots\}$, with $\mathcal{Q}_{k}=\{(q_{i,k},y_{i,k}) : i\in I_k, \;q_{i,k}\geq 1 \; \text{and} \; y_{i,k}\in\R^n\}$, where $I_{k}$ is at most countable, is a strategy defining iterative covering (SDIC) of $B[0,r]\setminus F$, with respect to the tuple $(A, (a_{n})_{n\in \N}, \rho_{1}, c)$, if the following conditions are satisfied.
\begin{enumerate}[leftmargin=4.75em]
    \item[(SDIC~1)] The set $B[0,r]\setminus F$ is contained in
        \begin{align*}
        \bigcup_{k\in\N} \bigcup_{\substack{(q,y)\in\mathcal{Q}_{k}}}\left( A^{q}(B[0,r])+y \right).
        \end{align*}
    \item[(SDIC~2)]
    For all $z\in\R^n$ and $k\in \N$,
    \begin{align*}
        \sum_{\substack{(q,y)\in \mathcal{Q}_{k} \; \text{with}\\ (A^q(B[0,r])+y)\cap (A^k(B[0,\rho_1])+z)\neq \emptyset}} \left(\prod_{j=1}^n\beta_{jj}^q\right)^c\leq \left(a_k\prod_{j=1}^n \beta_{jj}^{k}\right)^c\hspace{-0.625em}.
    \end{align*}
\end{enumerate}

\begin{prop}\label{cutoutwin}
    Let $n \in \N$, let $r > 0$, and let $A$ be an $(n\!\times\!n)$-diagonal matrix with diagonal entries $\beta_{11},\dots,\beta_{nn}\in (0,1)$.
    If $F\subseteq B[0,r]$ is a non-empty closed set and $\mathcal{F}$ is a strategy defining iterative covering of $B[0,r]\setminus F$ with respect to the tuple $(A, (a_{n})_{n\in \N}, \rho_{1}, c)$, then $F\cup (\R^n\setminus B[0,r])$ is $(\alpha,A,c,r,\rho_1)$-winning, where $\alpha = \sup_{k\in\N}\{a_k\}$.
\end{prop}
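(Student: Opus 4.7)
The plan is to let the SDIC $\mathcal{F}$ itself dictate Player\;II's strategy. Write $s \in [r, \rho_1]$ for the radius that Player\;I selects on the first turn, so that every play of Player\;I takes the form $U_m = A^m(B[0,s]) + b_m$ for some $b_m \in \R^n$. On the $m$-th turn, Player\;II responds with the sub-collection
\begin{align*}
\mathcal{A}(U_m, \ldots, U_1) := \left\{(q, y) \in \mathcal{Q}_m \, : \, \left(A^q(B[0, r]) + y\right) \cap U_m \neq \emptyset\right\},
\end{align*}
namely, the tuples in the $m$-th layer of $\mathcal{F}$ whose associated sets meet Player\;I's current play. To verify legality, note that $U_m \subseteq A^m(B[0,\rho_1]) + b_m$, so any $(q,y)$ selected by Player\;II satisfies $(A^q(B[0,r])+y) \cap (A^m(B[0,\rho_1])+b_m) \neq \emptyset$. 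Applying (SDIC~2) with $z = b_m$ then gives
\begin{align*}
\sum_{(q,y) \in \mathcal{A}(U_m,\ldots,U_1)} \left(\prod_{j=1}^n \beta_{jj}^q\right)^c \leq \left(a_m \prod_{j=1}^n \beta_{jj}^m\right)^c \leq \left(\alpha \prod_{j=1}^n \beta_{jj}^m\right)^c,
\end{align*}
since $a_m \leq \sup_k a_k = \alpha$.

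Next I would check that this strategy is indeed winning. Assume $x_{\infty} \notin \operatorname{Del}(\mathcal{A}(U_1), \mathcal{A}(U_2, U_1), \ldots)$. If $x_\infty \notin B[0,r]$, then $x_\infty \in \R^n \setminus B[0,r] \subseteq F \cup (\R^n \setminus B[0,r])$ and there is nothing to prove, so suppose $x_\infty \in B[0,r]$. Aiming for a contradiction, suppose further that $x_\infty \notin F$, i.e.\ $x_\infty \in B[0,r] \setminus F$. By (SDIC~1), there exist $k \in \N$ and $(q,y) \in \mathcal{Q}_k$ such that $x_\infty \in A^q(B[0,r])+y$. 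Since $\{x_\infty\} = \cap_{m} U_m$, we have $x_\infty \in U_k$, and hence $(q,y) \in \mathcal{A}(U_k, \ldots, U_1)$. The inclusion $A^q(B[0,s])+y \supseteq A^q(B[0,r])+y$ (valid because $s \geq r$ and $A$ is diagonal with positive entries) now places $x_\infty$ in the deleted set $A^q(B[0,s])+y$, contradicting $x_\infty \notin \operatorname{Del}$. Hence $x_\infty \in F$, as required.

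The only genuine subtlety I anticipate is reconciling the two radii that appear in the problem: the SDIC covers $B[0,r] \setminus F$ by sets scaled using $r$, while Player\;II's deletions are automatically scaled using Player\;I's first-play radius $s$, which may be strictly larger. The inclusion argument above handles the covering direction, and the universal-in-$z$ formulation of (SDIC~2), combined with $U_m \subseteq A^m(B[0,\rho_1])+b_m$, handles the size direction. Player\;II's option to skip any turn absorbs the case where the selected sub-collection is empty, and the case $c=0$ does not arise because the SDIC is defined only for $c \in (0,1)$.
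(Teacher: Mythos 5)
Your proof is correct and follows essentially the same route as the paper: use the $m$-th layer $\mathcal{Q}_m$ of the SDIC to define Player\;II's response, restricted to tuples whose associated sets meet Player\;I's current play, and then derive a contradiction from (SDIC~1) when the outcome falls in $B[0,r]\setminus F$. You are in fact slightly more explicit than the paper at the two points where the radius $r$ from the SDIC and Player\;I's first-play radius $s\in[r,\rho_1]$ must be reconciled: the legality check via the inclusion $U_m\subseteq A^m(B[0,\rho_1])+b_m$, and the deletion step via $A^q(B[0,r])+y\subseteq A^q(B[0,s])+y$. The paper instead builds the second containment into its skip-if-illegal fallback and a terse invocation of (SDIC~2); your version shows that the proposed response is always legal, which makes the skip option unnecessary rather than a safety net.
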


\begin{proof}
    Let $m\in\N$ and let $U_m=A^m(B[0,l])+b_m$ denote the $m$-th play by Player\;I, where $l\in[r,\rho_1]$. Suppose that Player\;II responds with the collection
    \begin{align}\label{eq:Pro_4_1_eq_1}
        \mathcal{A}(U_m,\dots,U_1)= \{(q,y)\in Q_m:(A^q(B[0,r])+y)\cap U_m\neq \emptyset\},
    \end{align}
    if it is legal to do so. Otherwise Player\;II skips their turn.
    
    We now verify that this is in fact a winning strategy.
    Let $U_1,U_2,\dots$ denote the sequence of plays by Player\;I, where Player\;II responds according to the above strategy.
    This game has outcome $x_{\infty}$.
    Assume $x_{\infty}\not\in \operatorname{Del}(\mathcal{A}(U_1),\mathcal{A}(U_1,U_2),\dots)$ and suppose for a contradiction that $x_{\infty}\not\in F\cup (\R^n\setminus B[0,r])$.
    In which case, $x_{\infty}\in B[0,r]\setminus F$ and hence by (SDIC~1), we have that
        \begin{align*}
        x_{\infty}\in \bigcup_{k\in\N} \bigcup_{\substack{(q,y)\in\mathcal{Q}_{k}}}\left( A^{q}(B[0,r])+y \right).
        \end{align*}
    Therefore, there exists at least one $k^*\in\N$ and at least one $(p,w) \in\mathcal{Q}_{k^*}$ such that $x_{\infty}\in A^p(B[0,r])+w\subseteq A^p(B[0,l])+w$.
    Since, by definition, $x_{\infty}\in U_{k^*}=A^{k^*}(B[0,l])+b_{k^*}$, this implies that the tuple
    $(p,w)$ belongs to the set $\{(q,y)\in Q_{k^*}:(A^q(B[0,r])+y)\cap U_{k^*}\neq \emptyset\}.$
    By \eqref{eq:Pro_4_1_eq_1} and (SDIC~2), we have
    \begin{align*}
    \sum_{\substack{(q,y)\in \\\mathcal{A}(U_{k^*},\dots,U_1)}}\prod_{j=1}^n(\beta^{q}_{jj})^c
    =\sum_{\substack{(q,y)\in \mathcal{Q}_{k^*} \; \text{with}\\ (A^q(B[0,r])+y)\cap U_{k^*}\neq \emptyset}}\prod_{j=1}^n(\beta^{q}_{jj})^c
    \leq \left(a_{k^*}\prod_{j=1}^n\beta_{jj}^{k^*}\right)^c\leq \left(\alpha\prod_{j=1}^n\beta_{jj}^{k^*}\right)^c\hspace{-0.625em}.
    \end{align*}
    Therefore, $\mathcal{A}(U_{m^*},\dots,U_1)$ is a legal play of the game by Player\;II and, hence $x_{\infty}$ belongs to the deleted region $\operatorname{Del}(\mathcal{A}(U_1),\mathcal{A}(U_1,U_2),\dots)$, yielding the desired contradiction.
\end{proof}

\subsection{Examples of sets satisfying \texorpdfstring{\Cref{cutoutwin}}{Proposition~4.1}}

We now present some families of sets that satisfy the conditions of \Cref{cutoutwin}.
Given $U,V \in \N \setminus \{1\}$, we let $A_{U,V}$ denote the matrix
    \begin{align}\label{eq:AUV}
    A_{U,V}=\begin{pmatrix}
    U^{-1} & 0 \\
    0 & V^{-1}\end{pmatrix}.
    \end{align}
The first family of examples we consider is a class of cut-out sets.
Many sets in this family, such as the self-affine Sierpinski carpets, have zero thickness, therefore the techniques developed in \cite{falconer2022intersections-ThicknessBasedGame} may not be applied in their given form.

\begin{exmp}\label{exa:RCO}
    Given $U,V\in\N\setminus\{1\}$, $m \in \N$ and $t>0$, we let $\mathcal{RCO}(U,V,m,t)$ denote the family of subsets of $\R^2$ that can be constructed as follows.
    For each $k \in \N$, partition $B[0,1] = [-1,1]^{2}$ into rectangular regions of width $2U^{-k}$ and height $2V^{-k}$.
    From each rectangular region, remove $m$ rectangles of width $2U^{-(k+t)}$ and height $2V^{-(k+t)}$.
    Let $E_k$ be the collection of interiors of these rectangles, and set $E = \cup_{k \in \N} E_k$.
    The collection $\mathcal{RCO}(U,V,m,t)$ consists of all compact sets $F$ such that $F = B[0,1] \setminus E$, for a set $E$ constructed in this way.
    
    \begin{figure}[ht]
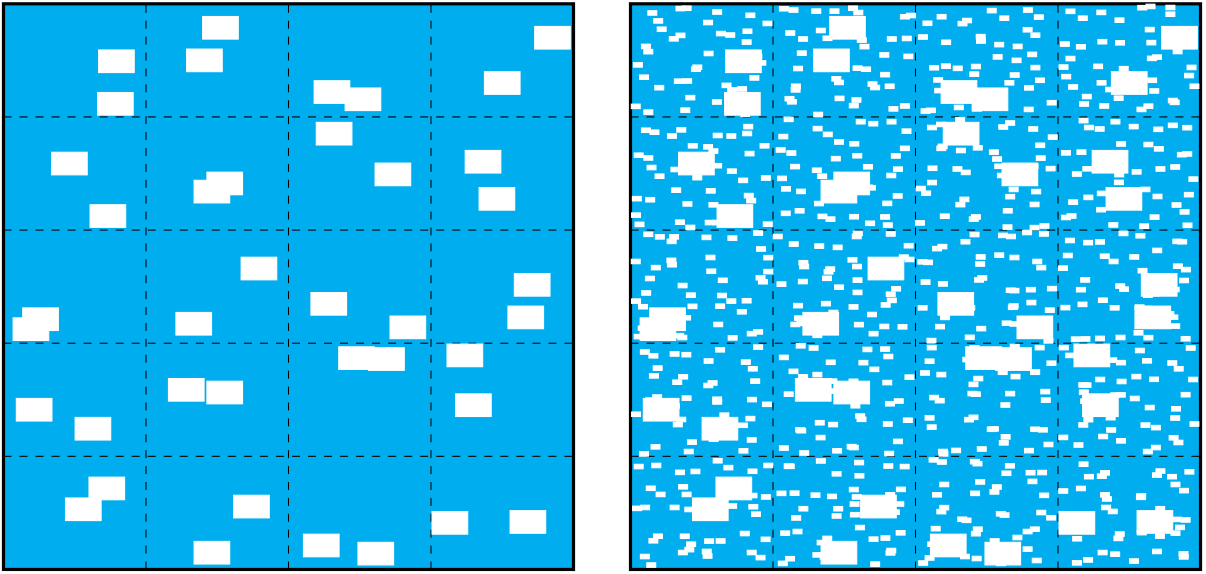

    \centering    \include{Pictures/RCO_4_5}
    \vspace{-1cm}
    \caption{The first two stages of construction ($F_{1}$ and $F_2$) of a set in $\mathcal{RCO}(4,5,2,1)$. The white portions are the rectangles in $E_1$ and $E_2$, respectively, that have been removed and
    the dashed lines represent the partition of $B[0,1] = [-1,1]^{2}$ used in the construction.}
    \label{Fig:Figure_Cut_Out_1}
    \end{figure}
    We highlight that overlapping cut-outs are permitted in our construction. For instance, this occurs in the example displayed in \Cref{Fig:Figure_Cut_Out_1}.
    \end{exmp}

\begin{prop}\label{prop:RCO-winning-cond}
    Let $U,V \in \N \setminus \{1\}$, $c \in (0,1)$, $m \in \N$ and $t>0$, and 
    \begin{align}\label{eq:RCO-alpha-value}
        \alpha (c) = (9m)^{1/c} (UV)^{-t} > 0.
    \end{align}
    For every $F \in \mathcal{RCO} (U,V,m,t)$, the set $F \cup (\R^2 \setminus B[0,1])$ is $(\alpha (c),A_{U,V},c,1,1)$-winning.
\end{prop}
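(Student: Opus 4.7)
The plan is to apply \Cref{cutoutwin} by constructing a strategy defining iterative covering (SDIC) of $B[0,1] \setminus F$ with respect to the tuple $(A_{U,V}, (a_k)_{k\in\N}, 1, c)$, where $a_k = (9m)^{1/c}(UV)^{-t}$ is constant in $k$. Since $\sup_{k\in\N} a_k = \alpha(c)$, the proposition will immediately yield that $F \cup (\R^2 \setminus B[0,1])$ is $(\alpha(c), A_{U,V}, c, 1, 1)$-winning.

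For each $k\in\N$, enumerate the rectangles comprising $E_k$ as $C_{k,1},\dots,C_{k,N_k}$ and let $y_{k,i}$ denote the centre of $C_{k,i}$. Since each cut-out has width $2U^{-(k+t)}$ and height $2V^{-(k+t)}$, the set $A_{U,V}^{k+t}(B[0,1]) + y_{k,i}$ coincides with the closure of $C_{k,i}$. Setting $\mathcal{Q}_k = \{(k+t, y_{k,i}) : 1\leq i \leq N_k\}$, condition (SDIC~1) is immediate, since $B[0,1]\setminus F = \bigcup_{k\in\N} E_k$ is covered by the closures of the $C_{k,i}$.

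The core step is verifying (SDIC~2). Fix $k\in\N$ and $z\in\R^2$. The query set $A_{U,V}^k(B[0,1]) + z$ is a closed axis-aligned rectangle of dimensions $2U^{-k} \times 2V^{-k}$, matching the shape of a stage-$k$ partition cell. In each coordinate, a closed interval of length $L$ meets at most three consecutive closed intervals of length $L$ in the uniform partition, so the query set intersects at most $3\times 3 = 9$ stage-$k$ partition cells. As each cell contains exactly $m$ cut-outs, all of which lie in that cell, the number $N$ of pairs $(q,y) \in \mathcal{Q}_k$ with $(A_{U,V}^q(B[0,1])+y) \cap (A_{U,V}^k(B[0,1])+z) \neq \emptyset$ satisfies $N \leq 9m$. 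Consequently,
\[
N \left(\prod_{j=1}^2 \beta_{jj}^{k+t}\right)^c \leq 9m\, (UV)^{-c(k+t)} = \bigl((9m)^{1/c}(UV)^{-t}\bigr)^c (UV)^{-ck} = \left(a_k\prod_{j=1}^2 \beta_{jj}^k\right)^c,
\]
establishing (SDIC~2).

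The only nontrivial step is the geometric counting: the constant $9$ (rather than the naive $4$) arises from allowing boundary-touching of closed rectangles in the uniform partition, together with the fact that $A_{U,V}^k(B[0,1]) + z$ is perfectly scaled to match a stage-$k$ partition cell. With the SDIC established, \Cref{cutoutwin} immediately yields the claim.
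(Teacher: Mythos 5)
Your proof is correct and follows essentially the same route as the paper: you take the cut-out rectangles at stage $k$ as the collection $\mathcal{Q}_k$, invoke \Cref{cutoutwin}, and verify (SDIC~2) via the bound of $9m$ on the number of cut-outs a query rectangle can meet. The only difference is that you spell out the geometric counting behind the constant $9$ (a closed interval of cell length meets at most three consecutive cells in each coordinate), which the paper states without detailed justification; this is a welcome clarification rather than a different approach.
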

\begin{proof}
    Let $F\in\mathcal{RCO}(U,V,m,t)$ and, for each $k \in \N$, let $E_k$ denote the cut-out sets in the construction of $F$, described in
    \Cref{exa:RCO}.
    For each $k \in \N$, let $\mathcal{Q}_k$
    denote the pairs of centre points and matrix powers defining the rectangles in the construction of $E_k$. 
    By construction, the collection $\mathcal{F}=\{\mathcal{Q}_1,\mathcal{Q}_2,\dots\}$ satisfies (SDIC~1) in the definition of a strategy defining iterative covering of $B[0,1]\setminus F$, with $A = A_{U,V}$.
    Hence, by \Cref{cutoutwin}, to show that $F \cup \R^{2}\setminus B[0,1]$ is winning, it remains to show (SDIC~2) holds.
    To this end, let $k \in \N$ be fixed, let $\rho_1=1$ and $c\in(0,1)$, and observe that, for $z\in\R^2$,
        \begin{align*}
        \#\{(q,y)\in\mathcal{Q}_k:(A_{U,V}^{k}(B[0,1])+z)\cap (A_{U,V}^{q}(B[0,1])+y)\neq\emptyset\}\leq 9m.
        \end{align*}
    Combining the above inequality with the fact that, if $(q,y)\in \mathcal{Q}_k$, then $q=k+t$, we have
    \begin{align*}
        \sum_{\substack{(q,y)\in \mathcal{Q}_{k}:\\(A^q(B[0,r])+y)\cap (A^k(B[0,\rho_1])+z)\neq \emptyset}} \left(\frac{1}{(UV)^q}\right)^c\leq 9m\left(\frac{1}{(UV)^k}\frac{1}{(UV)^t}\right)^c= \left(\frac{(9m)^{1/c}}{(UV)^t} \frac{1}{(UV)^k}\right)^c.
    \end{align*}
    Therefore, (SDIC~2) is satisfied with $a_k=(9m)^{1/c}/(UV)^t$ for all $k \in \N$, and so by \Cref{cutoutwin}, we conclude that the set $F\cup (\R^2 \setminus B[0,1])$ is $(\alpha(c),A_{U,V},c,1,1)$-winning.
\end{proof}

The second family of examples we consider is a class of Moran sets defined via affine transformations which, to the best of the authors' knowledge, do not fit into the current frameworks for topological and potential games.
This family includes examples with path connected complement and empty interior, so their thickness is zero.

\begin{exmp}\label{exa:RCD}
    Let $U,V\in\N\setminus\{1\}$ be given. 
    We define $\mathcal{RCD}(U,V)$ to be the collection of Moran sets $F = \cap_{k \in \N_0} F_k$, where $F_{0} =B[0,1]=[-1,1]^{2}$ and, for each $k \in \N$, $F_k$ consists of $(U-1)^k (V-1)^k$ rectangles, parallel to the (standard Euclidean) co-ordinate axes and with width $2U^{-k}$ and height $2V^{-k}$, defined inductively as follows.
    At the $k$-th stage of construction, each component rectangle of $F_k$ is split into $U-1$ equal sections along the $x$-axis and $V-1$ equal sections along the $y$-axis to create $(U-1)(V-1)$ smaller rectangular regions. 
    Each of these regions contains a component rectangle of $F_{k+1}$, positioned at one of the corners.
    The set $F_{k+1}$ is then defined to be the union of all such component rectangles.
    An illustration of the first two stages of construction ($F_{1}$ and $F_{2}$) of a set in $\mathcal{RCD} (7,4)$ is shown in \Cref{RCD}.

    \begin{figure}[ht]
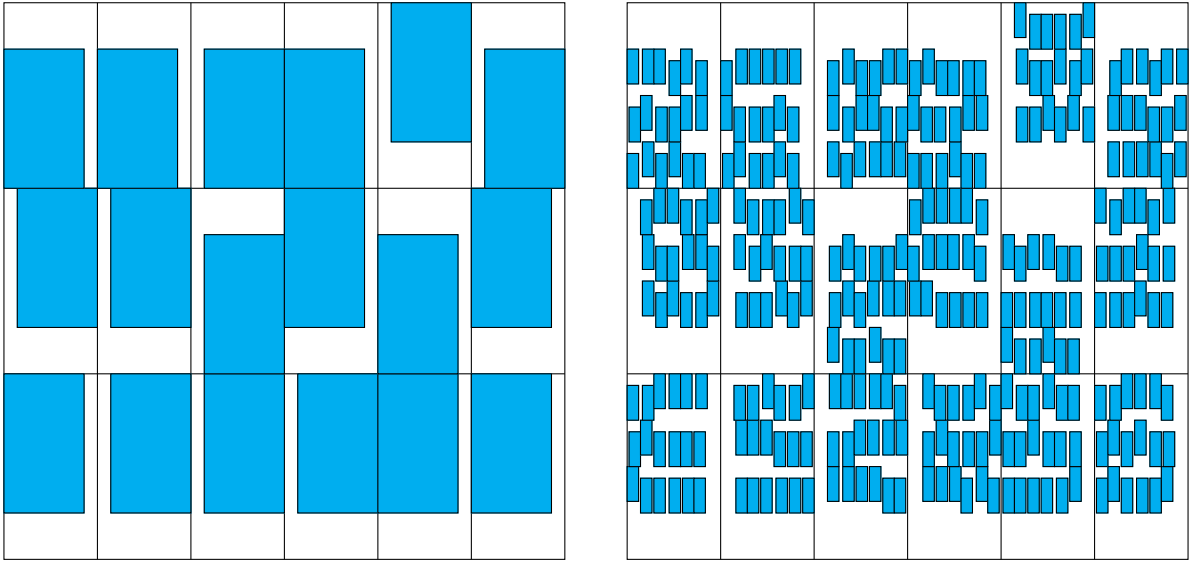

    \centering
    \include{Pictures/RCD_7_4}
    \vspace{-1cm}
    \caption{The first two stages of construction ($F_{1}$ and $F_{2}$) of a set in $\mathcal{RCD} (7,4)$.}
    \label{RCD}
    \end{figure}
    An alternative description for the sets in $\mathcal{RCD} (U,V)$ is as follows.  Define $ T_{\epsilon} = F_0 = B[0,1]$,
    for $k \in \N$, let $\Lambda_k$ denote the set $\{1,...,(U-1)(V-1)\}^k$, and let 
    $F_k = \cup_{\omega \in \Lambda_k} T_{\omega}$, where
    $T_{\omega} = A_{U,V}^{k}(B[0,1]
    ) + b_{\omega}$ for each $\omega \in \Lambda_k$, and where $b_{\omega}$ is defined inductively as follows.
    Set $b_{\epsilon} = (0,0)$ and
    suppose, for some $k\in\N_0$, that we have constructed $F_k = \cup_{\omega \in \Lambda_k} T_{\omega}$.
    For each $i\in\{1,...,(U-1)(V-1)\}$, there exists a unique $s \in \{1,\ldots,U-1\}$ and a unique $t \in \{1,\ldots,V-1\}$ such that $i=s+(t-1)(U-1)$.
    The vector $b_{\omega i}$ can be chosen to be any of the four vectors in the set
    \begin{align*}
        \bigg\{ b_{\omega}+ \bigg(\underbrace{\frac{2s-U}{U^{k}(U-1)},\frac{2t-V}{V^{k}(V-1)}}_{\mathfrak{u}_{s,k}}\bigg)+\bigg(\underbrace{\frac{(-1)^{q}}{U^{k+1}(U-1)},\frac{(-1)^{r}}{V^{k+1}(V-1)}}_{\mathfrak{v}_{q,r,k}}\bigg) \, : \, q, r \in \{0,1\}\bigg\}.
    \end{align*}
    Here, $b_{\omega}$ takes $A_{U,V}^{k+1}(B[0,1])$
    to the centre of $T_{\omega}$, the second vector $\mathfrak{u}_{s,k}$ takes $A_{U,V}^{k+1}(B[0,1]) + b_{\omega}$
    to the chosen component rectangle, and the vector $\mathfrak{v}_{q,r,k}$ takes $A_{U,V}^{k+1}(B[0,1]) + b_{\omega} + \mathfrak{u}_{s,k}$
    to the specific corner of the chosen component rectangle.
    \end{exmp}

    \begin{prop}\label{prop:RCD-winning-conditions}
        Let $U,V \in \N \setminus \{1\}$, $c \in (0,1)$ and $t>0$.
       If $F \in \mathcal{RCD} (U,V)$, then $F \cup (\R^2 \setminus B [0,1])$ is $(\alpha (c,t),A_{U,V},c,1,1)$-winning, where
        \begin{align}\label{eq:RCD-alpha-value}
            \alpha (c,t) = (9(U-1)(V-1)N_t)^{1/c} (UV)^{-(1+t)},
        \end{align}
        and
        \begin{align}\label{eq:Nt-value}
            N_{t}=\min\left\{\left\lceil \frac{U^{t}}{U-1} \right\rceil\left\lceil \frac{V^{t+1}}{V-1} \right\rceil+\left\lceil \frac{V^{t}}{V-1} \right\rceil\left\lceil U^{t} \right\rceil,\left\lceil \frac{U^{t+1}}{U-1} \right\rceil\left\lceil \frac{V^{t}}{V-1} \right\rceil+\left\lceil \frac{U^{t}}{U-1} \right\rceil\left\lceil V^{t} \right\rceil \right\}.
        \end{align}
    \end{prop}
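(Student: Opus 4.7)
The plan is to construct a strategy defining iterative covering (SDIC) of $B[0,1]\setminus F$ with respect to the tuple $(A_{U,V},(\alpha(c,t))_{n\in\N},1,c)$ and then invoke \Cref{cutoutwin}. This follows the same template as the proof of \Cref{prop:RCO-winning-cond}, with L-shaped regions playing the role of the rectangular cut-outs.

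For each $k\in\N$, the $k$-th step of the Moran construction partitions each of the $(U-1)^{k-1}(V-1)^{k-1}$ components of $F_{k-1}$ into $(U-1)(V-1)$ axis-parallel subregions, retaining a corner piece (contributing to $F_{k}$) and leaving an L-shaped complement. By the very definition of $N_{t}$ in \eqref{eq:Nt-value}, each such L-shape decomposes into two axis-parallel rectangular strips that can be covered jointly by $N_{t}$ rectangles of a size proportional to $A_{U,V}^{k+t}(B[0,1])$, the minimum in \eqref{eq:Nt-value} reflecting the better of the two L-strip decompositions. I would let $\mathcal{Q}_{k}$ consist of the tuples $(q,y)$ encoding the centre points and matrix powers of these covering rectangles.

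Property (SDIC~1) is then immediate from the identity $B[0,1]\setminus F = \bigcup_{k\ge 1}(F_{k-1}\setminus F_{k})$ together with the covering property of $\mathcal{Q}_{k}$. Property (SDIC~2) requires bounding, for each $k\in\N$ and $z\in\R^{2}$, the number of covering rectangles in $\mathcal{Q}_{k}$ that intersect $A_{U,V}^{k}(B[0,1])+z$. A grid-type counting argument, analogous to the one used in \Cref{prop:RCO-winning-cond}, shows that the ball meets at most nine components of the relevant Moran generation, each of which contributes at most $(U-1)(V-1)$ L-shapes with $N_{t}$ covering rectangles each; hence at most $9(U-1)(V-1)N_{t}$ tuples in $\mathcal{Q}_{k}$ contribute to the sum. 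The value $\alpha(c,t)$ in \eqref{eq:RCD-alpha-value} is calibrated so that this count yields exactly the inequality required by (SDIC~2), and \Cref{cutoutwin} then gives the claim.

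The principal obstacle is the combinatorial bookkeeping in (SDIC~2): one must simultaneously track the three relevant scales (the component scale $A_{U,V}^{k-1}$, the game-ball scale $A_{U,V}^{k}$, and the cover scale $A_{U,V}^{k+t}$), verify the $\min$ over the two L-strip decompositions that defines $N_{t}$, and control the intersection count despite the fact that the components of $F_{k-1}$ sit inside a nested Moran hierarchy rather than a regular grid in $B[0,1]$.
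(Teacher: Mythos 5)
Your plan is the same as the paper's: encode the L-shaped removed regions by tuples $(q,y)$, package them into an SDIC, and invoke \Cref{cutoutwin}. The substantive issue is that your indexing is internally inconsistent and, carried through literally, would not produce the stated parameter $\alpha(c,t)$. In your first paragraph you place the covers of the L-shapes inside the components of $F_{k-1}$ into $\mathcal{Q}_k$, at scale $k+t$. But in (SDIC~2) the tuples in $\mathcal{Q}_k$ are tested against a game ball at scale $k$, which is smaller than the $F_{k-1}$-components; that ball meets only a handful of them (not nine), and the resulting bound is of the form $a_k = C^{1/c}(UV)^{-t}$, whereas \eqref{eq:RCD-alpha-value} has the extra factor $(UV)^{-1}$. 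This $a_k$ is larger than $\alpha(c,t)$, and by \Cref{lem:monotonicity} monotonicity runs the wrong way, so winning at the larger parameter does not give winning at $\alpha(c,t)$.

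The fix, which is what the paper does, is to shift your indexing by one: for $\omega\in\Lambda_k$, cover $L_{\omega i}\setminus T_{\omega i}$ at scale $q_{\omega i}=k+1+t$ and place those tuples in $\mathcal{Q}_k$. A scale-$k$ ball then meets at most nine sets $T_\omega$ with $\omega\in\Lambda_k$, each contributing $(U-1)(V-1)N_t$ covering rectangles of weight $(UV)^{-(k+1+t)c}$, and the arithmetic gives $a_k=\alpha(c,t)$ on the nose. Your second paragraph (with the factors $9$ and $(U-1)(V-1)$) already implicitly uses this corrected convention, so the two paragraphs as written are not mutually consistent. The computation of $N_t$ itself is scale-shift-invariant, so that part of your argument is unaffected. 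Finally, the concern you raise about the $T_\omega$ not forming a regular grid is handled in the paper simply by noting they are pairwise disjoint scale-$k$ rectangles, so a scale-$k$ ball can meet at most a bounded number of them; no grid alignment is needed.
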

    
    \begin{proof}
    Let $F\in\mathcal{RCD}(U,V)$ and let $t>0$ be fixed.
    By definition, $F=\cap_{k\in\N_0}\cup_{\omega\in \Lambda_k} T_{\omega}$, where $T_{\omega}$ is as defined above.   
    We construct a SDIC $\mathcal{F}$ of $B[0,r]\setminus F$ with respect to some tuple $(A_{U,V},(a_k)_{k\in\N},\rho_1,c)$.
    To this end, let $k\in\N_0$ and $\omega\in\Lambda_k$.
    For each $i\in\{1,\dots,(U-1)(V-1)\}$, we have that $T_{\omega i}$ is contained in a region $L_{\omega i}$ of width $2 U^{-k} (U-1)^{-1}$ and height $2 V^{-k} (V-1)^{-1}$. 
    Setting $q_{\omega i}=k+1+t$, we claim that one can cover $L_{\omega i}\setminus T_{\omega i}$, regardless of the four possible positions of $T_{\omega i}$ in $L_{\omega i}$, with $N_t$ sets of the form $A_{U,V}^{q_{\omega i}}(B[0,1])+y$.
    To verify this, observe that $L_{\omega i}\setminus T_{\omega i}$ can be partitioned into two regions $M_1$ and $M_2$, with $M_1$ having width $2(U^{k+1}(U-1))^{-1}$ and height either $2V^{-k+1}$ or $2(V^k(V-1))^{-1}$ and $M_2$ having height $2(V^{k+1}(V-1))^{-1}$ and width either $2U^{-k+1}$ or $2(U^k(U-1))^{-1}$, see \Cref{RCDreg}.

    \begin{figure}[ht]\label{PIC:Config}
    \centering
    \begin{tikzpicture}[x=0.45cm,y=0.4cm]
\draw[thick] (0,0) rectangle (10,10.5);
\draw[thick] (14,0) rectangle (24,10.5);
\draw[thick] (0,13) rectangle (10,23.5);
\draw[thick] (14,13) rectangle (24,23.5);

\draw[fill,cyan] (0,0) rectangle (7.5,8);
\draw[fill,cyan] (16.5,0) rectangle (24,8);
\draw[fill,cyan] (0,15.5) rectangle (7.5,23.5);
\draw[fill,cyan] (16.5,15.5) rectangle (24,23.5);
\draw[very thick] (0,0) rectangle (7.5,8);
\draw[very thick] (16.5,0) rectangle (24,8);
\draw[very thick] (0,15.5) rectangle (7.5,23.5);
\draw[very thick] (16.5,15.5) rectangle (24,23.5);

\draw[dashed] (7.5,8) -- (10,8);
\draw[dashed] (16.5,8) -- (16.5,10.5);
\draw[dashed] (14,15.5) -- (16.5,15.5);
\draw[dashed] (7.5,13) -- (7.5,15.5);
\draw (8.75,4.75) node {$M_1$};
\draw (8.75,18) node {$M_1$};
\draw (15.25,4.75) node {$M_1$};
\draw (15.25,18) node {$M_1$};
\draw (5,8.75) node {$M_2$};
\draw (5,14) node {$M_2$};
\draw (19,8.75) node {$M_2$};
\draw (19,14) node {$M_2$};

\draw [<->] (0,-0.75) -- (7.5,-0.75);
\draw (3.75,-1.75) node {$\frac{2}{U^{k+1}}$};
\draw[<->] (7.5,-0.75) -- (10,-0.75);
\draw (9,-1.75) node {$\frac{2}{U^{k+1}(U-1)}$};
\draw[<->] (0,10.75) -- (10,10.75);
\draw (5,11.75) node {$\frac{2}{U^{k}(U-1)}$};
\draw [<->] (-0.5,0) -- (-0.5,8);
\draw (-2.75,4) node {$\frac{2}{V^{k+1}}$};
\draw [<->] (-0.5,8) -- (-0.5,10.5);
\draw (-3.5,9) node {$\frac{2}{V^{k+1}(V-1)}$};
\draw[<->] (10.5,0) -- (10.5,10.5);
\draw (12,5) node {$\frac{2}{V^{k}(V-1)}$};

\draw [<->] (16.5,-0.75) -- (24,-0.75);
\draw (21.25,-1.75) node {$\frac{2}{U^{k+1}}$};
\draw[<->] (14,-0.75) -- (16.5,-0.75);
\draw (15.5,-1.75) node {$\frac{2}{U^{k+1}(U-1)}$};
\draw[<->] (14,10.75) -- (24,10.75);
\draw (19,11.75) node {$\frac{2}{U^{k}(U-1)}$};
\draw [<->] (24.5,0) -- (24.5,8);
\draw (26.75,4) node {$\frac{2}{V^{k+1}}$};
\draw [<->] (24.5,8) -- (24.5,10.5);
\draw (27.5,9) node {$\frac{2}{V^{k+1}(V-1)}$};
\draw[<->] (13.5,0) -- (13.5,10.5);

\draw [<->] (0,24.25) -- (7.5,24.25);
\draw (3.75,25.25) node {$\frac{2}{U^{k+1}}$};
\draw[<->] (7.5,24.25) -- (10,24.25);
\draw (9,25.25) node {$\frac{2}{U^{k+1}(U-1)}$};
\draw[<->] (0,12.75) -- (10,12.75);
\draw [<->] (-0.5,15.5) -- (-0.5,23.5);
\draw (-2.75,19.5) node {$\frac{2}{V^{k+1}}$};
\draw [<->] (-0.5,13) -- (-0.5,15.5);
\draw (-3.5,14.5) node {$\frac{2}{V^{k+1}(V-1)}$};
\draw[<->] (10.5,13) -- (10.5,23.5);
\draw (12,18.5) node {$\frac{2}{V^{k}(V-1)}$};

\draw [<->] (16.5,24.25) -- (24,24.25);
\draw (21.25,25.25) node {$\frac{2}{U^{k+1}}$};
\draw[<->] (14,24.25) -- (16.5,24.25);
\draw (15.5,25.25) node {$\frac{2}{U^{k+1}(U-1)}$};
\draw[<->] (14,12.75) -- (24,12.75);
\draw [<->] (24.5,15.5) -- (24.5,23.5);
\draw (26.75,19.5) node {$\frac{2}{V^{k+1}}$};
\draw [<->] (24.5,13) -- (24.5,15.5);
\draw (27.5,14.5) node {$\frac{2}{V^{k+1}(V-1)}$};
\draw[<->] (13.5,13) -- (13.5,23.5);


\end{tikzpicture}
    \vspace{-1cm}
    \caption{The four possible configurations in a region of $\mathcal{RCD} (U,V)$.}
    \label{RCDreg}
    \end{figure}
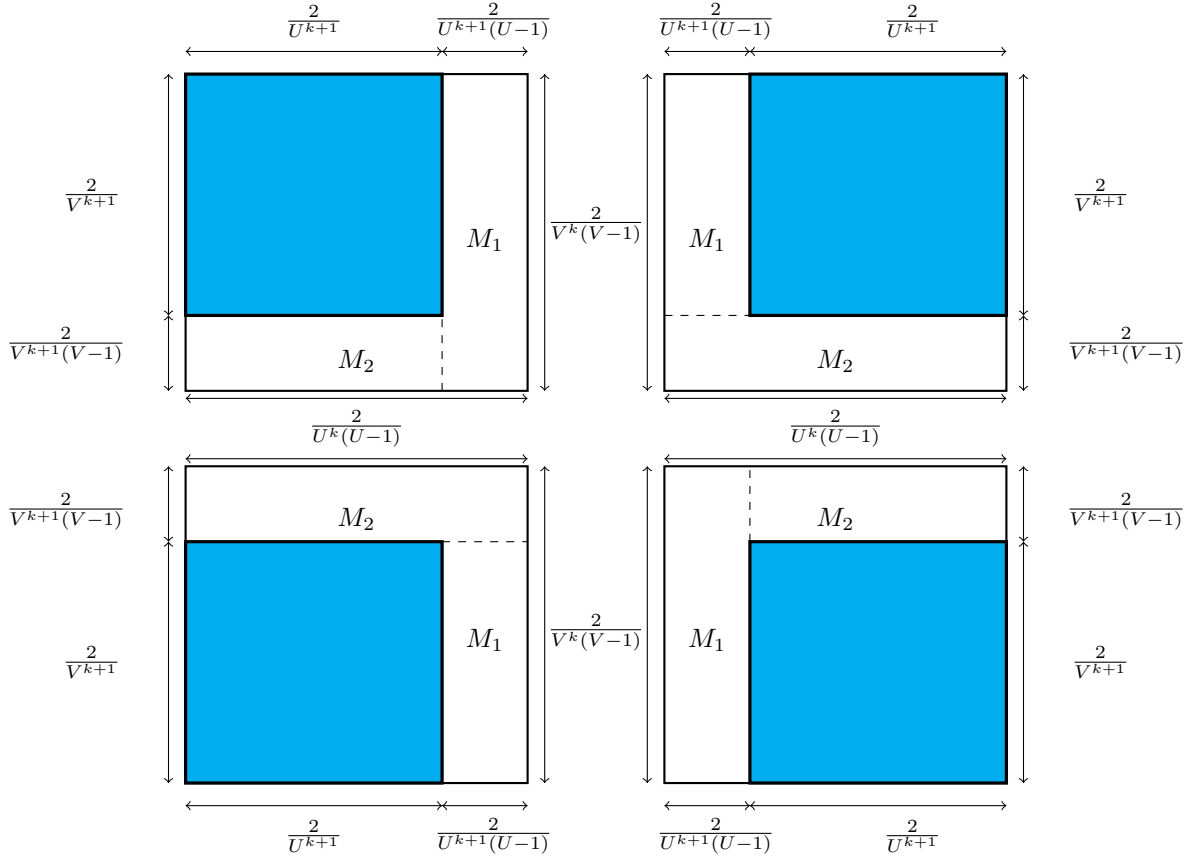

    To calculate how many such sets are needed to cover the regions $M_1$ and $M_2$ in total, we divide the respective side lengths of $M_{1}$ and $M_{2}$ by those of the rectangle $A_{U,V}^{q_{\omega i}}(B[0,1])=A_{U,V}^{k+1+t}(B[0,1])$ and sum them.
    Taking the minimum of the values for the two possible choices of $M_1$ and $M_2$ gives the number $N_t$ displayed in \eqref{eq:Nt-value}.
    Since $\omega\in\Lambda_k$ and $i\in \Lambda_{1}$ were chosen arbitrarily, we can repeat this process for each $\omega \in \Lambda_{k}$ and $i \in \Lambda_{1}$. 

    Setting $\mathcal{Q}_{\omega i}$ to be the set of tuples whose first co-ordinate is $q_{\omega i}$ and whose second co-ordinate is the centre of one of our covering sets for $L_{\omega i}\setminus T_{\omega i}$, we claim that $\mathcal{F}=\{\mathcal{Q}_1,\mathcal{Q}_2,\dots\}$, where $\mathcal{Q}_k=\cup_{\omega\in \Lambda_{k}}\cup_{i \in \Lambda_{1}} Q_{\omega i}$, is an SDIC of $B[0,1] \setminus F$.
    The condition (SDIC~1) is immediate from the construction, so it suffices to verify (SDIC~2).
    Let $k\in\N$ and $z\in\R^2$ be given. 
    Observe that $A_{U,V}^k(B[0,1])+z$ can intersect at most nine sets $T_{\omega}$ and for each $i\in \{1,\dots, (U-1)(V-1)\}$, that $L_{\omega i}\setminus T_{\omega i}$ can be covered by $N_{t}$ sets of the form $A_{U,V}^{q_{\omega i}}(B[0,1])$; hence,
        \begin{align*}
        \#\{ (q,y)\in \mathcal{Q}_{k} :(A_{U,V}^q(B[0,r])+y)\cap (A_{U,V}^k(B[0,1])+z)\neq \emptyset\}\leq 9(U-1)(V-1)N_{t}.
        \end{align*}
    Therefore, given $c\in (0,1)$ and setting $\rho_1=1$, we have
    \begin{align*}
        \sum_{\substack{(q,y)\in \mathcal{Q}_{k} :\\(A_{U,V}^q(B[0,r])+y)\cap (A_{U,V}^k(B[0,1])+z)\neq \emptyset}} \left(\prod_{j=1}^2\beta_{jj}^q\right)^c
        \leq \left(\left(9(U-1)(V-1)N_{t}\right)^{1/c}(UV)^{-(1+t)}\right)^c \left((UV)^{-k}\right)^c.
    \end{align*}    
    Hence (SDIC~2) is satisfied with $a_k=(9(U-1)(V-1)N_{t})^{1/c}(UV)^{-(1+t)}$, for all $k \in \N$, and $\rho_1=1$.
    Therefore, it follows by \Cref{cutoutwin} that
    the set $F\cup(\R^2\setminus B[0,1])$ is $(\alpha(c,t),A_{U,V},c,1,1)$-winning.
    \end{proof}

\section{Hausdorff dimension of winning sets}\label{sec:winng-sets-HD}

Here we present our main result, \Cref{thm:winning-set-dimension}, 
in which we provide lower bounds on the Hausdorff dimension of winning sets. We apply these bounds, in \Cref{sec:applications}, to prove that the intersection of certain classes of self-affine sets have strictly positive Hausdorff dimension. 

\begin{defin}
    Let $A$ be a diagonal ($n\!\times\!n$)-matrix with entries $\beta_{11},\dots, \beta_{nn} \in (0,1)$,
    $\rho_2>0$, $\alpha>0$ and $c\in[ 0,1)$. We define $\mathcal{S}(\alpha,A,c,\rho_2)$ to be the family of sets that are
    $(\alpha,A,c,\rho_2,\rho_1)$-winning
    for some $\rho_1\geq \rho_2$.
\end{defin}

\begin{thrm}\label{thm:winning-set-dimension}
    Given $n\in \N$, let $A$ be a diagonal $(n\!\times\!n)$-matrix with diagonal entries $\beta_{11},\dots,\beta_{nn} \in (0, 1/5)$, and let $\alpha, c\in (0,1)$ and $\rho_2>0$.
    If there exists a $\delta\in(0,1)$ with
    \begin{align}\label{eq:Thm-lower-bound-ass-1}
            \alpha^c\leq \delta^2\left(1-\left(\prod_{j=1}^n\beta_{jj}\right)^{1-c}\right) \quad \text{and} \quad 
            3^{-n}\prod\limits_{j=1}^n\left( 1-5\beta_{jj}^{\lfloor\delta\alpha^{-1}\rfloor}\right)>8^n (1+2^{2n+1})\delta,
    \end{align}
    then, for all $S\in \mathcal{S}(\alpha,A,c,\rho_2)$ and $y\in\R^n$, the intersection $S\cap \big(A(B[0,\rho_2])+y\big)$ is non-empty, and
    \begin{align*}
        \dim_{H}\left(S\cap \left(A(B[0,\rho_2])+y\right)\right)\geq\max\left\{ n-K_1\frac{\alpha}{|\log (\beta_{\max})|},0\right\},
    \end{align*}
    where $\beta_{\max} = \max_{j \in \{1,\ldots,n\}} \beta_{jj}$ and 
    \begin{align}\label{eq:main_result_constant_K}
        K_1=2\delta^{-1}\left\lvert\log\left( 3^{-n}\prod\limits_{j=1}^n\left( 1-5\beta_{jj}^{\lfloor\delta\alpha^{-1}\rfloor}\right)-8^n (1+2^{2n+1})\delta\right)\right\rvert.
    \end{align}
\end{thrm}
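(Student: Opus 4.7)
The plan is to realize the conclusion through a Moran-style construction inside $A(B[0,\rho_2])+y$. Fix a winning strategy for Player\,II and set $t:=\lfloor\delta\alpha^{-1}\rfloor$. I will build a rooted tree $\{\mathcal{T}_k\}_{k\in\mathbb{N}_0}$ whose level-$k$ vertices are essentially disjoint closed boxes of the form $A^{1+kt}(B[0,\rho_2])+b$, each contained in its parent in $\mathcal{T}_{k-1}$, in such a way that every infinite path through the tree encodes a legal Player\,I play-sequence (shrinking centrally by $A$ between branching events and choosing among the available children at each branching). The Moran set $E:=\bigcap_k\bigcup_{B\in\mathcal{T}_k}B$ will then be non-empty by compactness, and by construction every $x\in E$ arises as the outcome of a legal game against Player\,II's winning strategy in which Player\,II's deletions avoid $x$, so $x\in S\cap(A(B[0,\rho_2])+y)$; this gives the non-emptiness.

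The combinatorial core is the branching count at each level. An elementary packing estimate produces at least $3^{-n}\prod_j(1-5\beta_{jj}^t)\prod_j\beta_{jj}^{-t}$ essentially disjoint candidate children of scale $A^{1+kt}$ inside any parent box, the factor $(1-5\beta_{jj}^t)$ accounting for spacing and containment losses at the boundary. I then bound the number of candidates Player\,II can destroy across the $t$ turns of the level: a single deleted set $A^q(B[0,\rho_2])+z$ can intersect at most $8^n(1+2^{2n+1})$ candidates uniformly in $q$ and $z$, and summing this against the per-turn $c$-mass bound $\sum_i(\prod_j\beta_{jj}^{q_{i,m}})^c\leq(\alpha\prod_j\beta_{jj}^m)^c$ across the relevant turns, viewed as a geometric series controlled by the first inequality in~\eqref{eq:Thm-lower-bound-ass-1}, yields a destruction count of at most $8^n(1+2^{2n+1})\delta\prod_j\beta_{jj}^{-t}$. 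Subtracting gives at least $N\prod_j\beta_{jj}^{-t}$ surviving children, where $N:=3^{-n}\prod_j(1-5\beta_{jj}^t)-8^n(1+2^{2n+1})\delta>0$ by the second inequality of~\eqref{eq:Thm-lower-bound-ass-1}.

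For the Hausdorff dimension, I equip $E$ with the uniform Bernoulli measure $\mu$ assigning equal mass to each sibling subtree and apply the standard Moran-set mass distribution estimate (cf.~\cite{FractalGeometry}), which yields $\mu(B[x,r])\lesssim r^s$ for $s=\log(N\prod_j\beta_{jj}^{-t})/(t|\log\beta_{\max}|)$. Substituting the definition of $N$ and using $t\geq\delta/(2\alpha)$ rewrites this bound as $s\geq n-K_1\alpha/|\log\beta_{\max}|$ with $K_1$ as in~\eqref{eq:main_result_constant_K}, completing the proof.

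The main obstacle will be the bookkeeping in the combinatorial step: one must control how many anisotropic deletions $A^q(B[0,\rho_2])+z$ of each scale $q$ can intersect the candidate children, and convert the per-turn $c$-mass constraints into an honest integer upper bound on the number of candidates lost across the entire level. Balancing the packing count $3^{-n}\prod_j(1-5\beta_{jj}^t)$ against the deletion count $8^n(1+2^{2n+1})\delta$ is precisely what the two conditions in~\eqref{eq:Thm-lower-bound-ass-1} are calibrated to deliver.
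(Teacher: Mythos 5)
Your high-level plan coincides with the paper's: construct a Moran tree inside $A(B[0,\rho_2])+y$ whose branches encode legal Player\,I play-sequences, show a branching lower bound at every $N=\lfloor\delta\alpha^{-1}\rfloor$-th step, and apply a mass-distribution estimate. However, two substantive points in your combinatorial core are not correct as stated, and they are exactly where the paper's argument does real work.

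First, your claim that a single deleted set $A^q(B[0,\rho_2])+z$ intersects at most $8^n(1+2^{2n+1})$ candidates \emph{uniformly in $q$} is false. The candidates at the relevant level have side lengths $\asymp\beta_{jj}^{(k+1)N+1}$, so a deleted set with $q$ small (say $q=1$) has side lengths $\asymp\beta_{jj}$ and intersects on the order of $\prod_j\beta_{jj}^{1-(k+1)N-1}$ candidates, which is unbounded in $k$. The paper's analogous bound (Claim~3 in the appendix) is $q$-dependent, $n(T'',q,y)\lesssim 4^n\prod_j(\beta_{jj}^{q}+\beta_{jj}^{(k+1)N+1})/\prod_j\beta_{jj}^{(k+1)N+1}$, and the constant $8^n(1+2^{2n+1})$ arises only after combining this with the $c$-mass constraint via the inequality~\eqref{magicinequal} and summing a geometric series --- it is not a uniform intersection count.

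Second, and more fundamentally, the destruction bound cannot be computed turn-by-turn within a single branching epoch. Player\,II's deletions from \emph{earlier} epochs are also capable of eliminating candidate children at the present epoch (a deletion made at turn $m$ with $q$ very large may only ``land'' at a much later scale). To control this accumulated damage, the paper introduces the potential $\phi_m(U_m)$ summing the $c$-mass of all deletions (across all past turns) that still touch $U_m$, defines the ``good'' sub-collection $\mathcal{D}'_l$ of boxes with $\phi_l\leq(\delta\prod_j\beta_{jj}^l)^c$, and proves by induction (Claim~B with sub-Claims~3--4) that a good parent has enough good children. The inductive hypothesis that the parent lies in $\mathcal{D}'_{kN+1}$ is precisely what lets you bound the contribution from turns $1,\dots,kN$ in Claim~4(b); without such a bookkeeping device, the per-level destruction count you propose does not control the total number of children lost, and the subtraction $3^{-n}\prod_j(1-5\beta_{jj}^N)-8^n(1+2^{2n+1})\delta$ is not justified. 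You flag this as a ``bookkeeping'' obstacle, but it is the central idea of the proof rather than a routine detail.

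Once those two gaps are repaired --- a $q$-dependent intersection estimate converted via the $c$-mass budget and the inequality $\min\{1,(x/\gamma y)^c\}(ax+by)\leq(a+b)x^c\max\{x^{1-c},y^{1-c}/\gamma^c\}$, together with the cumulative-potential induction --- the Moran-tree structure, the surviving-count $N\prod_j\beta_{jj}^{-t}$, and the dimension calculation $s=\log(N\prod_j\beta_{jj}^{-t})/(t|\log\beta_{\max}|)$ with $t\geq\delta/(2\alpha)$ all line up with the paper's argument (their Appendix~\Cref{Hausdorff} and the cited \cite[Lemma~2.8]{MR2581371}).
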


While the conditions of \Cref{thm:winning-set-dimension} may appear cumbersome to verify, we highlight that there exists a $\delta$ satisfying \eqref{eq:Thm-lower-bound-ass-1} if
   \begin{align*}
       \alpha^c\leq \left(\left(2 \cdot 8^{n} (1+2^{2n+1})\right)^{-1}3^{-n}\prod\limits_{j=1}^n\left( 1-5\beta_{jj}\right)\right)^2\left(1-\left(\prod_{j=1}^n\beta_{jj}\right)^{1-c}\right).
   \end{align*}
Specifically $\delta = (2 \cdot 8^{n} (1+2^{2n+1}))^{-1}3^{-n}\prod_{j=1}^n( 1-5\beta_{jj})$. However, as this $\delta$ may not be optimal, this may yield a weaker bound for the Hausdorff dimension.

\begin{cor}\label{cor:positive-HD}
    Assume the setting of \Cref{thm:winning-set-dimension} and that $\alpha < K_1^{-1} n \log \beta_{\max}^{-1}$.
    For all $y\in\R^n$, 
    \begin{align*}
    \dim_{H} \left(S\cap \left(A(B[0,\rho_2])+y\right) \right)>0.
    \end{align*}
\end{cor}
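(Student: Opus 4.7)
The plan is to obtain this as an immediate consequence of \Cref{thm:winning-set-dimension}. Since the hypotheses of \Cref{thm:winning-set-dimension} are assumed, we already have at our disposal the lower bound
\begin{align*}
    \dim_H\bigl(S\cap(A(B[0,\rho_2])+y)\bigr)\geq \max\left\{n-K_1\frac{\alpha}{|\log(\beta_{\max})|},\,0\right\}
\end{align*}
for every $y\in\R^n$. Thus the only task is to verify that the extra hypothesis $\alpha < K_1^{-1}\,n\log\beta_{\max}^{-1}$ forces the expression $n-K_1\alpha/|\log\beta_{\max}|$ to be strictly positive.

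First I would record that, because $\beta_{\max}\in(0,1/5)\subset(0,1)$, one has $|\log\beta_{\max}|=\log\beta_{\max}^{-1}>0$, so the denominator and the constant $K_1$ (which is positive by its definition in \eqref{eq:main_result_constant_K}) make the inequality meaningful and its direction preserved under rearrangement. Multiplying the standing hypothesis $\alpha < K_1^{-1}\,n\log\beta_{\max}^{-1}$ by $K_1/|\log\beta_{\max}|>0$ yields
\begin{align*}
    K_1\frac{\alpha}{|\log\beta_{\max}|} < n,
\end{align*}
and hence $n - K_1\alpha/|\log\beta_{\max}|>0$. Combining this with the bound from \Cref{thm:winning-set-dimension} delivers the claimed strict positivity of the Hausdorff dimension.

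There is no genuine obstacle in this argument; the only point to be careful about is ensuring that \Cref{thm:winning-set-dimension} is genuinely applicable, i.e.\ that a $\delta\in(0,1)$ verifying \eqref{eq:Thm-lower-bound-ass-1} exists so that $K_1$ is well defined. Since this is part of the standing assumption ``setting of \Cref{thm:winning-set-dimension}'', no further work is required, and the corollary follows directly.
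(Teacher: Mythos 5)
Your argument is correct and is precisely the intended (and essentially only) proof: the paper offers no explicit proof of \Cref{cor:positive-HD} because, exactly as you observe, it follows immediately by combining the lower bound of \Cref{thm:winning-set-dimension} with the rearranged hypothesis $\alpha < K_1^{-1} n \log \beta_{\max}^{-1}$, which forces $n - K_1\alpha/|\log\beta_{\max}| > 0$.
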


\begin{proof}[Proof of \Cref{thm:winning-set-dimension}]
    For ease of notation, we write $\rho$ in place of $\rho_2$. By our hypothesis, for some $\rho_1\geq \rho$, we have $S$ is $(\alpha,A,c,\rho,\rho_1)$-winning.
    By the rules of the game, Player\;I's first play has the form $U_1=A(B[0,\rho])+y$, for some $y\in\R^n$.
    For $k\in\N$, we define 
        \begin{alignat*}{3}
        &E_k = \frac{\rho}{2} A^{k} \left( \Z^n \right)+y, \quad &&  \quad &&\mathcal{E}_k= \{ A^{k}(B[0,\rho])+z:z\in E_k \},\\
        &D_k =  3\rho A^{k} \left(\Z^n\right)+y, \quad &&  \quad &&\mathcal{D}_k= \{ A^{k}(B[0,\rho])+z':z'\in D_k \}\subseteq \mathcal{E}_k,
        \end{alignat*}
    and set $N= \left\lfloor \delta\alpha^{-1} \right\rfloor$;
    note, $N \geq 1$ 
    by \eqref{eq:Thm-lower-bound-ass-1}.
    For $k \in \N$, we define $\pi_k:\mathcal{E}_{k+1}\rightarrow \mathcal{E}_k$ as follows:
    \begin{itemize}
        \item If $k\not\equiv 1 \mod N$, we set $\pi_k(A^{k+1}\left(B[0,\rho]\right)+z)$ to be the element of $\mathcal{E}_k$ containing $A^{k+1}\left(B[0,\rho]\right)+z$ such that the distance between $z$ and the centre point of $\pi_k(A^{k+1}\left(B[0,\rho]\right)+z)$ is minimised;
        if there exists more than one possible set, we make an arbitrary choice. Note that such an element always exists, since $\mathcal{E}_k$ is a covering of $\R^{n}$.
        \item If $k \equiv 1 \mod N$, we define $\pi_k(A^{k+1}\left(B[0,\rho]\right)+z)$ as the element of $\mathcal{D}_k$ containing $A^{k+1}\left(B[0,\rho]\right)+z$ if such an element exists;
        otherwise it is defined as above.
    \end{itemize}
    Given $k,m \in \N$ and $T \in \mathcal{E}_{k+1}$, we write $\pi_{k+1,m} (T) = \pi_m\circ\cdots\circ\pi_{k-1}\circ\pi_k(T)\in\mathcal{E}_m$ if $m\leq k$, and $\pi_{k+1,m} (T) = T$ otherwise.   
  
    \begin{claimA}\label{claim:claim_A}
        If $T \in \mathcal{D}_{kN+1}$ and $T' \in \mathcal{D}_{(k+1)N+1}$, for some $k \in \N_0$, and $z = (z_{1}, \dots, z_{n}) \in D_{kN+1}$ is such that \mbox{$T=A^{kN+1} (B[0,\rho])+z$} and $T' \subseteq 2^{-1} A^{kN+1} (B[0,\rho]) + z$, then $\pi_{(k+1)N+1,kN+1} (T')=T$.
    \end{claimA}

    \begin{proof}[Proof of \Cref{claim:claim_A}]
        \textls[-5]{By definition, $T' \subseteq \pi_{(k+1)N+1,kN+2} (T')$ and there exists $\tilde{z} = (\tilde{z}_{1},\dots, \tilde{z}_{n}) \in E_{kN+2}$ with}
        \begin{align*}
        \pi_{(k+1)N+1,kN+2} (T') = A^{kN+2} (B [0,\rho]) + \tilde{z}.
        \end{align*}
        Hence, $(A^{kN+2} (B [0,\rho]) + \tilde{z}) \cap 2^{-1} A^{kN+1} (B[0,\rho] + z) \neq \emptyset$, and so $\lvert z_{j}-\tilde{z}_{j}\rvert\leq 2^{-1} \rho\beta_{jj}^{kN+1}+\rho\beta^{kN+2}_{jj}$ for all $j \in \{1,\ldots,n\}$.
        Therefore, for all $x\in \pi_{(k+1)N+1,kN+2}(T')$, 
        \begin{align*}
            \lvert x_{j}-z_{j} \rvert
            \leq \lvert x_{j}-\tilde{z}_{j} \rvert + \lvert z_{j}-\tilde{z}_{j} \rvert
            \leq \rho\beta_{jj}^{kN+2} + (2^{-1} \rho \beta_{jj}^{kN+1}+\rho\beta^{kN+2}_{jj})
            \leq \rho\beta_{jj}^{kN+1}.
        \end{align*}
        Thus, $x \in T$ and so we have that $\pi_{(k+1)N+1,kN+2}(T')\subseteq T$.
        From this, we deduce that
        \[
        \pi_{(k+1)N+1,kN+1}(T')=\pi_{kN+1}(\pi_{(k+1)N+1,kN+2}(T'))= T.\qedhere
        \]
    \end{proof}
  
    Let $m\in\N$ and $T = A^{m}(B[0,\rho]) +z \in \mathcal{E}_m$ be given. Consider $T$ as Player\;I's $m$-th play, that is, let $U_{m} = T$, and consider the following finite sequence as Player\;I's previous plays, 
        \begin{align*}
        T \subseteq \pi_{m,m-1}(T)\subseteq\cdots \subseteq \pi_{m,1}(T).
        \end{align*}
    In other words, suppose that $\pi_{m,i}(T) = U_{i}$, for $i\in \{1, 2, \dots, m-1\}$. 
    Since Player\;II has a winning strategy, on their $m$-th turn, they select
        \begin{align*}
        \mathcal{A}(U_m,\pi_{m,m-1} (U_m),\ldots,\pi_{m,1}(U_m))=\left\{ (q_{i,m},y_{i,m}) : q_{i,m}
        \geq1 \; \text{and} \; y_{i,m}
        \in\R^n\right\}_{i\in I_m}.
        \end{align*}
    For $k<m$, we define
        \begin{align*}
        \mathcal{A}^*_k(U_m)=\left\{
        (q_{i,k},y_{i,k})
        \in\mathcal{A}(\pi_{m,k}(U_m),\dots,\pi_{m,1}(U_m)):(A^{q_{i,k}
        }(B[0,\rho])+ 
        y_{i,k}
        )\cap U_m\neq\emptyset \right\},
        \end{align*}
    and set
        \begin{align*}
        \phi_m(U_{m})= \begin{cases}
        0  & \text{if $m=1$,}\\
        \sum\limits_{t=1}^{m-1} \sum\limits_{(q_{i,m},y_{i,m})\in \mathcal{A}^*_t(U_{m})} \left(\prod\limits_{j=1}^n \beta_{jj}^{q_{i,m}}\right)^c & \text{otherwise.}  
        \end{cases}
        \end{align*}
    Further, for 
        $r, l \in \N$, with $r > m$, set
        \begin{align*}
        \mathcal{D}_{r}(T)= \left\{T'\in\mathcal{D}_{r} : T'\subseteq 2^{-1} A^m(B[0,\rho])+z\right\}
        \quad \text{and} \quad
        \mathcal{D}_{l}'= \left\{T\in\mathcal{D}_{l} :\phi_{l} (T)\leq \left(\delta\prod_{j=1}^{n} \beta_{jj}^{l} \right)^c \right\}.
        \end{align*} 
    Through the following claim, a technical counting argument which we prove in \Cref{Appendix}, for $k\in\N_0$, we obtain a useful lower bound for the cardinality of the intersection $\mathcal{D}_{(k+1)N+1}(T)\cap \mathcal{D}'_{(k+1)N+1}$.
    
    \begin{claimA}\label{claim:claim_B}
        For all $k\in\N_0$ and $T\in\mathcal{D}_{kN+1}'$, 
        \begin{align}\label{eq:ClaimB}
            \#\left( \mathcal{D}_{(k+1)N+1}(T)\cap \mathcal{D}_{(k+1)N+1}' \right)
            \geq \left(\prod_{j=1}^n\beta_{jj}^{-N}\right)\left( 3^{-n}\prod_{j=1}^{n} (1-5\beta_{jj}^N) -8^n (1+2^{2n+1})\delta\right) > 0.
        \end{align}
    \end{claimA}

    Given this bound, we construct the following set $F$, which we will show is a subset of our target set $S$.
    \begin{itemize}
        \item Let $\mathcal{B}_0 = \{ U_1 \}$, where $U_{1}$ is as defined at the start of our proof. 
        Observe that $U_1\in \mathcal{D}_1$ and $\phi_1(U_1)=0$.
        Therefore, $U_1\in \mathcal{D}_1'$, and hence $\mathcal{B}_0\subseteq \mathcal{D}_1'$.
        \item For $k\in\N_0$, given a collection $\mathcal{B}_{k}\subseteq \mathcal{D}_{kN+1}'$ we construct the next level $\mathcal{B}_{k+1}$ by replacing each element $T\in\mathcal{B}_k$ by         
            \begin{align}\label{eq:the_constant_K}
            K= \left\lceil\left(\prod_{j=1}^n\beta_{jj}^{-N}\right)\left( 3^{-n} \prod_{j=1}^{n} \left(1-5\beta_{jj}^N\right) -8^n (1+2^{2n+1})\delta\right)\right\rceil
            \end{align}
        elements in $\mathcal{D}_{(k+1)N+1}(T)\cap \mathcal{D}_{(k+1)N+1}'$.
        This is possible due to \Cref{claim:claim_B}.
    \end{itemize}
    We set
        \begin{align}\label{eq:sub_cantor_set}
            F = \bigcap_{k\in\N_0} \bigcup_{B \in \mathcal{B}_k} B.
        \end{align}
    By \eqref{eq:Thm-lower-bound-ass-1} and \Cref{claim:claim_B} we have that $\mathcal{B}_{k}$ is non-empty, by construction we have that $\mathcal{B}_k$ consist of closed sets, and by \Cref{claim:claim_A,claim:claim_B} and the definition of $\mathcal{B}_k$, to each $R \in \mathcal{B}_k$, there exists a $V \in \mathcal{B}_{k+1}$ with $V \subseteq R$. Thus, by Cantor's intersection theorem the set $F$ is non-empty and compact.    
    Moreover, $F$ falls into the class of sets considered in \cite[Section 2.4]{MR2581371} and \cite{zbMATH05003392}, thus we can obtain a lower bound on the Hausdorff dimension of $F$: see \Cref{Hausdorff} in the Appendix for more details.
    In particular, we obtain that 
    \begin{align*}
        \text{dim}_H(F)&\geq
        \max \left\{ n-K_1\frac{\alpha}{\lvert\log \beta_{\max}\rvert}, 0 \right\}
        \,\; \text{where} \,\;
        K_1=2\delta^{-1}\left\lvert\log\left( \prod\limits_{j=1}^n\left( \frac{1}{3}(1-5\beta_{jj}^N)\right)-8^n (1+2^{2n+1})\delta\right)\right\rvert.
        \end{align*}
    Thus, it suffices to show $F \subseteq S \cap U_{1}$. To this end, for a given $x\in F$, we build a sequence of plays $(U_k)_{k\in\N}$ for Player\;I with $\cap_{k \in \N} U_k=\{x\}$ and so that the winning strategy ensures $x\in S$.
    
    For each $x \in F$, by the definition of $F$ and \Cref{claim:claim_A}, there exists a unique nested sequence of sets $(T_{kN+1})_{k\in\N_0}$ such that $x \in T_{kN+1}\in\mathcal{B}_{k} \subseteq \mathcal{D}_{kN+1}(T_{(k-1)N+1})\cap \mathcal{D}_{kN+1}'$ with $\pi_{kN+1,(k-1)N+1}(T_{kN+1})=T_{(k-1)N+1}$, for all $k \in \N$.
    For $l \in \N$ with $l \equiv 1 \operatorname{mod} N$, we set $U_{l}=T_{l}$ and for $l \in\N$ with $l \not\equiv 1 \operatorname{mod} N$, we let $\ell = \ell_{l}$ be the smallest natural number such that $l \leq \ell N+1$ and set $U_{l}= \pi_{\ell N+1,l}(T_{\ell N+1})$.
    By construction, for $l\in\N$, this sequence has the property that $U_{l}\in \mathcal{E}_{l}$, $U_{l}=\pi_{l}(U_{l+1})$ and if $l=kN+1$ for some $k \in \N_0$, then $U_{l} \in \mathcal{B}_{k}$.
    It can readily be verified that this sequence $(U_{l})_{l\in \N}$ constitutes a possible sequence of plays by Player\;I with outcome $x$.

    Observe that $F \subseteq U_{1}$ and fix $x \in F$.
    Assume for a contradiction that $x \not\in S$.
    Let $(U_{l})_{l\in\N}$ be the sequence of plays described above with $x$ the outcome of the game.
    Since Player\;II has a winning strategy and since we have assumed that $x\not\in S$, 
        \begin{align*}
        x \in \bigcup_{m\in\N} \bigcup_{\substack{(q_{i,m},y_{i,m})\\\in\mathcal{A}(U_m,\dots,U_1)}} \left(A^{q_{i,m}}(B[0,r])+y_{i,m}\right).
        \end{align*}
    Therefore, $x \in A^{q_{i,m}}(B[0,r])+y_{i,m}$ for at least one $m\in\N$ and $(q_{i,m},y_{i,m})\in\mathcal{A}(U_m,\dots,U_1)$.
    Moreover, since $x \in U_{l}$ for all $l\in\N$, we have $x\in(A^{q_{i,m}}(B[0,r])+y_{i,m})\cap U_{l}$ for all $l \geq m$ and therefore $(q_{i,m},y_{i,m})\in\mathcal{A}_m^*(U_{l})$.
    Hence, for every $k \in \N$ with $kN+1> m$,
        \begin{align*}
        \phi_{kN+1}(U_{kN+1})\geq \left(\prod\limits_{j=1}^n \beta_{jj}^{q_{i,m}} \right)^c.
        \end{align*}
    However, for $k \in \N$, we have $\phi_{kN+1}(U_{kN+1}) \leq (\delta\prod_{j=1}^n\beta_{jj}^{kN+1})^c$, since $U_{k N+1}\in \mathcal{B}_{k}\subseteq \mathcal{D}_{k N+1}'$, and so
        \begin{align*}
        \left(\prod\limits_{j=1}^n \beta_{jj}^{q_{i,m}} \right)^c\leq \left(\delta\prod\limits_{j=1}^n\beta_{jj}^{kN}\right)^c
        \end{align*}    
    for all $k \in \N$ with $kN+1\geq m$.
    Letting $k$ tend to infinity, we obtain that $(\prod_{j=1}^n \beta_{jj}^{q_{i,m}})^c=0$, yielding a contradiction to our hypothesis that $\beta_{jj} \in (0, 1/5)$ for $j \in \{ 1, \dots, n \}$. 
    Therefore, $F \subseteq S \cap U_1$,
    and so $\dim_H(S\cap U_1) \geq \dim_H(F) \geq n - K_{1}\alpha/\lvert\log(\beta_{\max})\rvert$.
\end{proof}

\section{Applications to patterns and intersections}\label{sec:applications}

We conclude by presenting several applications of \Cref{thm:winning-set-dimension}.
In \Cref{subsec:patterns}, we provide sufficient conditions for the existence of patterns in winning sets, and in \Cref{subsec:intersections}, we provide a lower bound for the Hausdorff dimension of the intersection of a collection of winning sets, in particular those sets constructed in \Cref{exa:RCO,exa:RCD}, which includes a large class of self-affine sets.

\subsection{Patterns in winning sets}\label{subsec:patterns}

We say that a set $S$ \textit{contains a homothetic copy of} $C=\{b_1,...,b_M\}$, with $M \in \N$, often called a \textit{pattern}, if there exists $\lambda >0$ such that $\cap_{i=1}^M (S-\lambda b_i)\neq\emptyset$.

In \cite{falconer2022intersections-ThicknessBasedGame,OriginalAlexiaPaper,yavicoli2022thickness-BoundaryBasedGame} the potential game was used to verify the existence of patterns within classes of compact sets with a given thickness.
In fact, these results are stronger than simply containing patterns, they verify that there exists a homothetic copy of every finite set of a certain cardinality.
We prove a similar result for winning sets of the matrix potential game.
This allows us to prove the existence of patterns in certain self-affine sets, to which the results in \cite{falconer2022intersections-ThicknessBasedGame,OriginalAlexiaPaper,yavicoli2022thickness-BoundaryBasedGame}, to authors' knowledge, cannot be applied.

\begin{thrm}\label{thm:patterns}
    Given $n \in \N$, let $A$ be a diagonal ($n\!\times\!n$)-matrix with diagonal entries $\beta_{11},\dots,\beta_{nn} \in (0, 1/5)$, $\beta_{\max} = \max_j \beta_{jj}$, $\alpha,c\in(0,1)$ and $\rho_2>0$.
    Further, assume there exist $\delta\in(0,1)$ and $M \in \N$ such that
        \begin{align}\label{pattern_inequal_1}
            M\alpha^c\leq \delta^{2}\left(1-\left(\prod_{j=1}^n\beta_{jj}\right)^{1-c}\right) \quad \text{and} \quad 
            3^{-n} \prod\limits_{j=1}^{n} (1-5\beta_{jj}^{\lfloor\delta(M^{1/c}\alpha)^{-1}\rfloor}) > 8^n (1+2^{2n+1})\delta.
        \end{align} 
    For all $S\in\mathcal{S}(\alpha,A,c,\rho_2)$, given a finite non-empty set $C$ with at most $M$ elements, $y\in\R^n$ and $\lambda\in (0,\rho_2(1-\beta_{\max})/\text{diam}(C))$, there exists a non-empty set $X\subseteq \R^n$ such that $\lambda C+x\subseteq S\cap B[y,\rho_2]$ for all $x\in X$.
    Moreover, if
    \begin{align*}
        M\alpha^c\leq \min\{\delta^2,K_{M}^{-1} n \log \beta_{\max}^{-1}\}\left(1-\left(\prod_{j=1}^n\beta_{jj}\right)^{1-c}\right),
    \end{align*}
    where 
        \begin{align*}
        K^{}_{M} =2\delta^{-1}\left\lvert \log\left( 3^{-n}\prod\limits_{j=1}^{n} (1-5\beta_{jj}^{\lfloor\delta(M^{\frac{1}{c}}\alpha)^{-1}\rfloor}) - 8^n (1+2^{2n+1})\delta\right)\right\rvert,
        \end{align*}
    then $\dim_{H}(X) \geq n-K^{}_{M} \alpha (\lvert\log (\beta_{\max}) \rvert)^{-1}>0$.
\end{thrm}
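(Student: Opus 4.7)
The plan is to reduce the pattern problem to a statement about non-emptiness (and dimension) of an intersection of translates of $S$, and then invoke the countable intersection property \Cref{lem:countableintersections} together with the main dimension bound \Cref{thm:winning-set-dimension}. Concretely, observe that $\lambda C + x \subseteq S$ is equivalent to $x\in\bigcap_{b\in C}(S-\lambda b)$, so the theorem reduces to showing that such an intersection, restricted to a suitably small neighbourhood of $y$, is non-empty and has large Hausdorff dimension.

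First I would check that each translate $S_i := S - \lambda b_i$ (for $b_i \in C$) is $(\alpha,A,c,\rho_2,\rho_1)$-winning with the same parameters: given a winning strategy for $S$, Player\,II simulates in the translated game by regarding Player\,I's plays $U_m$ as plays $U_m - \lambda b_i$ of the original game and translating the resulting deletions back by $\lambda b_i$. Since the defining inequalities of the game depend only on radii (via the matrix $A$) and not on the centre points, translation-invariance follows directly. Once this is established, \Cref{lem:countableintersections} applied to the collection $\{S_i\}$ gives that $\bar{S} := \bigcap_{i=1}^{\#C} S_i$ is $((\#C)^{1/c}\alpha,A,c,\rho_2,\rho_1)$-winning, and monotonicity (\Cref{lem:monotonicity}) upgrades this to $(M^{1/c}\alpha,A,c,\rho_2,\rho_1)$-winning since $\#C\leq M$.

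Next I would apply \Cref{thm:winning-set-dimension} to $\bar{S}$ with the effective parameter $\tilde{\alpha} := M^{1/c}\alpha$: the two inequalities in \eqref{pattern_inequal_1} are precisely the hypotheses \eqref{eq:Thm-lower-bound-ass-1} after the substitution $\alpha\mapsto\tilde{\alpha}$ (with the same $\delta$), so the theorem applies. Fixing any $b_0\in C$ and setting $y' := y-\lambda b_0$, define
\[
X := \bar{S}\cap \bigl(A(B[0,\rho_2])+y'\bigr);
\]
\Cref{thm:winning-set-dimension} then yields that $X$ is non-empty with the claimed Hausdorff dimension lower bound. For any $x\in X$ and $b\in C$, the bound $|x-y'|_\infty\leq \rho_2\beta_{\max}$ (from $x\in A(B[0,\rho_2])+y'$) combined with the restriction $\lambda<\rho_2(1-\beta_{\max})/\operatorname{diam}(C)$ gives
\[
|x+\lambda b-y|_\infty \,\leq\, \rho_2\beta_{\max}+\lambda|b-b_0|_\infty \,\leq\, \rho_2\beta_{\max}+\lambda\operatorname{diam}(C) \,<\, \rho_2,
\]
so $\lambda C+x\subseteq S\cap B[y,\rho_2]$. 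The additional hypothesis in the \emph{moreover} clause (the second term in the minimum) is exactly what is needed to force the dimension bound to be strictly positive.

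The main obstacle is less a conceptual difficulty than bookkeeping: translation-invariance of the winning property must be phrased carefully (though it is essentially immediate), and one must verify directly that the parameter substitution $\alpha\mapsto M^{1/c}\alpha$ carries the hypotheses \eqref{pattern_inequal_1} to the hypotheses of \Cref{thm:winning-set-dimension} applied to $\bar{S}$. The geometric step requiring the quantitative smallness of $\lambda$ is the only place the hypothesis $\lambda<\rho_2(1-\beta_{\max})/\operatorname{diam}(C)$ is actually used, and it serves exclusively to keep the full pattern $\lambda C+x$ inside the target ball $B[y,\rho_2]$.
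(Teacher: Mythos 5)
Your proposal is correct and follows essentially the same route as the paper: reduce the pattern problem to showing that the intersection $\bigcap_{b\in C}(S-\lambda b)$, restricted to $A(B[0,\rho_2])+y'$, is non-empty and large, then invoke translation-invariance of winning, the countable intersection property (\Cref{lem:countableintersections}), and \Cref{thm:winning-set-dimension} with the substituted parameter $M^{1/c}\alpha$, finishing with a geometric check that the smallness of $\lambda$ keeps the full pattern inside $B[y,\rho_2]$. The only cosmetic difference is that the paper works with trimmed translates $S\cap B[y,\rho_2]-a_i$ padded by $\R^n\setminus B[y-a_i,\rho_2]$ (arguing winningness via the superset property) and then shows the padding contributes nothing, whereas you use the raw translates $S-\lambda b_i$ directly, invoking translation-invariance as an explicit fact and handling the ball-containment estimate at the end; both carry exactly the same content.
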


\begin{proof}
    Let $y\in\R^n$ be given and assume without loss of generality that $C$ is a non-empty set with exactly $M$ elements. 
    Let $\lambda\in(0,\rho_2(1-\beta_{\max})/\text{diam}(C))$ be given.
    Denote $\lambda C=\{a_1,\dots,a_M\}$ and assume, without loss of generality, since we may apply a translation by $y-a_{1}$ to $X$,
    that $a_1=y$, so 
    that $\lambda C\subseteq B[y,\rho_2(1-\beta_{\max})]$.
    Let $S\in\mathcal{S}(\alpha,A,c,\rho_2)$ and, for each $i\in \{1,...,M\}$, write $S_i= S\cap B[y,\rho_2]-a_i$.
    Observe that if the set
    \begin{align*}
    X=\left(A(B[0,\rho_2])+y\right)\cap\bigcap\limits_{i=1}^M S_i
    \end{align*}
    is non-empty, then for all $x \in X$ and $i \in \{1,\ldots,M\}$, we have $x + a_i \in S_i + a_i = S \cap B[y,\rho_2]$, and so it follows $x + \lambda C \subseteq S \cap B [y,\rho_2]$.
    Thus, it suffices to show the set $X$ is non-empty.
    To this end, observe that there exists some $\rho_1\geq \rho_2$ such that $S$ is $(\alpha,A,c,\rho_2,\rho_1)$-winning.
    Thus, for all $i \in \{1,\ldots,M\}$,
    the set $S_i\cup(\R^n\setminus B[y-a_i,\rho_2])$ is $(\alpha,A,c,\rho_2,\rho_1)$-winning, since it is a superset of $S - a_i$ and so one can play the strategy for $S$ but translated.
    By the countable intersection property (Lemma~\ref{lem:countableintersections}) 
    we deduce $\cap_{i} (S_i\cup (\R^n\setminus B[y-a_i,\rho_2]))$ is $(M^{1/c}\alpha,A,c,\rho_2,\rho_1)$-winning.
    Therefore, Theorem~\ref{thm:winning-set-dimension} yields 
    \begin{align*}
         \emptyset
         &\neq \bigg(A(B[0,\rho_2])+y\bigg)\cap \left( \bigcap\limits_{i=1}^M S_i\cup (\R^n\setminus B[y-a_i,\rho_2])\right) \\
         &\subseteq \bigg(A(B[0,\rho_2])+y\bigg)\cap \left( \bigcap\limits_{i=1}^M S_i \cup (\R^n\setminus \bigcap\limits_{i=1}^M B[y-a_i,\rho_2]) \right)\\
         &=X \cup \left(\bigg(A(B[0,\rho_2])+y\bigg)\cap  (\R^n\setminus \bigcap\limits_{i=1}^M B[y-a_i,\rho_2])\right).
    \end{align*}
    The second term in the union in the final line is empty since, as $\lambda < \rho_2 (1- \beta_{\max}) /\operatorname{diam} (C)$, we have
    \begin{align*}
        \bigcap\limits_{i=1}^M \bigg(B[y-a_i,\rho_2]\bigg)\supseteq B[y,\rho_2-\rho_2(1-\beta_{\max})]=B[y,\rho_2 \beta_{\max}] \supseteq A(B[0,\rho_2])+y.
        \end{align*}
    Thus, $X$ is non-empty and so $x + \lambda C \subseteq S \cap B[y,\rho_2]$ for all $x \in X$.
    The lower bound for $\dim_{H}(X)$ follows by application of \Cref{cor:positive-HD}.
\end{proof}

Given the setup of \Cref{thm:patterns}, we let $M (\alpha, A, c, \rho_2)$ denote the largest natural number such that there exists a $\delta$ satisfying  \eqref{pattern_inequal_1}.

As a consequence of \Cref{thm:patterns}, we obtain the following application to distance sets.
Namely, that winning sets with sufficiently strong winning conditions contain an interval of distances on each axis within every sufficiently large closed ball.
For given $\eta_1,\dots,\eta_n\in(0,1)$, the result follows immediately from \Cref{thm:patterns} by considering the set $C= \{(0,\dots,0),(\eta_1,\dots,\eta_n)\}$.

\begin{cor}\label{cor:cor_6.2}
    Given $n \in \N$, let $A$ be a diagonal ($n\!\times\!n$)-matrix with diagonal entries $\beta_{11},\dots,\beta_{nn} \in (0, 1/5)$, let $\alpha,c\in (0,1)$, and let $\rho>0$.
    If $M\left(\alpha,A,c,\rho_2\right)\geq 2$, then given $\eta_1,\dots, \eta_n\in [0,1]$ and $\lambda\in \left(0,(1-\beta_{\max})\rho_2\right)$, for all $y\in\R^n$ and $S\in\mathcal{S}(\alpha,A,c,\rho_2)$, we have $S\cap B[y,\rho_2]$ contains two points $x=(x_{1},\dots,x_{n}), y=(y_{1},\dots,y_{n})$ satisfying $|x_{j}-y_{j}|=\eta_j\lambda$ for all $j \in \{1,\dots,n\}$.
\end{cor}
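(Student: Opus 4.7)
The plan is to apply \Cref{thm:patterns} directly with $M=2$ and the two-point set
\begin{align*}
    C = \{(0,\ldots,0),\,(\eta_1,\ldots,\eta_n)\} \subset \R^n.
\end{align*}
Since $M(\alpha,A,c,\rho_2) \geq 2$ by hypothesis, the inequalities in \eqref{pattern_inequal_1} are satisfied for some $\delta \in (0,1)$ with $M=2$, so the hypotheses of \Cref{thm:patterns} are met.

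Next, I would check that the admissible range for $\lambda$ in \Cref{thm:patterns} contains the interval $(0, (1-\beta_{\max})\rho_2)$ stated in the corollary. With respect to the square metric $d_\infty$, we have $\operatorname{diam}(C) = \max_{j} \eta_j \leq 1$, so the condition $\lambda \in (0, \rho_2(1-\beta_{\max})/\operatorname{diam}(C))$ from \Cref{thm:patterns} is implied by $\lambda \in (0, (1-\beta_{\max})\rho_2)$ (the degenerate case $\operatorname{diam}(C)=0$ being trivial, since then any point of $S \cap B[y,\rho_2]$, which is non-empty by \Cref{thm:patterns} applied to a one-element $C$, serves as both of the required points).

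Finally, \Cref{thm:patterns} yields a non-empty set $X \subseteq \R^n$ such that $\lambda C + x \subseteq S \cap B[y,\rho_2]$ for every $x \in X$. Picking any $x = (x^{(1)},\ldots,x^{(n)}) \in X$ and setting $y' = x + \lambda(\eta_1,\ldots,\eta_n) =: (y^{(1)},\ldots,y^{(n)})$, both $x$ and $y'$ belong to $S \cap B[y,\rho_2]$ and satisfy $|x^{(j)} - y^{(j)}| = \lambda \eta_j$ for each $j \in \{1,\ldots,n\}$, as required. There is no real obstacle here: the work has been done in \Cref{thm:patterns}, and the only nuance is the comparison of $\operatorname{diam}(C)$ (in the square metric) with $1$ to reconcile the slightly different bounds on $\lambda$.
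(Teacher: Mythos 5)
Your proof is correct and follows the same approach as the paper, which simply invokes \Cref{thm:patterns} with $C=\{(0,\dots,0),(\eta_1,\dots,\eta_n)\}$. You add a careful check that $\operatorname{diam}(C)\le 1$ in the square metric (so the $\lambda$-range is admissible) and handle the degenerate case $\eta_1=\cdots=\eta_n=0$, both of which the paper leaves implicit.
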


We now demonstrate how \Cref{thm:patterns} can be used to show that for all $M \in \N$, there exist totally disconnected, self-affine sets containing a homothetic copy of every set with at most $M$ elements.
This extends an implication of \cite[Theorem 20]{yavicoli2022thickness-BoundaryBasedGame}, which establishes the existence of dust-like \emph{self-similar} sets with this property.
In fact, we can provide explicit examples of such sets in the family $\mathcal{RCD} (U,V)$ defined in Example~\ref{exa:RCD}, for sufficiently large $U,V \in \N$.
\begin{prop}\label{prop:self-affine-patterns}
    Let $M\in\N$ and $\ell\in\N_0$.
    There exists $U\in\N$ such that every $F\in \mathcal{RCD}(U,U+\ell)$ contains a homothetic copy of every set with at most $M$ elements.
\end{prop}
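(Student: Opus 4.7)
The plan is to combine \Cref{prop:RCD-winning-conditions} (which exhibits sets in $\mathcal{RCD}(U,V)$ as winning sets of the matrix potential game) with \Cref{thm:patterns} (which extracts homothetic copies of finite patterns from winning sets satisfying sufficiently strong winning conditions). Given $M\in\N$ and $\ell\in\N_0$, I set $V := U+\ell$ and fix parameters $c\in(1/2,1)$, $t>0$ and $\delta\in(0,1)$; the aim is then to show that for all sufficiently large $U$, the quantity
\[
    \alpha = \alpha(c,t) = \bigl(9(U-1)(V-1)N_t\bigr)^{1/c}(UV)^{-(1+t)}
\]
from \Cref{prop:RCD-winning-conditions} satisfies the pair of inequalities \eqref{pattern_inequal_1} required by \Cref{thm:patterns}.

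A convenient choice is $c = 7/8$ and $t = 1$. With these, for $U\ge 3$ each ceiling in \eqref{eq:Nt-value} has an explicit leading order in $U$: $\lceil U/(U-1)\rceil = \lceil V/(V-1)\rceil = 2$, $\lceil V^2/(V-1)\rceil = V+2$ and $\lceil U^2/(U-1)\rceil = U+2$, so both arguments of the minimum in \eqref{eq:Nt-value} equal $2U+2V+4$, giving $N_1 = 2U+2V+4$. A direct calculation then shows $\alpha^c = 9(U-1)(V-1)N_1(UV)^{-7/4} = O(U^{-1/2})$, so $\alpha\to 0$ as $U\to\infty$. I next fix $\delta$ small enough that $8^{2}(1+2^{5})\delta < 3^{-2}$, for example $\delta = 1/(2\cdot 9\cdot 2112)$. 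Both parts of \eqref{pattern_inequal_1} then hold for all sufficiently large $U$: the first, $M\alpha^c\le \delta^{2}(1-(UV)^{-(1-c)})$, follows from $M\alpha^c \to 0$ and $(UV)^{-(1-c)}\to 0$; for the second, $N' := \lfloor\delta(M^{1/c}\alpha)^{-1}\rfloor\to\infty$ as $U\to\infty$, so $\prod_{j=1}^{2}(1-5\beta_{jj}^{N'})$ converges to $1$ and the left-hand side of the inequality to $3^{-2}=1/9$, strictly exceeding the right-hand side by the choice of $\delta$.

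Taking $U$ large enough that both inequalities hold, and additionally $U\ge 6$ so that $\beta_{11}=1/U$ and $\beta_{22}=1/V$ both lie in $(0,1/5)$, \Cref{prop:RCD-winning-conditions} yields $S := F \cup (\R^{2}\setminus B[0,1]) \in \mathcal{S}(\alpha, A_{U,V}, c, 1)$ for every $F\in\mathcal{RCD}(U,V)$. Applying \Cref{thm:patterns} with $y=0$ and $\rho_{2}=1$ then produces, for every non-empty $C\subseteq\R^{2}$ with $|C|\le M$ and every sufficiently small $\lambda>0$, a non-empty $X\subseteq\R^{2}$ such that $\lambda C + x \subseteq S\cap B[0,1]$ for all $x\in X$; since $F\subseteq B[0,1]$ gives $S\cap B[0,1] = F$, this exhibits the required homothetic copy of $C$ inside $F$. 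The main obstacle is the asymptotic analysis of $N_t$, whose defining ceilings behave very differently in the regimes $t<1$, $t=1$ and $t>1$; the choice $(c,t) = (7/8,1)$ is made precisely so that all ceilings collapse cleanly to polynomials in $U$ and $V$ with explicit coefficients, giving the clean decay rate $\alpha^{c} = O(U^{-1/2})$ that drives the entire argument.
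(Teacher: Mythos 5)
Your proposal is correct and follows essentially the same route as the paper's proof: invoke \Cref{prop:RCD-winning-conditions} to express $F\cup(\R^2\setminus B[0,1])$ as an $(\alpha(c,1),A_{U,U+\ell},c,1,1)$-winning set with $t=1$ collapsing $N_t$ to the linear expression $N_1=2U+2V+4$, observe that $\alpha(c,1)^c\to 0$ as $U\to\infty$, and conclude via \Cref{thm:patterns} once both inequalities of \eqref{pattern_inequal_1} hold for a fixed $\delta$. The only differences are cosmetic — you take $c=7/8$ where the paper takes $c=9/10$ (any fixed $c\in(3/4,1)$ works, since $\alpha^c=O(U^{3-4c})$), and you make the choice of $\delta$ and the limiting argument for the second inequality slightly more explicit than the paper does; also, note the collapse of the ceilings in $N_t$ is governed by $t=1$ alone, not by the pair $(c,t)$.
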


\begin{proof}
    Setting $c=9/10$, by Proposition~\ref{prop:RCD-winning-conditions}, for all $U\in\N\setminus \{1\}$ and $F\in \mathcal{RCD}(U,U+\ell)$ the set $F \cup (\R^2 \setminus B [0,1])$ is $(\alpha^{}_U (c,1),A_{U,U+\ell},c,1,1)$-winning, where
        \begin{align*}
            \alpha^{}_U (c,1) = (9(U-1)(U+\ell-1)N_1)^{1/c} (U(U+\ell))^{-2},
        \end{align*}
        with
        \begin{align*}
            N_{1}&=\min\left\{\left\lceil \frac{U}{U-1} \right\rceil\left\lceil \frac{(U+\ell)^{2}}{U+\ell-1} \right\rceil+\left\lceil \frac{(U+\ell)}{U+\ell-1} \right\rceil\left\lceil U \right\rceil,\left\lceil \frac{U^{2}}{U-1} \right\rceil\left\lceil \frac{(U+\ell)}{U+\ell-1} \right\rceil+\left\lceil \frac{U}{U-1} \right\rceil\left\lceil (U+\ell) \right\rceil \right\}\\
            &= 2(U+2)+2(U+\ell)=2(2U+\ell+2).
        \end{align*}
        Therefore, it follows that 
        \begin{align*}
            \alpha^{}_U \left(9/10,1\right) &= (18(U-1)(U+\ell-1)(2U+\ell+2))^{1/c} (U(U+\ell))^{-2}\\
            &\leq \left( 36+\frac{36}{U+\ell} \right)^{10/9}U^{-6/9}\left(1+\frac{\ell}{U}\right)^{2/9}.
        \end{align*}
        Observing that $ \alpha^{}_U \left(9/10,1\right) \rightarrow 0$ as $U \rightarrow \infty$,
        we have that, for a given $M\in\N$, the inequalities in \eqref{pattern_inequal_1} are satisfied for all sufficiently large $U$.
        Hence, applying Theorem~\ref{thm:patterns}, there exists sufficiently large $U\in\N$ such that for each set $F\in \mathcal{RCD}(U,U+\ell)$ the set $(F \cup (\R^2 \setminus B [0,1]))\cap B[0,1]=F$ contains a homothetic copy of every set with at most $M$ elements.
\end{proof}

Since, for all $U,V \in \N$, the family $\mathcal{RCD} (U,V)$ contains a totally disconnected attractor of an IFS, we obtain the following consequence of \Cref{prop:self-affine-patterns}.

\begin{cor}
    Given $M\in \N$, there exists a totally disconnected attractor of an iterated function system consisting of strictly affine maps that contains a homothetic copy of every set with at most $M$ elements.
\end{cor}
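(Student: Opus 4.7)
The plan is to combine \Cref{prop:self-affine-patterns} with an explicit IFS construction within the family $\mathcal{RCD}$. First, I would apply \Cref{prop:self-affine-patterns} with $\ell = 1$ to obtain some $U \in \N$ with $U \geq 2$ such that every $F \in \mathcal{RCD}(U, U+1)$ contains a homothetic copy of every set with at most $M$ elements. Since the diagonal contraction $A_{U,U+1}$ has distinct diagonal entries, any IFS built from this linear part together with translations necessarily consists of strictly affine maps (not similarities), which takes care of the ``strictly affine'' requirement.

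Next, I would construct a specific $F^* \in \mathcal{RCD}(U, U+1)$ realised as an IFS attractor. For each $i \in \{1, \ldots, (U-1)U\}$ with unique decomposition $i = s + (t-1)(U-1)$, set $\phi_i(x) = A_{U,U+1}(x) + \mathfrak{u}_{s,0} + \mathfrak{v}_{1,1,0}$ in the notation of \Cref{exa:RCD}. Geometrically, this places $\phi_i(B[0,1])$ at the bottom-left corner of the first-stage region $L_i^{(1)}$. Exploiting the identities $\mathfrak{u}_{s,k} = A_{U,U+1}^k \mathfrak{u}_{s,0}$ and $\mathfrak{v}_{q,r,k} = A_{U,U+1}^k \mathfrak{v}_{q,r,0}$, which follow directly from their coordinate definitions, a short induction on word length shows that the IFS attractor $F^* = \bigcap_{k \in \N_0} \bigcup_{|w|=k} \phi_w(B[0,1])$ coincides with the Moran set in $\mathcal{RCD}(U, U+1)$ produced by the uniform bottom-left corner rule. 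In particular, $F^* \in \mathcal{RCD}(U, U+1)$ and so inherits the pattern-containment property.

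Finally, I would verify total disconnectedness of $F^*$ by establishing strong separation of the first-level cylinders. The bottom-left placement leaves horizontal gaps of $2/(U-1) - 2/U = 2/(U(U-1))$ and vertical gaps of $2/U - 2/(U+1) = 2/(U(U+1))$ between adjacent images $\phi_i(B[0,1])$ and $\phi_j(B[0,1])$, so these rectangles are pairwise at strictly positive distance. The uniform self-affine structure then propagates this separation to every deeper level, and a routine diameter-to-zero argument forces $F^*$ to be totally disconnected.

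The main obstacle is confirming that the IFS attractor really lies in $\mathcal{RCD}(U, U+1)$, since the Moran-set formulation in \Cref{exa:RCD} defines the centres $b_{\omega i}$ recursively in $\omega$, whereas the IFS unfolds to $\sum_{j=1}^k A_{U,U+1}^{j-1}(c_{w_j})$; this matching is exactly what the two algebraic identities above deliver. Once that bookkeeping is in place, both pattern-containment (via \Cref{prop:self-affine-patterns}) and total disconnectedness (via the explicit gap computation) are immediate, completing the construction.
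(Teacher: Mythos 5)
Your proof is correct and follows essentially the same route as the paper: invoke \Cref{prop:self-affine-patterns} for the pattern property, then observe that $\mathcal{RCD}(U,V)$ contains a totally disconnected IFS attractor of strictly affine maps. The paper states this second ingredient as a bare assertion, whereas you supply the missing verification: the explicit IFS $\phi_i(x) = A_{U,U+1}x + \mathfrak{u}_{s,0} + \mathfrak{v}_{1,1,0}$, the identities $\mathfrak{u}_{s,k} = A_{U,U+1}^k \mathfrak{u}_{s,0}$ and $\mathfrak{v}_{q,r,k} = A_{U,U+1}^k \mathfrak{v}_{q,r,0}$ needed to reconcile the recursive Moran-set indexing with the iterated-function expansion $\sum_j A^{j-1}c_{w_j}$, and the gap computation $2/(U(U-1))$ and $2/(U(U+1))$ establishing strong separation of first-level cylinders, hence total disconnectedness. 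You are also more careful than the paper on one small point: you take $\ell = 1$ (so $V \neq U$) to guarantee $A_{U,V}$ is not a scalar matrix, which is exactly what makes the resulting maps strictly affine rather than similarities; the paper's remark ``for all $U, V \in \N$'' glosses over this, since $U = V$ would give a self-similar set.
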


We now apply \Cref{thm:patterns} to demonstrate the existence of patterns for some explicit examples of sets in the families introduced in \Cref{sec:winning-sets}.
We highlight that the techniques developed in \cite{falconer2022intersections-ThicknessBasedGame,yavicoli2022thickness-BoundaryBasedGame} cannot be applied here as these sets have zero thickness.

\begin{exmp}
    Given $U,V \in \N \setminus \{1\}$, $\ell>0$ and $m\in\N$, let $\mathcal{RCO} (U,V,m,\ell)$ be the family of sets defined in \Cref{exa:RCO}.
    Using \Cref{thm:patterns}, we can show that certain patterns appear in every $E \in \mathcal{RCO} (U,V,m,\ell)$, depending on the values of $U,V,m$ and $\ell$.
    By \Cref{prop:RCO-winning-cond}, the set $E \cup (\R^2 \setminus B[0,1])$ is $(\alpha(c),A_{U,V},c,1,1)$-winning for all $c \in (0,1)$, where $\alpha(c)$ is as defined in \eqref{eq:RCO-alpha-value}, and $A_{U, V}$ is as in \eqref{eq:AUV}.
    Thus, to apply \Cref{thm:patterns} and to determine if $E \cup (\R^2 \setminus B[0,1])\cap B[0,1]=E$ contains patterns, it suffices to show that there exists a $\delta \in (0,1)$ such that the inequalities in \eqref{pattern_inequal_1} hold, with $\alpha = \alpha (c)$.
    Using numerical methods to find the optimum values of $c$ and $\delta$, we obtain the following.
    \begin{itemize}
    \itemsep0.5em
        \item For $U=12, V=15, m=1$ and $\ell=5$, every $F \in \mathcal{RCO} (U,V,m,\ell)$ contains a homothetic copy of every set with at most $4$ elements. 
        Moreover, for a set with at most $4$ elements, the associated set $X$ has $\dim_{H}(X) \geq 1.99996$.
        \item For $U=17, V=24, m=1$ and $\ell=5$, every $F \in \mathcal{RCO} (U,V,m,\ell)$ contains a homothetic copy of every set with at most $232$ elements. 
        Moreover, for a set with at most $232$ elements, the associated set $X$ has $\dim_{H}(X) \geq 1.99997$.
        \item For $U=271828$, $V=314159$, $m=2$ and $\ell=1$, every $F \in \mathcal{RCO} (U,V,m,\ell)$ contains a homothetic copy of every set with at most $3$ elements. 
        Moreover, for a set with at most $3$ elements, the associated set $X$ has
        $\dim_{H}(X) \geq 1.99997$.
    \end{itemize}
\end{exmp}

\begin{exmp}
    Given $U,V \in \N \setminus \{1\}$, let $\mathcal{RCD} (U,V)$ be the family of sets defined in \Cref{exa:RCD}.
    By \Cref{prop:RCD-winning-conditions}, for all $E \in \mathcal{RCD} (U,V)$, the set $E \cup (\R^2 \setminus B[0,1])$ is $(\alpha (c,t),A_{U,V},c,1,1)$-winning for all $c \in (0,1)$ and $t>0$, where $\alpha (c,t)$ is as defined in \eqref{eq:RCD-alpha-value}, and $A_{U,V}$ is as in \eqref{eq:AUV}.
    As with our previous example, to apply \Cref{thm:patterns} and to determine if $E \cup (\R^2 \setminus B[0,1])\cap B[0,1]=E$ contains patterns, it suffices to show that there exists a $\delta \in (0,1)$ such that the inequalities in \eqref{pattern_inequal_1} hold, with $\alpha = \alpha (c,t)$.
    Using numerical methods 
    we obtain the following.
    \begin{itemize}
    \itemsep0.5em
        \item For $U=2^{37}$ and $V=2^{38}$, every $F \in \mathcal{RCD} (U,V)$ contains a homothetic copy of every set with at most $4$ elements. 
        Moreover, for a set with at most $4$ elements, the associated set $X$ has $\dim_{H}(X) \geq 1.99999$.
        \item For $U=900019043105$ and $V=999921083009$, every $F \in \mathcal{RCD} (U,V)$ contains a homothetic copy of every set with at most $21$ elements. 
        Moreover, for a set with at most $21$ elements, the associated set $X$ has $\dim_{H}(X) \geq 1.99999$.
        \item Given an arbitrary set $F\in \mathcal{RCD}(900019043105,999921083009)$ and sets $\{E_k\}_{k=1}^5$ such that \mbox{$E_k \in\mathcal{RCO}(900019043105,999921083009,4,k)$}, we have that $E_1\cap E_2\cap E_3\cap E_4\cap E_5\cap F$ contains a homothetic copy of every set with at most $4$ elements.
        Moreover, for a set with at most $4$ elements, the associated set $X$ has $\dim_{H}(X) \geq 1.99999$.
    \end{itemize}
\end{exmp}
We highlight that despite the defining parameters in the previous examples being very large, this is still a strong result.
Indeed, previous techniques have not been able to study the
sets in $\mathcal{RCD}(U,V)$ when $U\neq V$ and even when $U=V$ the techniques only work for attractors with strong separation and exact values depend on constants that are known to exist but whose values are unknown.

\subsection{Dimensions of intersections}\label{subsec:intersections}

As a consequence of \Cref{thm:winning-set-dimension} and the countable intersection property (\Cref{lem:countableintersections}), we obtain a lower bound on the Hausdorff dimension of the intersection of a collection of winning sets.

\begin{cor}\label{cor:intersections}
    Given $n \in \N$, let $A$ be a diagonal ($n\!\times\!n$)-matrix with diagonal entries $\beta_{11},\dots,\beta_{nn} \in (0, 1/5)$, let $c\in (0,1)$, and let $\rho>0$.
    Moreover, let $I$ be an at most countable set and, for each $i \in I$, let $\alpha_i \in (0,1)$ and $S_i\in \mathcal{S}(\alpha_i,A,c,\rho_2)$.
    Set $S = \cap_{i \in I} S_i$.
    If there exists $\alpha \in (0,1)$ with $\alpha^c=\sum_{i \in I}\alpha_i^c$ and $\delta\in(0,1)$ such that
        \begin{align}\label{pattern_inequal_2}
            \alpha^c\leq \delta^2\left(1-\left(\prod_{j=1}^n\beta_{jj}\right)^{1-c}\right)<1 \quad \text{and} \quad 0<\prod\limits_{j=1}^n\left( \frac{1}{3}(1-5\beta_{jj}^{\lfloor\delta\alpha^{-1}\rfloor})\right)-8^n (1+2^{2n+1})\delta,
        \end{align}
        then, for every $y\in\R^n$, we have that $S\cap \big(A(B[0,\rho_2])+y\big)\neq \emptyset$.
        Moreover, letting $K_1$ be as defined in \eqref{eq:main_result_constant_K},
    \begin{align*}
        \text{dim}_H\left(S\cap \big(A(B[0,\rho_2])+y\big)\right)\geq\max\left\{ n-K_1\frac{\alpha}{\left|\log(\beta_{\max})\right|},0\right\}.
    \end{align*}
\end{cor}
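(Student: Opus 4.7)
The proof would be a direct assembly of the three principal tools developed earlier in the paper: the monotonicity lemma (\Cref{lem:monotonicity}), the countable intersection lemma (\Cref{lem:countableintersections}), and the main Hausdorff dimension theorem (\Cref{thm:winning-set-dimension}).

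The first step is to normalise the upper radius parameter across all of the sets $S_i$. Since $S_i \in \mathcal{S}(\alpha_i,A,c,\rho_2)$, by definition there exists some $\rho_{1,i} \geq \rho_2$ such that $S_i$ is $(\alpha_i,A,c,\rho_2,\rho_{1,i})$-winning. Applying \Cref{lem:monotonicity} with $\alpha' = \alpha_i$, $c' = c$, $\rho_2' = \rho_2$ and $\rho_1' = \rho_2$ shows that each $S_i$ is in fact $(\alpha_i,A,c,\rho_2,\rho_2)$-winning, which places every $S_i$ into a game with common parameters and allows the countable intersection lemma to be applied cleanly.

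Next, I would invoke \Cref{lem:countableintersections} with the common tuple $(A,c,\rho_2,\rho_2)$ and the sequence $(\alpha_i)_{i \in I}$. Since $\alpha^c = \sum_{i \in I} \alpha_i^c$ is finite by hypothesis (indeed, $\alpha \in (0,1)$), the lemma yields that $S = \bigcap_{i \in I} S_i$ is itself $(\alpha,A,c,\rho_2,\rho_2)$-winning, and hence $S \in \mathcal{S}(\alpha,A,c,\rho_2)$.

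Finally, I would observe that the hypotheses \eqref{pattern_inequal_2} are exactly the hypotheses \eqref{eq:Thm-lower-bound-ass-1} of \Cref{thm:winning-set-dimension} up to trivial rearrangement (using $3^{-n} = \prod_{j=1}^n 1/3$ and moving the deletion term to the right-hand side), and the condition $\alpha^c < 1$ embedded in \eqref{pattern_inequal_2} guarantees $\alpha \in (0,1)$ as required by the theorem. An application of \Cref{thm:winning-set-dimension} to $S$ then delivers both the non-emptiness of $S \cap (A(B[0,\rho_2]) + y)$ and the stated lower bound $\max\{n - K_1\alpha/\lvert \log \beta_{\max} \rvert, 0\}$ on its Hausdorff dimension. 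There is no substantive obstacle here: the corollary is essentially a bookkeeping combination of the monotonicity, countable intersection, and main dimension results, with the main effort having been expended in proving \Cref{thm:winning-set-dimension} itself.
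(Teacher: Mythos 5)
Your proposal is correct and reflects exactly the argument the paper intends (the corollary is left without an explicit proof, with the preceding sentence attributing it to the countable intersection lemma and the main dimension theorem). You correctly spot and patch the one subtle point that the paper's sentence glosses over: the $S_i$ a priori have different $\rho_{1,i}$, so you must first apply the monotonicity lemma to shrink all of them to a common $\rho_1 = \rho_2$ before the countable intersection lemma can be invoked, and this direction of adjustment (decreasing $\rho_1$) is the one the monotonicity lemma permits.
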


This result has several consequences.
Indeed, when considering a specific family $\mathcal{S}(\alpha,A,c,\rho_2)$, we have, given any $M(\alpha,A,c,\rho_2)$ sets in  $\mathcal{S}(\alpha,A,c,\rho_2)$, that they have non-empty intersection.
This can be seen as there exists $\delta\in(0,1)$ with $\delta$ and $M(\alpha,A,c,\rho_2)$ satisfying the inequalities in \eqref{pattern_inequal_1}, and hence also \eqref{pattern_inequal_2}.

With the use of the above intersection results one can study intersections of winning sets with the dimensional estimates provided in \Cref{thm:winning-set-dimension}. Indeed, the intersection results provided in \cite{falconer2022intersections-ThicknessBasedGame} have been recently used in \cite{baker2025finitelybaseqexpansions} and \cite{yavicoli2025numbersomittingdigitscertain} to provide dimensional bounds on intersections of sets arising from certain number expansions.
We now provide some examples of our results applied to intersections of sets from \Cref{exa:RCO,exa:RCD}.

\begin{exmp}
    Using \Cref{cor:intersections}, we obtain that sets in $\mathcal{RCD}(U,V)$ and $\mathcal{RCO}(U,V,m,t)$ have large intersections, when the values of $U$ and $V$ are sufficiently large, and $m$ and $t$ are chosen appropriately.
    Using numerical methods, we obtain the following.
    \begin{itemize}
    \itemsep0.5em
        \item If $U=2^{37}$ and $V=2^{36}$, then for all $F_1,F_2\in \mathcal{RCD}(U,V)$, $E_1\in \mathcal{RCO}(U,V,1,2)$ and $E_2\in\mathcal{RCO} (U,V,1,6)$, we have that $\dim_H(F_1\cap F_2\cap E_1\cap E_2) \geq1.999993$.
        \item If $U=2^{36}$ and $V=2^{40}$, then for all $F\in \mathcal{RCD}(U,V)$ and $E\in\mathcal{RCO}(U,V,1,1)$, we have that
        $\dim_H(F\cap E)\geq 1.999997$.
        \item If $U=425$ and $V=365$, then for all $F\in \mathcal{RCO}(U,V,10,3)$ and $E\in\mathcal{RCO}(U,V,1,2)$, we have that
        $\dim_H(F\cap E)\geq 1.99998$.
    \end{itemize}
\end{exmp}

\section*{Acknowledgements}

The authors thank Ricky Hutchins for valuable discussions.
RAH thanks the Engineering and Physical Sciences Research Council (EPSRC) Doctoral Training Partnership and the University of Birmingham for financial support.
AM and TS were supported by EPSRC grant EP/Y023358/1, and TS was additioanlly supported by EPSRC grant UKRI266. AM also acknowledges support from an EPSRC Doctoral Prize Fellowship, funded by the Open University's Doctoral Training Partnership grant EP/T518165/1.

\bibliographystyle{abbrv}
\bibliography{Ref.bib}


\appendix

\section{Proofs of Theorem~\ref{thm:winning-set-dimension} auxiliary results}\label{Appendix}

Here we provide the proofs of the technical auxiliary results used in the proof of \Cref{thm:winning-set-dimension}.
We split the appendix into two parts.
\Cref{appendixA1} contains the proof of the dimensional estimate for the set $F$ constructed in the proof of \Cref{thm:winning-set-dimension} (see \Cref{Hausdorff}) and \Cref{appendixA2} is devoted to the proof of \Cref{claim:claim_B}. 
We utilise the definitions and notation from \Cref{sec:winng-sets-HD} throughout.

\subsection{Strongly tree-like sets and dimensional estimate for the set $F$}\label{appendixA1}

    In this section, we prove the lower bound on the Hausdorff dimension of the set $F$ constructed in the proof of \Cref{thm:winning-set-dimension}.
    This is the content of \Cref{Hausdorff}.
    To do this, we use a result of \cite[Section 2.4]{MR2581371} on a class of sets that includes the set $F$.

    \begin{defin}\label{defn:stl}
        A countable family $\mathcal{A}$ of compact subsets of $\mathbb{R}^n$ with positive Lebesgue measure is called \emph{strongly tree-like} with respect to the Lebesgue measure if $\mathcal{A}$ is the union of finite sub-collections $\mathcal{A}_k$, where $\mathcal{A}_0=\{A_0\}$ and the following are satisfied:
        \begin{itemize}
            \item[(TL1)] For all $k\in \N$ and $A,B\in \mathcal{A}_k$ with $A \neq B$, $\mathcal{L}^n(A\cap B)=\emptyset$.
            \item[(TL2)] For all $k\in\N$ and $B\in \mathcal{A}_k$, there exists $A\in \mathcal{A}_{k-1}$ such that $B\subseteq A$.
            \item[(TL3)] For all $k\in\N$ and $A\in \mathcal{A}_k$, there exists $B\in \mathcal{A}_{k+1}$ such that $B\subseteq A$.
            \item[(STL)] $\lim_{k\rightarrow\infty}\max_{A\in \mathcal{A}_k}\{\operatorname{diam}(A)\}=0$.
        \end{itemize}
    The non-empty limit set of $\mathcal{A}$, referred to as a \emph{strongly tree-like set}, is defined as
    \begin{align*}
    A_{\infty}=\bigcap_{k\in\N}\bigcup_{A\in \mathcal{A}_k}A.
    \end{align*}
    \end{defin}
    
    In the paper \cite{MR2581371}, strongly tree-like sets are defined for a larger class of measures.
    However, the above formulation with respect to Lebesgue measure is sufficient for our purposes.
    In \cite[Lemma~2.8]{MR2581371}, a lower bound on the Hausdorff dimension was obtained.
    In our setting, this takes the following form.

    \begin{lem}\label{lem:stl-Hausdorff-bound}
        Letting $A_{\infty}$ be a strongly tree-like set, and following the notation of \Cref{defn:stl}, the following holds:
        \begin{align}\label{eq: restatedDimEstimate}
            \dim_H(A_\infty)\geq n-\limsup_{k\rightarrow \infty} \frac{\sum\limits_{i=0}^k\log\left(\min\limits_{B\in\mathcal{A}_i}\left\{ \mathcal{L}^n\left(\bigcup\limits_{A\in\mathcal{A}_{i+1} }A\cap B\right) \left( \mathcal{L}^n(B) \right)^{-1} \right\}\right)}{\log(\max_{A\in\mathcal{A}_k}\{\operatorname{diam}(A)\})}.
        \end{align}
    \end{lem}
    Using this result, we obtain the dimension bound used in the proof of \Cref{thm:winning-set-dimension}.
    
    \begin{lem}\label{Hausdorff}
    Assuming the setting of \Cref{thm:winning-set-dimension}, and
    letting $K_{1}$ and $F$ be as in \eqref{eq:main_result_constant_K} 
    and \eqref{eq:sub_cantor_set} respectively,
        \begin{align}\label{eq:Hausdorf_Lemma}
        \dim_{H}(F)\geq   n-K_1\frac{\alpha}{\left\lvert\log(\max\{\beta_{jj}\})\right\rvert}.
        \end{align}
    \end{lem}

    \begin{proof}
    We observe that the set $F$ is a strongly tree-like set with respect to the Lebesgue measure.
    This can be seen by noting that each set in our tree construction is of positive Lebesgue measure and for each $\mathcal{B}_k$, two distinct elements of $\mathcal{B}_k$ can intersect only along their boundaries (satisfying (TL1)).
    The conditions of (TL3) and (STL) are additionally satisfied, by construction.

    To verify (TL2), we observe that $\mathcal{B}_{k+1}\subset \bigcup_{T\in \mathcal{B}_{k}}D_{(k+1)N+1}(T)$. 
    Therefore, for any element $T'\in \mathcal{B}_{k+1}$, there exists $T\in \mathcal{B}_k$ such that $T'\in D_{(k+1)N+1}(T)$. 
    The condition (TL2) follows by the definition of the collection $D_{(k+1)N+1}(T)$.
    
    Letting $K$ be as in \eqref{eq:the_constant_K}, applying \Cref{lem:stl-Hausdorff-bound} gives the following:
        \begin{align}\label{eq:orginal_haus_bound_messy}
        \text{dim}_H(F)&\geq n-\limsup\limits_{k\rightarrow \infty}\frac{(k+1)\log{(K\prod_{j=1}^n\beta_{jj}^N)}}{\log{\big( \max\{\beta_{jj}\}^{kN+1}2\rho \big)}}.
        \end{align}
    Observe that
        \begin{align*}
        &n-\limsup\limits_{k\rightarrow \infty}\frac{(k+1)\log{(K\prod_{j=1}^n\beta_{jj}^N)}}{\log{\big( \max\{\beta_{jj}\}^{kN+1}2\rho \big)}}=n-\limsup\limits_{k\rightarrow \infty}\frac{(k+1)\log{(K\prod_{j=1}^n\beta_{jj}^N)}}{(kN+1)\log{\big( \max\{\beta_{jj}\}}\big)}\\
        &=n-\frac{\log{(K\prod_{j=1}^n\beta_{jj}^N)}}{N\log{\big( \max\{\beta_{jj}\} \big)}} = n-\frac{\log(\prod_{j=1}^n\beta_{jj})}{\log\left( \max\{\beta_{jj}\} \right)}-\frac{\log\left(K\right)}{N\log(\max\{\beta_{jj}\})}\\
        &\geq n-\frac{\log(\prod_{j=1}^n\beta_{jj})}{\log( \max\{\beta_{jj}\} )}-\frac{\log((\prod_{j=1}^n\beta_{jj}^{-N})( \prod_{j=1}^n( 3^{-1} (1-5\beta_{jj}^N))-8^n (1+2^{2n+1})\delta))}{N\log(\max\{\beta_{jj}\})}\\
        &= n-\frac{\log(\prod_{j=1}^n\beta_{jj})}{\log( \max\{\beta_{jj}\})}-\frac{\log(( \prod_{j=1}^n( 3^{-1} (1-5\beta_{jj}^N))-8^n (1+2^{2n+1})\delta))}{N\log(\max\{\beta_{jj}\})}+\frac{\log(\prod_{j=1}^n\beta_{jj})}{\log(\max\{\beta_{jj}\})}.
        \end{align*}
        Since both $\max\{\beta_{jj}\}$ and $\prod_{j=1}^n( 3^{-1}(1-5\beta_{jj}^N))-8^n (1+2^{2n+1})\delta$ are positive and less than one, we conclude
        \begin{align*}
             \text{dim}_H(F) &\geq n-\left|\frac{\log( \prod_{j=1}^n ( 3^{-1}(1-5\beta_{jj}^N))-8^n (1+2^{2n+1})\delta)}{N\log(\max\{\beta_{jj}\})}\right|\\
            &\geq n-\frac{2\alpha\left|\log( \prod_{j=1}^n( 3^{-1} (1-5\beta_{jj}^N))-8^n (1+2^{2n+1})\delta)\right|}{\delta\left|\log(\max\{\beta_{jj}\})\right|}
            =n-K_1\frac{\alpha}{\left|\log(\max\{\beta_{jj}\})\right|}. \qedhere
        \end{align*}
    \end{proof}

We observe that the bound in \eqref{eq:Hausdorf_Lemma} is more useful than \eqref{eq:orginal_haus_bound_messy} in our computations.  Moreover, the difference between the bound in \eqref{eq:Hausdorf_Lemma} and \eqref{eq:orginal_haus_bound_messy} is small for small values of $\alpha$ or when $\prod_{j=1}^n\beta_{jj}$ is small.

\subsection{Proof of \Cref{claim:claim_B}}\label{appendixA2}

The aim of this section is to present a proof of \Cref{claim:claim_B} from the proof of \Cref{thm:winning-set-dimension}, which for convenience we restate below.

\setcounter{claimA}{1}

\begin{claimA}
        \textit{For al}l $k\in\N_0$ \textit{and} $T\in\mathcal{D}_{kN+1}'$, 
        \begin{align}\label{eq:ClaimB}
            \#\left( \mathcal{D}_{(k+1)N+1}(T)\cap \mathcal{D}_{(k+1)N+1}' \right)
            \geq \left(\prod_{j=1}^n\beta_{jj}^{-N}\right)\left( 3^{-n}\prod_{j=1}^{n} (1-5\beta_{jj}^N) -8^n (1+2^{2n+1})\delta\right) > 0.
        \end{align}
\end{claimA}

To aid in the readability of the proof of \Cref{claim:claim_B}, we provide a summary of the main steps in the proof.
\begin{enumerate}
\item We use a counting argument to obtain a lower bound on the size of $\mathcal{D}_{(k+1)N+1}(T)$.
This is achieved by defining a set $Q$ whose points define a subset of $\mathcal{D}_{(k+1)N+1}(T)$.
That this set defines a subset of $\mathcal{D}_{(k+1)N+1}(T)$ is proved in \Cref{claim:claim_1}.
Following this, we provide a lower bound for the size of this subset and hence a lower bound on the size of the set $\mathcal{D}_{(k+1)N+1}(T)$.
This is the content of \Cref{claim:claim_2}.
\item We prove an upper bound on $\#(\mathcal{D}_{(k+1)N+1}(T)\setminus\mathcal{D}_{(k+1)N+1}')$ of the form
    \begin{align}\label{eq:summaryproofofClaimB_eq1}
    \#(\mathcal{D}_{(k+1)N+1}(T)\setminus\mathcal{D}_{(k+1)N+1}')\leq \sum_{t=1}^{(k+1)N} S_t,
    \end{align}
where heuristically $S_t$ tells how many possible plays of Player I are affected by Player II's deletion from the $t$-th level.
\item We split the sum in the right-hand-side of \eqref{eq:summaryproofofClaimB_eq1} into two sums, one considering deletion sets which have occurred early in the game, and the other considering more recent plays. We then obtain upper bounds for each of these sums. To do this we utilise \Cref{Claim (A)}, \Cref{claim:claim_3} and \Cref{claim:claim_4} stated below.
%
\item The proof concludes by taking the difference of $\#\mathcal{D}_{(k+1)N+1}(T)$ and $\#(\mathcal{D}_{(k+1)N+1}(T)\setminus\mathcal{D}_{(k+1)N+1}')$, appropriately bounding and rearranging into the form given in the statement of \Cref{claim:claim_B}.
\end{enumerate}

Before presenting the proof of \Cref{claim:claim_B} we state and prove \Cref{Claim (A)}.
   
\begin{lem}\label{Claim (A)}
    Let $k\in\N$ be such that $k \not\equiv 1 \operatorname{mod} N$ and let $T\in \mathcal{E}_{k+1}$.
    If $z$ is the element of $E_k$ such that $\pi_{k}(T)=A^{k}(B[0,\rho])+z$, then $T\subseteq 2^{-1} A^{k}(B[0,\rho])+z$.
\end{lem}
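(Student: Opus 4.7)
The plan is to exploit the product structure of the lattice $E_k$ and the decoupling of the $d_\infty$ metric across coordinates. The hypothesis $k\not\equiv 1\pmod{N}$ ensures that $\pi_k$ is defined by unrestricted minimisation of distance over $\mathcal{E}_k$, rather than by the $\mathcal{D}_k$-constrained variant.

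Writing $T = A^{k+1}(B[0,\rho]) + z'$ for some $z' \in E_{k+1}$, I would first observe that the containment $A^k(B[0,\rho]) + z \supseteq T$ is, under $d_\infty$, equivalent to the coordinate-wise inequalities $|z^{(j)} - z'^{(j)}| \leq \rho\beta_{jj}^k (1-\beta_{jj})$ for each $j\in \{1,\dots,n\}$. Since $E_k$ is a product lattice with spacing $(\rho/2)\beta_{jj}^k$ in the $j$-th coordinate, the coordinate-wise closest point of $E_k$ to $z'$ satisfies $|z^{(j)} - z'^{(j)}| \leq (\rho/4)\beta_{jj}^k$, which lies within the containment bound because $\beta_{jj} < 1/5 < 3/4$. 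Because both the containment constraints and $d_\infty = \max_j|\cdot|$ decouple over the product lattice, this coordinate-wise closest point is a $d_\infty$-minimiser of $d_\infty(z, z')$ among valid candidates, and may therefore be taken as the centre of $\pi_k(T)$.

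The desired conclusion $T \subseteq 2^{-1}A^k(B[0,\rho]) + z$ is, by an analogous coordinate-wise reduction, equivalent to $|z^{(j)} - z'^{(j)}| + \rho\beta_{jj}^{k+1} \leq (\rho/2)\beta_{jj}^k$, that is, $|z^{(j)} - z'^{(j)}| \leq (\rho/2)\beta_{jj}^k(1 - 2\beta_{jj})$. Combining this with the bound $|z^{(j)} - z'^{(j)}| \leq (\rho/4)\beta_{jj}^k$ obtained above, it reduces to the elementary inequality $1/2 \leq 1 - 2\beta_{jj}$, equivalent to $\beta_{jj} \leq 1/4$; this again follows from $\beta_{jj} < 1/5$.

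The only subtle point is the decoupling of the $d_\infty$-minimisation. Any apparent difficulty dissolves once one notes that, for a product lattice measured in $d_\infty$, picking the coordinate-wise closest lattice point realises the minimum of $d_\infty(z,z')$ subject to the containment constraint, so the tie-breaking rule implicit in the definition of $\pi_k$ can consistently be resolved in this manner, thereby guaranteeing the coordinate-wise bound $|z^{(j)} - z'^{(j)}| \leq (\rho/4)\beta_{jj}^k$ used above.
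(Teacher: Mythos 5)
Your proof is correct and follows the same coordinate-wise route as the paper's: both reduce the claim to the half-spacing bound $\lvert z^{(j)} - z'^{(j)} \rvert \le (\rho/4)\beta_{jj}^{k}$ for the lattice $E_{k}$, and then verify the elementary inequality $\beta_{jj} \le 1/4$ coordinate by coordinate. If anything you are slightly more careful than the paper: you explicitly argue that the coordinate-wise closest lattice point is a legitimate $d_{\infty}$-minimiser satisfying the containment constraint, and hence an admissible realisation of the tie-breaking in the definition of $\pi_{k}$, whereas the paper simply asserts that $w^{(j)}$ minimises $\lvert w^{(j)}-\beta_{jj}y^{(j)}\rvert$ without flagging that a non-coordinate-wise tie-break among $d_{\infty}$-minimisers could in principle fail the required per-coordinate bound.
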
    
    \begin{proof}
        For each $T\in \mathcal{E}_{k+1}$, we have that 
        \begin{align}\label{eq:defn_of_r}
        T=A^{k+1}(B[0,\rho])+(2^{-1}\rho\beta_{11}^{k+1}r_{1},\dots,2^{-1} \rho\beta_{nn}^{k+1}r_{n})
        \end{align}
        for some $(r_{1},\dots,r_{n})\in \Z^n$.
        Moreover, $\pi_k(T)=A^{k}(B[0,\rho])+(2^{-1} \rho\beta_{11}^{k}w_{1},\dots,2^{-1}\rho\beta_{nn}^{k}w_{n})$ for some $(w_{1},\dots,w_{n})\in \Z^n$, with $w_{j}$ chosen so that $\lvert w_{j} - \beta_{jj} r_{j} \rvert$ is minimised for all $j \in \{1,\ldots,n\}$.
        This in tandem with the definition of $\pi_{k}$ and the sets $\mathcal{E}_k$ and $\mathcal{E}_k+1$, yields
        $\lvert 2^{-1} \rho \beta_{jj}^{k}w_{j}-2^{-1}\rho \beta_{jj}^{k+1}r_{j} \rvert \leq 4^{-1} \rho \beta_{jj}^{k}$.
        Thus, for $x = (x_{1},\ldots,x_{n}) \in T$ and $j \in \{1,\ldots,n\}$,
        \[
            \left\lvert x_{j}-\frac{\rho}{2}\beta_{jj}^{k}w_{j}\right\rvert \leq \left\lvert x_{j}-\frac{\rho}{2}\beta_{jj}^{k+1}r_{j}\right\rvert + \left\lvert \frac{\rho}{2}\beta_{jj}^{k}w_{j}-\frac{\rho}{2}\beta_{jj}^{k+1}r_{j} \right\rvert \leq \rho\beta_{jj}^{k+1} + \frac{\rho}{4}\beta_{jj}^{k}\leq 
            \frac{\rho}{2}\beta_{jj}^{k}.\qedhere
        \]
    \end{proof}

We now present the proof of \Cref{claim:claim_B}.

    \begin{proof}[Proof of \Cref{claim:claim_B}]
    Let $k\in\N_0$, let $T=A^{kN+1}(B[0,\rho])+z\in\mathcal{D}_{kN+1}'$ and let $r = (r_1,\ldots,r_n) \in \Z^n$ be such that $z=(3\rho \beta_{11}^{kN+1}r_1,...,3\rho \beta_{nn}^{kN+1}r_n)$.
    We first find a lower bound for $\#(\mathcal{D}_{(k+1)N+1}(T))$ and then find an upper bound for $\#(\mathcal{D}_{(k+1)N+1}(T)\setminus\mathcal{D}_{(k+1)N+1}')$.

    To obtain a lower bound for $\#\mathcal{D}_{(k+1)N+1}(T)$, we compute the cardinality of a set $Q\subseteq\R^n$, dependent on $T$, and show that for each point $x\in Q$ we have that $A^{(k+1)N+1}(B[0,\rho])+x$ is contained in $\mathcal{D}_{(k+1)N+1}(T)$.
    To this end, for each $j \in \{1,\ldots,n\}$, let $\gamma_j = \beta_{jj}^{-N}/6 - 1/3$, let $H \colon \R \rightarrow \Z$ be the function that maps each real number to its closest integer, rounding down when the fractional part is $1/2$, and define
    \begin{align*}
        Q= \{ 
            (x_{1}, \dots, x_{n})
            \in \R^n : \; & x_j = 3 \rho \beta_{jj}^{(k+1)N+1} ( H [ \beta_{jj}^{-N} r_n ] + l_j )\\
            &\text{for some} \; l_j \in \Z \; \text{satisfying} \; |\beta_{jj}^{-N}r_j-H[\beta_{jj}^{-N}r_j]-l_j|\leq \gamma_j \}.
    \end{align*}
    
    \begin{claim}\label{claim:claim_1}
    $\{A^{(k+1)N+1}(B[0,\rho])+x : x \in Q\}\subseteq \mathcal{D}_{(k+1)N+1}(T)$.
    \end{claim}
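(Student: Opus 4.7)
The plan is to unpack the two conditions hidden in the definition of $\mathcal{D}_{(k+1)N+1}(T)$ and verify each for an arbitrary $y\in Q$. By construction,
\[
\mathcal{D}_{(k+1)N+1}(T)=\{T'\in\mathcal{D}_{(k+1)N+1}:T'\subseteq 2^{-1}A^{kN+1}(B[0,\rho])+z\},
\]
so it suffices to show, for every $y\in Q$, first that $A^{(k+1)N+1}(B[0,\rho])+y\in\mathcal{D}_{(k+1)N+1}$, and second that this ball is contained in $2^{-1}A^{kN+1}(B[0,\rho])+z$. The first is essentially a matter of reading off the definition of $Q$: each coordinate $y_j=3\rho\beta_{jj}^{(k+1)N+1}(H[\beta_{jj}^{-N}r_j]+l_j)$ is an integer multiple of $3\rho\beta_{jj}^{(k+1)N+1}$, so (after the implicit translation by the centre of $U_1$ inherent in the definition of $D_{(k+1)N+1}$) one has $y\in D_{(k+1)N+1}$.

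The second condition reduces, via the triangle inequality on the coordinates, to the requirement
\[
|y_j-z_j|+\rho\beta_{jj}^{(k+1)N+1}\le \tfrac{\rho}{2}\beta_{jj}^{kN+1}\quad\text{for each }j\in\{1,\dots,n\},
\]
i.e.\ $|y_j-z_j|\le \rho\beta_{jj}^{kN+1}\bigl(\tfrac12-\beta_{jj}^{N}\bigr)$. I will establish this by direct substitution. Since $z_j=3\rho\beta_{jj}^{kN+1}r_j$ and $y_j=3\rho\beta_{jj}^{(k+1)N+1}\bigl(H[\beta_{jj}^{-N}r_j]+l_j\bigr)$, factoring out $3\rho\beta_{jj}^{(k+1)N+1}$ gives
\[
|y_j-z_j|=3\rho\beta_{jj}^{(k+1)N+1}\bigl|H[\beta_{jj}^{-N}r_j]+l_j-\beta_{jj}^{-N}r_j\bigr|.
\]
The defining constraint on $l_j$ in the set $Q$ is precisely $|\beta_{jj}^{-N}r_j-H[\beta_{jj}^{-N}r_j]-l_j|\le\gamma_j$, so the bound $|y_j-z_j|\le 3\rho\beta_{jj}^{(k+1)N+1}\gamma_j$ is immediate; substituting $\gamma_j=\beta_{jj}^{-N}/6-1/3$ collapses the right-hand side to exactly $\rho\beta_{jj}^{kN+1}(\tfrac12-\beta_{jj}^{N})$, which is what is required.

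There is no real obstacle here beyond bookkeeping: the value of $\gamma_j$ has been reverse-engineered so that the worst-case deviation of $y_j$ from $z_j$, inflated by the radius $\rho\beta_{jj}^{(k+1)N+1}$ of the new ball, exactly fills the half-ball of radius $\tfrac{\rho}{2}\beta_{jj}^{kN+1}$. The assumption $\beta_{jj}\in(0,1/5)$ is used implicitly here to ensure $\gamma_j>0$ (so that $Q$ is non-empty), since $\beta_{jj}^{-N}\ge 5>2$ forces $\gamma_j=\beta_{jj}^{-N}/6-1/3>0$; this same positivity is what later feeds the cardinality estimate in the remainder of Claim~B. Once both containments are in hand, the inclusion asserted in Claim~1 follows.
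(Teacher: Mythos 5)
Your proof is correct and takes essentially the same route as the paper's: both reduce the inclusion $A^{(k+1)N+1}(B[0,\rho])+y \subseteq 2^{-1}A^{kN+1}(B[0,\rho])+z$ to the coordinatewise bound $|y_j-z_j|\leq 3\rho\beta_{jj}^{(k+1)N+1}\gamma_j=\tfrac{\rho}{2}\beta_{jj}^{kN+1}-\rho\beta_{jj}^{(k+1)N+1}$ and then apply the triangle inequality to each coordinate. You additionally make explicit the membership $A^{(k+1)N+1}(B[0,\rho])+y\in\mathcal{D}_{(k+1)N+1}$ (reading it off from the integer-multiple form of $y_j$), a step the paper's own proof of this claim leaves implicit.
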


    \begin{proof}[Proof of \Cref{claim:claim_1}]
        Observe, by construction, that $Q \subseteq D_{(K+1)N + }$. Let $x=(x_1,...,x_n) \in Q$ be given.
        By definition of $Q$, for $j \in \{1,\ldots,n\}$, there exists $l_j \in \Z$ with $x_j=3\rho \beta_{jj}^{(k+1)N+1}\left(H\left[ \beta_{jj}^{-N} r_i \right]+l_j\right)$ and so
        \begin{align*}
            \left\lvert z_j-x_j \right\rvert
            &=3\rho \beta_{jj}^{(k+1)N+1} \left\lvert \beta_{jj}^{-N} r_j-H[\beta_{jj}^{-N} r_j]-l_j \right\rvert\\
            &\leq 3\rho \beta_{jj}^{(k+1)N+1}\left(\frac{1}{6}\beta_{jj}^{-N}-\frac{1}{3}\right)=\frac{1}{2}\beta_{jj}^{kN+1}\rho- \beta_{jj}^{(k+1)N+1}\rho.
        \end{align*}
        Therefore, $\lvert z_j - (x_j+v_j) \rvert \leq \lvert v_j \rvert +\lvert z_j-x_j \rvert \leq\frac{1}{2}\beta_{jj}^{kN+1}\rho$, for all $v = (v_{1}, \dots, v_{n}) \in A^{(k+1)N+1} (B[0,\rho])$ and $j \in \{1,\ldots,n\}$.
        Hence, $x+v \in 2^{-1} A^{kN+1} (B[0,\rho])+z$ and so 
        \[
        A^{(k+1)N+1} (B[0,\rho]) + x\subseteq 2^{-1} A^{kN+1} (B[0,\rho]) + z.\qedhere
        \]
    \end{proof}

    \begin{claim}\label{claim:claim_2}
    $\displaystyle \# \mathcal{D}_{(k+1)N+1} (T) \geq \# Q = \prod_{j=1}^{n} \left\lfloor 2\gamma_j \right\rfloor$.
    \end{claim}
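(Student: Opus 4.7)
The plan is to combine \Cref{claim:claim_1} with a coordinate-wise lattice count. First, I would verify that every $y \in Q$ corresponds to an element of $\mathcal{D}_{(k+1)N+1}$: for each coordinate, $H[\beta_{jj}^{-N}r_j] + l_j \in \Z$, so $y \in 3\rho A^{(k+1)N+1}(\Z^n) + y_0$ after absorbing Player~I's initial translate $y_0$. Together with \Cref{claim:claim_1}, this places $A^{(k+1)N+1}(B[0,\rho]) + y \subseteq 2^{-1} A^{kN+1}(B[0,\rho]) + z$, so the set belongs to $\mathcal{D}_{(k+1)N+1}(T)$. Distinct $y \in Q$ yield distinct translates, so the assignment $y \mapsto A^{(k+1)N+1}(B[0,\rho]) + y$ is an injection $Q \hookrightarrow \mathcal{D}_{(k+1)N+1}(T)$, giving $\#\mathcal{D}_{(k+1)N+1}(T) \geq \#Q$.

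To evaluate $\#Q$, I would exploit that the defining constraint is separable across coordinates, so $\#Q = \prod_{j=1}^n N_j$, where
\[
N_j = \#\{\, l_j \in \Z : |\beta_{jj}^{-N} r_j - H[\beta_{jj}^{-N} r_j] - l_j| \leq \gamma_j \,\}.
\]
Setting $\theta_j = \beta_{jj}^{-N} r_j - H[\beta_{jj}^{-N} r_j]$, the tie-breaking convention in the definition of $H$ forces $\theta_j \in (-1/2, 1/2]$, and $N_j$ becomes the number of integers in the closed interval $[\theta_j - \gamma_j, \theta_j + \gamma_j]$ of length $2\gamma_j$. A direct lattice count, uniform in $\theta_j$, then yields the stated value of $N_j$, and taking the product over $j$ gives the claimed expression for $\#Q$.

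The main, and very mild, technical point will be verifying that the interval is genuinely non-degenerate so that the count is non-vacuous. This follows from $\beta_{jj} \in (0, 1/5)$ together with $N = \lfloor \delta\alpha^{-1} \rfloor \geq 1$ (which is ensured by the first inequality in \eqref{eq:Thm-lower-bound-ass-1}), since these give $\beta_{jj}^{-N} \geq 5$ and hence $\gamma_j \geq 5/6 - 1/3 = 1/2 > 0$. Once this is in place, the rest is arithmetic, and the resulting product bound on $\#Q$ plugs directly into the estimate required for \Cref{claim:claim_B}.
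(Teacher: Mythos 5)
Your proposal is correct and takes essentially the same approach as the paper: reduce $\#Q$ to a separable, coordinate-wise count of integers $l_j$ satisfying $\lvert \theta_j - l_j \rvert \leq \gamma_j$, and bound that count below using $\gamma_j > 1/2$ (which follows from $\beta_{jj}^{-N} \geq 5$). The only additions are your explicit observation that $\theta_j \in (-1/2,1/2]$ and the spelled-out injection $Q \hookrightarrow \mathcal{D}_{(k+1)N+1}(T)$ via \Cref{claim:claim_1}, both of which the paper leaves implicit; the core lattice-counting step is identical.
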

    
    \begin{proof}[Proof of \Cref{claim:claim_2}]
        The first inequality follows from \Cref{claim:claim_1}, and so we focus on the second inequality.
        The cardinality of $Q$ equals the number of vectors $l = (l_1,\ldots,l_n) \in \Z^{n}$ with $\lvert \beta_{jj}^{-N} r_j - H[\beta_{jj}^{-N} r_j] - l_j \rvert \leq \gamma_j$ for all $j \in \{1,\ldots,n\}$.
        If $l_j \leq \beta_{jj}^{N} r_j - H[\beta_{jj}^{-N} r_j]$, then we require $l_j \geq \beta_{jj}^{-N} r_j - H[\beta_{jj}^{-N}] - \gamma_j$, which implies $l_j \in \left[ \beta_{jj}^{-N} r_j - H [\beta_{jj}^{-N} r_j] - \gamma_j, \, \beta_{jj}^{-N} r_j - H [\beta_{jj}^{-N} r_j] \right]$.
        Similarly, if $l_j > \beta_{jj}^{N} r_j - H[\beta_{jj}^{-N} r_j]$, then we require $l_j\in \left(\beta_{jj}^{-N} r_j - H [\beta_{jj}^{-N} r_j], \, \beta_{jj}^{-N} r_j - H [\beta_{jj}^{-N} r_j] + \gamma_j \right]$.
        Thus, the permissible values of $l_j$ are precisely those in the set $[ \beta_{jj}^{-N}z_j-H[\beta_{jj}^{-N}z_j] - \gamma_j, \beta_{jj}^{-N}z_j-H[\beta_{jj}^{-N}z_j] + \gamma_j ]\cap \Z$, the cardinality of which is bounded below by $\lfloor 2\gamma_j \rfloor$.
        Hence, we conclude that $\# Q \geq\prod_{j=1}^{n}  \lfloor 2\gamma_j \rfloor$.
    \end{proof}
        
 We now pursue an upper bound for $\#(\mathcal{D}_{(k+1)N+1}(T)\setminus\mathcal{D}_{(k+1)N+1}')$.
    Note that $\mathcal{D}_{(k+1)N+1}(T)\setminus\mathcal{D}_{(k+1)N+1}'$ is precisely the set of $T' \in \mathcal{D}_{(k+1)N+1}(T)$ such that $\phi_{(k+1)N+1} (T') > (\delta\prod_{j=1}^n\beta_{jj}^{(k+1)N+1})^c$.
    Thus,
    \begin{align*}
        \#\left( \mathcal{D}_{(k+1)N+1}(T)\setminus\mathcal{D}_{(k+1)N+1}' \right) \leq \sum_{T' \in \mathcal{D}_{(k+1)N+1} (T)} \min \left\{ 1, \frac{\phi_{(k+1)N+1} (T')}{\left(\delta\prod_{j=1}^n\beta_{jj}^{(k+1)N+1}\right)^c} \right\} \leq \sum_{t=1}^{(k+1)N} S_t,
    \end{align*}
    where, for each $t \in \{1,\ldots,(k+1)N\}$,
    \begin{align*}
        S_t = \sum_{T' \in \mathcal{D}_{(k+1)N+1} (T)} \,\sum_{(q_{i,t},y_{i,t}) \in \mathcal{A}_t^{*} (T')} \min \left\{ 1, \, \left( \prod_{j=1}^{n} \beta_{jj}^{q_{i,t}} \right)^c \left(\delta\prod_{j=1}^n\beta_{jj}^{(k+1)N+1}\right)^{-c} \right\}.
    \end{align*}
    We will bound separately the sums $\sum_{t=1}^{kN} S_t$ and $\sum_{t=kN+1}^{(k+1)N} S_t$.
    We first consider $\sum_{t=1}^{kN} S_t$.
    For all $T' \in \mathcal{D}_{(k+1)N+1} (T)$, we have that $T' \subseteq 2^{-1} A^{kN+1} (B[0,\rho]) + z$.
    Thus, by \Cref{claim:claim_A}, we have that $\pi_{(k+1)N+1,kN+1} (T') = T$, and so, for $t \in \{1, \dots, kN \}$,
        \begin{align*}
        \mathcal{A} (\pi_{(k+1)N+1,t} (T'),\ldots,\pi_{(k+1)N+1,1} (T')) \subseteq \mathcal{A} (\pi_{kN+1,t} (T),\ldots,\pi_{(kN+1,1} (T)).
        \end{align*}
    Hence, for all $t \leq kN$, we obtain the following upper bound for $S_t$:
    \begin{align}
    \begin{split}\label{EQ:small-St-bound}
        S_t
        &\leq \sum\limits_{(q_{i,t},y_{i,t})\in \mathcal{A}^*_t(T)} \,\,\sum_{\substack{T'\in\mathcal{D}_{(k+1)N+1}(T): \\ T'\cap (A^{q_{i,t}}(B[0,\rho])+y_{i,t})\neq\emptyset}}\min \left\{ 1, \, \left( \prod_{j=1}^{n} \beta_{jj}^{q_{i,t}} \right)^c \left(\delta\prod_{j=1}^n\beta_{jj}^{(k+1)N+1}\right)^{-c} \right\}\\
        &\leq \sum\limits_{(q_{i,t},y_{i,t})\in \mathcal{A}^*_t(T)} n (T,q_{i,t},y_{i,t}) \, \min \left\{ 1, \left(\prod\limits_{j=1}^n \beta_{jj}^{q_{i,t}}\right)^c \left(\delta\prod\limits_{j=1}^n\beta_{jj}^{(k+1)N+1}\right)^{-c} \right\},
    \end{split}
    \end{align}
    where
    \begin{align}\label{EQ:n-def}
        n (T,q_{i,t},y_{i,t}) = \# \left\{ T' \in \mathcal D_{(k+1)N+1} (T) : T' \cap (A^{q_{i,t}} (B[0,\rho]) + y_{i,t}) \neq \varnothing \right\}.
    \end{align}

    We now consider the case $kN+1\leq t< (k+1)N+1$.
    If $T' \in \mathcal{D}_{(k+1)N+1} (T)$ and $z$ is the element in $E_k$ such that $\pi_{(k+1)N+1,t} (T') = A^{t} (B[0,\rho]) + z$, then $T' \subseteq 2^{-1} A^{t} (B[0,\rho]) + z$ by \Cref{Claim (A)}.
    Therefore, for all $T'\in \mathcal{D}_{(k+1)N+1}(T)$, we have that $T'\in \mathcal{D}_{(k+1)N+1}(\pi_{(k+1)N+1,t} (T'))$, and so
    \begin{align*}
    \mathcal{D}_{(k+1)N+1}(T)&\subseteq \bigcup_{T'\in\mathcal{D}_{(k+1)N+1}(T)} \mathcal{D}_{(k+1)N+1}(\pi_{(k+1)N+1,t}(T'))\subseteq \bigcup_{\substack{T''\in\mathcal{E}_t:  T''\subseteq T}} \mathcal{D}_{(k+1)N+1}(T'').
    \end{align*}
    Since, for $T' \in \mathcal{D}_{(k+1)N +1}(T)$, we have $\pi_{(k+1)N+1,t}(T')=T''$ for some $T''\in \mathcal{E}_t$ with $T''\subseteq T$, we obtain the following upper bound for $S_t$:
    \begin{align}
    \begin{split}\label{EQ:big-St-bound}
        S_t &\leq \sum_{\substack{T''\in\mathcal{E}_t\\ 
        T''\subseteq T}}\,\,\sum_{T'\in\mathcal{D}_{(k+1)N+1}(T'')} \,\,\sum\limits_{\substack{(q_{i,t},y_{i,t})\in \mathcal{A}^*_t(T''): \\ (A^{q_{i,t}}(B[0,\rho])+y_{i,t})\cap T'\neq \emptyset}}\min \left\{ 1, \, \left(\prod\limits_{j=1}^n \beta_{jj}^{q_{i,t}}\right)^c \left(\delta\prod\limits_{j=1}^n\beta_{jj}^{(k+1)N+1}\right)^{-c}  \right\}\\
        &\leq \sum_{\substack{T''\in\mathcal{E}_t\\ 
        T''\subseteq T}}\,\,\sum_{(q_{i,t},y_{i,t})\in \mathcal{A}^*_t(T'')} \,\,\sum\limits_{\substack{T'\in\mathcal{D}_{(k+1)N+1}(T''): \\ (A^{q_{i,t}}(B[0,\rho])+y_{i,t})\cap T'\neq \emptyset}}\min \left\{ 1, \, \left(\prod\limits_{j=1}^n \beta_{jj}^{q_{i,t}}\right)^c \left(\delta\prod\limits_{j=1}^n\beta_{jj}^{(k+1)N+1}\right)^{-c}  \right\}\\
        &\leq \sum_{\substack{T''\in\mathcal{E}_t: 
        T''\subseteq T}}\,\,\sum_{\substack{(q_{i,t},y_{i,t}) \in \mathcal{A}^*_t(T'')}} n(T'',q_{i,t},y_{i,t}) \, \min \left\{ 1, \left(\prod\limits_{j=1}^n \beta_{jj}^{q_{i,t}}\right)^c \left(\delta\prod\limits_{j=1}^n\beta_{jj}^{(k+1)N+1}\right)^{-c} \right\},
    \end{split}
    \end{align}
    where $n (T'',q_{i,t},y_{i,t})$ is as defined in \eqref{EQ:n-def}.
    It remains to find appropriate upper bounds for $n(T,q_{i,t},y_{i,t})$ when $t<kN+1$, and $n (T'',q_{i,t},y_{i,t})$ when $t \in \{kN+1,\dots,(k+1)N\}$ and $T''\in \mathcal{E}_t$.
    We combine these two bounds into the following claim.

    \begin{claim}\label{claim:claim_3}
    For $t \in \{kN+1,\ldots,(k+1)N\}$, $m \in \{1, \dots, t\}$, $T'' \in \mathcal{E}_t$ and $(q_{i,m},y_{i,m}) \in \mathcal{A}_{m}^{*}(T'')$,
    \begin{align*}
        n (T'',q_{i,m},y_{i,m}) \leq 4^n\left(\prod_{j=1}^n\left( \beta_{jj}^{q_{i,m}}+ \beta_{jj}^{(k+1)N+1} \right)\right)\left(\prod\limits_{j=1}^n \beta_{jj}^{(k+1)N+1}\right)^{-1}.
    \end{align*}
    \end{claim}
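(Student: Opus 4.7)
The plan is to reduce Claim~3 to a one-dimensional lattice-point count, carried out coordinate by coordinate. First, I would write every $T' \in \mathcal{D}_{(k+1)N+1}(T'')$ as $T' = A^{(k+1)N+1}(B[0,\rho]) + z'$ with $z' \in D_{(k+1)N+1} = 3\rho A^{(k+1)N+1}(\Z^n) + y$; in particular, along the $j$-th coordinate the admissible centres $z'_j$ form a one-dimensional lattice with spacing $s_j := 3\rho\beta_{jj}^{(k+1)N+1}$. Since we want only an upper bound for $n(T'', q_{i,m}, y_{i,m})$, I can safely drop the containment $T' \subseteq 2^{-1}A^t(B[0,\rho])+z''$ in the definition of $\mathcal{D}_{(k+1)N+1}(T'')$ and count instead all $z'\in D_{(k+1)N+1}$ for which $T'$ meets $A^{q_{i,m}}(B[0,\rho])+y_{i,m}$.

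Second, because $A^q(B[0,\rho])+z$ is the axis-aligned box with half-widths $\rho\beta_{jj}^q$ in coordinate $j$, and $d_\infty$-balls are characterised coordinate-wise, the two boxes $T'$ and $A^{q_{i,m}}(B[0,\rho])+y_{i,m}$ intersect precisely when $\lvert z'_j - y_{i,m,j}\rvert \leq \rho\bigl(\beta_{jj}^{q_{i,m}} + \beta_{jj}^{(k+1)N+1}\bigr)$ for every $j \in \{1,\ldots,n\}$. Thus, along the $j$-th axis, the feasible $z'_j$ lie in an interval of length $L_j := 2\rho\bigl(\beta_{jj}^{q_{i,m}} + \beta_{jj}^{(k+1)N+1}\bigr)$, and the standard bound for lattice points in an interval gives that the number of such $z'_j$ is at most $\lfloor L_j/s_j\rfloor + 1 \leq L_j/s_j + 1$.

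Third, a short numerical estimate shows that
$$L_j/s_j + 1 \;=\; \tfrac{2}{3}\,\beta_{jj}^{q_{i,m}-(k+1)N-1} + \tfrac{5}{3} \;\leq\; 4\,\frac{\beta_{jj}^{q_{i,m}} + \beta_{jj}^{(k+1)N+1}}{\beta_{jj}^{(k+1)N+1}},$$
which holds trivially since $\beta_{jj}^{q_{i,m}-(k+1)N-1}>0$. Because the constraint on $z'_j$ is independent across $j$ and the lattice $D_{(k+1)N+1}$ is a product lattice, multiplying the per-coordinate bounds over $j \in \{1,\ldots,n\}$ yields the desired inequality. The only substantive ingredients are the coordinate-wise characterisation of intersection of $d_\infty$-boxes and the short numerical estimate above, both of which are elementary; in particular no use is made of the game rules or of the specific position of $t$ within $\{kN+1,\ldots,(k+1)N\}$, which is why the same bound serves in both of the estimates \eqref{EQ:small-St-bound} and \eqref{EQ:big-St-bound}.
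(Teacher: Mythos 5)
Your proof is correct, and it takes a genuinely different route from the paper. The paper's argument is a volume-packing estimate: it uses that the elements of $\mathcal{D}_{(k+1)N+1}$ are pairwise disjoint with Lebesgue measure $\prod_j 2\rho\beta_{jj}^{(k+1)N+1}$, that any such element meeting $A^{q_{i,m}}(B[0,\rho])+y_{i,m}$ is contained in an axis-aligned box centred at $y_{i,m}$ of side $4\rho(\beta_{jj}^{q_{i,m}}+\beta_{jj}^{(k+1)N+1})$, and then divides the big-box volume by the per-element volume. Your argument instead counts lattice centres directly: since $D_{(k+1)N+1}$ is a product lattice with per-axis spacing $3\rho\beta_{jj}^{(k+1)N+1}$, and the $d_\infty$-geometry forces the feasible centres into a per-axis window of length $2\rho(\beta_{jj}^{q_{i,m}}+\beta_{jj}^{(k+1)N+1})$, the count factorises across coordinates into one-dimensional interval/lattice estimates. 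Both give the stated $4^n$ constant; your route is purely combinatorial (no measure theory needed), your observation that the containment constraint in $\mathcal{D}_{(k+1)N+1}(T'')$ can be dropped for an upper bound is exactly right, and your per-coordinate bound $\tfrac{2}{3}\beta_{jj}^{q_{i,m}-(k+1)N-1}+\tfrac{5}{3}$ is in fact sharper than what is needed, so the argument could be tightened to a smaller constant than $4^n$ (as could the paper's, incidentally, once the factor $\rho^n$ is tracked through the volume comparison). Your closing remark that the bound uses none of the game structure and is insensitive to the position of $t$ in $\{kN+1,\ldots,(k+1)N\}$ is also correct and matches how the claim is later invoked for both $t\le kN$ (applied to $T\in\mathcal{E}_{kN+1}$) and $t\in\{kN+1,\ldots,(k+1)N\}$.
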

    
    \begin{proof}[Proof of \Cref{claim:claim_3}]
        For ease of notation, we write 
        \begin{align*}
        E = \left\{ T'\in\mathcal{D}_{(k+1)N+1}(T''):T'\cap(A^{q_{i,m}}(B[0,\rho])+y_{i,m})\neq\emptyset   \right\}
        \end{align*}
        for the purposes of this proof, so that $n (T'',q_{i,m},y_{i,m}) = \# E$.
        Since elements of $\mathcal{D}_{(k+1)N+1}$ are pairwise disjoint and $\mathcal{L}^n (T') = \prod_{j=1}^{n} 2 \rho \beta_{jj}^{(k+1)N+1}$ for all $T' \in \mathcal{D}_{(k+1)N+1}$, 
        \begin{align*}
            (\# E) \left( \prod_{j=1}^{n} 2 \rho \beta_{jj}^{(k+1)N+1} \right) = \mathcal{L}^n \left( \bigcup_{U \in E} U \right),
        \end{align*}
        where $\mathcal{L}^n$ denotes the $n$-dimensional Lebesgue measure.
        Moreover, letting $y^{(j)}_{i,m}$ denote the $j$-th component of $y_{i,m}$, for all $T' \in E$,
            \begin{align*}
            T' \subseteq  \prod_{j=1}^n\left[y_{i,m}^{(j)}-(2\rho\beta_{jj}^{q_{i,m}}+2\rho\beta_{jj}^{(k+1)N+1}),\,y^{(j)}_{i,m}+(2\rho\beta_{jj}^{q_{i,m}}+2\rho\beta_{jj}^{(k+1)N+1})\right].
            \end{align*}
        Hence,
        \begin{align*}
            (\# E) \left( \prod_{j=1}^{n} 2 \rho \beta_{jj}^{(k+1)N+1} \right) &\leq \mathcal{L}^n\left( \prod_{j=1}^n\left[y_{i,m}^{(j)}-2\rho(\beta_{jj}^{q_{i,m}}+\beta_{jj}^{(k+1)N+1}),\,y^{(j)}_{i,m}+2\rho(\beta_{jj}^{q_{i,m}}+\beta_{jj}^{(k+1)N+1})\right]\right)\\
            &\leq 4^n\left(\prod_{j=1}^n\left( \beta_{jj}^{q_{i,m}}+ \beta_{jj}^{(k+1)N+1} \right)\right). \qedhere
        \end{align*}
    \end{proof}
    
    \begin{claim}\label{claim:claim_4}
    Given $t\in[kN+1,(k+1)N]\cap\N_0$ and $T''\in \mathcal{E}_t$ we have
        \begin{enumerate}[label=(\alph*)]
        \item \quad
        $\displaystyle
        \sum_{(q_{i,t},y_{i,t})\in \mathcal{A}^*_t(T'')} n(T'',q_{i,t},y_{i,t}) \,\min \left\{ 1,\left(\prod\limits_{j=1}^n \beta_{jj}^{q_{i,t}}\right)^c \,\left(\delta\prod\limits_{j=1}^n\beta_{jj}^{(k+1)N+1}\right)^{-c} \right\}$\\[0.75em]
        $\displaystyle \hphantom{\quad \sum_{(q_{i,t},y_{i,t})\in \mathcal{A}^*_t(T'')}} \leq 8^n\left(\prod\limits_{j=1}^n\beta_{jj}^{t-(k+1)N-1}\right)\max\left\{ \alpha,\left( \frac{\alpha}{\delta} \right)^c\left(\prod\limits_{j=1}^n\beta_{jj}^{(k+1)N+1-t}\right)^{1-c} \right\}$\\[0.75em]
        \item \quad
            $\displaystyle
            \sum\limits_{t=1}^{kN} \sum\limits_{(q_{i,t},y_{i,t})\in \mathcal{A}^*_t(T)} n(T,q_{i,t},y_{i,t}) \,\min \left\{ 1,\left(\prod\limits_{j=1}^n \beta_{jj}^{q_{i,t}}\right)^c \,\left(\delta\prod\limits_{j=1}^n\beta_{jj}^{(k+1)N+1}\right)^{-c} \right\}$\\[0.75em]
            $\displaystyle \hphantom{\quad \sum_{(q_{i,t},y_{i,t})\in \mathcal{A}^*_t(T'')}} \leq8^n\left(\prod\limits_{j=1}^n \beta_{jj}^{-N}\right)\left( \max\left\{ \delta, \left(\prod_{j=1}^n\beta_{jj}^{N}\right)^{1-c} \right\}\right).$
        \end{enumerate}
    \end{claim}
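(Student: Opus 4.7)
My approach to both parts of Claim~4 will be to apply the bound from Claim~3, and then split the inner sum according to whether $q_{i,t}\leq (k+1)N+1$ or $q_{i,t}>(k+1)N+1$. To ease notation, write $P_i = \prod_{j=1}^n\beta_{jj}^{q_{i,t}}$ and $Q=\prod_{j=1}^n\beta_{jj}^{(k+1)N+1}$. From Claim~3, we have
\[
    n(T'',q_{i,t},y_{i,t})\leq 4^n\prod_{j=1}^n\bigl(\beta_{jj}^{q_{i,t}-(k+1)N-1}+1\bigr).
\]
If $q_{i,t}\leq (k+1)N+1$, then each factor is bounded by $2\beta_{jj}^{q_{i,t}-(k+1)N-1}$, so $n\leq 8^nP_i/Q$; moreover, $P_i\geq Q\geq \delta Q$, so the $\min$-term equals $1$. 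If $q_{i,t}>(k+1)N+1$, then each factor is bounded by $2$, so $n\leq 8^n$; and the $\min$-term is at most $P_i^c(\delta Q)^{-c}$.

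For part~(a), fix $t\in[kN+1,(k+1)N]\cap\N_0$ and $T''\in\mathcal{E}_t$. The rules of the matrix potential game yield the key constraint $\sum_{i\in \mathcal{A}_t^*(T'')}P_i^c\leq(\alpha\prod_j\beta_{jj}^t)^c$. Since $c<1$ and $P_i\in(0,1]$, the subadditivity $(\sum P_i)^c\leq\sum P_i^c$ upgrades this to $\sum_i P_i\leq \alpha\prod_j\beta_{jj}^t$. The Case~1 contribution is then bounded by
\[
    \tfrac{8^n}{Q}\sum_{i\in I_1}P_i\;\leq\;8^n\alpha\prod_{j=1}^n\beta_{jj}^{t-(k+1)N-1},
\]
while the Case~2 contribution, applying the $c$-th power constraint directly, is bounded by
\[
    \tfrac{8^n}{(\delta Q)^c}\sum_{i\in I_2}P_i^c\;\leq\;8^n(\alpha/\delta)^c\prod_{j=1}^n\beta_{jj}^{c(t-(k+1)N-1)}=8^n\prod_{j=1}^n\beta_{jj}^{t-(k+1)N-1}\cdot(\alpha/\delta)^c\Bigl(\prod_{j=1}^n\beta_{jj}^{(k+1)N+1-t}\Bigr)^{1-c}.
\]
Combining these two bounds gives the claimed inequality, with the maximum arising since each $i$ falls into exactly one of the two cases (up to constants of the stated form).

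For part~(b), I sum the same case decomposition over $t\in\{1,\ldots,kN\}$, now exploiting the hypothesis $T\in\mathcal{D}_{kN+1}'$, which is precisely the statement
\[
    \sum_{t=1}^{kN}\sum_{(q_{i,t},y_{i,t})\in\mathcal{A}_t^*(T)}P_i^c\;\leq\;\Bigl(\delta\prod_{j=1}^n\beta_{jj}^{kN+1}\Bigr)^c.
\]
Again by the subadditivity argument, this upgrades to $\sum_{t,i}P_i\leq \delta\prod_j\beta_{jj}^{kN+1}$. The Case~1 total is then bounded by $8^nQ^{-1}\cdot\delta\prod_j\beta_{jj}^{kN+1}=8^n\delta\prod_j\beta_{jj}^{-N}$, and the Case~2 total by $8^n(\delta Q)^{-c}(\delta\prod_j\beta_{jj}^{kN+1})^c=8^n\prod_j\beta_{jj}^{-Nc}=8^n(\prod_j\beta_{jj}^{-N})(\prod_j\beta_{jj}^N)^{1-c}$. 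Summing yields the desired bound.

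The routine checks are the arithmetic of exponents and verifying that the same subadditivity argument can be applied both pointwise in $t$ (for (a)) and globally across $t\in\{1,\ldots,kN\}$ (for (b)); the only subtle point is that in (b) the $\phi$-bound on $T$ captures the \emph{total} $c$-mass of Player~II's deletions over all prior turns, which is what allows us to eliminate the sum over $t$ without picking up any $t$-dependent factor. The main obstacle I anticipate is purely bookkeeping: correctly identifying which constraint (the per-turn game rule versus the cumulative $\phi$-bound) to apply in each case, and verifying that in Case~1 the min-term is truly~$1$ (which follows from $P_i\geq Q\geq \delta Q$).
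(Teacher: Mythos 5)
The overall strategy (apply Claim~3, then split according to whether $q_{i,t}\leq (k+1)N+1$) is a legitimate reorganisation of the paper's argument, and your bookkeeping up through the two per-case bounds is correct. However, there is a genuine gap in the final ``combining'' step of part~(a), and the same gap recurs in~(b). Writing $A=8^n\alpha\prod_j\beta_{jj}^{t-(k+1)N-1}$ and $B=8^n(\alpha/\delta)^c\prod_j\beta_{jj}^{c(t-(k+1)N-1)}$, what you have established is that the full sum is at most $A+B$: the Case~1 sub-sum is $\leq A$ and the Case~2 sub-sum is $\leq B$, and summing disjoint sub-collections gives the sum of the two bounds, not their maximum. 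The claim requires $\max\{A,B\}$, and $A+B>\max\{A,B\}$ whenever both cases are non-empty, so your bound is strictly weaker (by a factor as large as $2$). Your parenthetical ``up to constants of the stated form'' does not repair this, since the constant $8^n$ is explicit in the claim and is used explicitly downstream in Claim~B and in \eqref{eq:Thm-lower-bound-ass-1}.

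The fix is small but essential: instead of bounding the Case~1 and Case~2 sub-sums separately, observe that your two per-term estimates admit a common pointwise majorant. In Case~1 you showed $n\cdot\min\leq 8^n P_i/Q = 8^n Q^{-1}P_i^c\cdot P_i^{1-c}$, and in Case~2 you showed $n\cdot\min\leq 8^n P_i^c(\delta Q)^{-c} = 8^nQ^{-1}P_i^c\cdot \delta^{-c}Q^{1-c}$; hence in \emph{both} cases $n\cdot\min\leq 8^nQ^{-1}P_i^c\max\{P_i^{1-c},\,\delta^{-c}Q^{1-c}\}$. Since the game constraint gives $P_i\leq\alpha\prod_j\beta_{jj}^{t}$ for every $i$, the bracketed maximum is uniformly bounded by $\max\{(\alpha\prod_j\beta_{jj}^{t})^{1-c},\,\delta^{-c}Q^{1-c}\}$, which is independent of $i$ and can be pulled out of the sum; the game constraint is then applied exactly once to $\sum_i P_i^c$. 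This yields precisely the max of the claim. The paper encodes this unification in a single auxiliary inequality, $\min\{1,x^c/(\gamma y)^c\}(ax+by)\leq(a+b)x^c\max\{x^{1-c},y^{1-c}/\gamma^c\}$, which is the folded form of your case analysis; your approach is a valid unrolling of it, but you must re-fold before summing. The identical correction applies to part~(b), where you should majorise each term by $8^nQ^{-1}P_i^c\max\{P_i^{1-c},\,\delta^{-c}Q^{1-c}\}$, use $P_i\leq\delta\prod_j\beta_{jj}^{kN+1}$ from the $\phi$-bound to freeze the maximum, and then apply the cumulative constraint once to $\sum_{t,i}P_i^c$.
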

        
    \begin{proof}[Proof of \Cref{claim:claim_4}]
        By \Cref{claim:claim_3},
        \begin{align}
        &\sum_{(q_{i,t},y_{i,t})\in \mathcal{A}^*_t(T'')} \,\, n(T'',q_{i,t},y_{i,t}) \,\min \left\{ 1,\left(\prod\limits_{j=1}^n \beta_{jj}^{q_{i,t}}\right)^c \,\left(\delta\prod\limits_{j=1}^n\beta_{jj}^{(k+1)N+1}\right)^{-c} \right\}\nonumber\\
        &\leq 4^n\left(\prod\limits_{j=1}^n \beta_{jj}^{(k+1)N+1}\right)^{-1}\sum_{(q_{i,t},y_{i,t})} \,\,\min \left\{ 1, \frac{\left(\prod\limits_{j=1}^n \beta_{jj}^{q_{i,t}}\right)^c}{\left(\delta\prod\limits_{j=1}^n\beta_{jj}^{(k+1)N+1}\right)^c} \right\}\left(\prod_{j=1}^n\left( \beta_{jj}^{q_{i,t}}+2\rho \beta_{jj}^{(k+1) N+1} \right)\right)\!.\label{eq:first_inequality_of_proof_of_claim_4}
        \end{align}
        By the binomial theorem, there exist constants $a_{i,t}, b_{i,t} > 0$ with $a_{i,t}+b_{i,t} = 2^n$, such that 
        \begin{align}\label{EQ:sum-product-inequality}
        \prod\limits_{j=1}^n\left( \beta_{jj}^{q_{i,t}}+ \beta_{jj}^{(k+1)N+1} \right)\leq a_{i,t}\prod_{j=1}^n \beta_{jj}^{q_{i,t}}+b_{i,t}\prod_{j=1}^n2\beta_{jj}^{(k+1) N+1}.
        \end{align}
        Moreover, we note that for all $x,y,a,b,c,\gamma > 0$,
        \begin{align}\label{magicinequal}
            \min\left\{ 1,\frac{x^c}{(\gamma y)^c} \right\}(ax+by)\leq (a+b)x^c\max\left\{ x^{1-c},\frac{y^{1-c}}{\gamma^c} \right\}.
        \end{align}
        This can be verified by considering separately the cases $x \leq y$ and $y \leq x$.
        
        Applying \eqref{EQ:sum-product-inequality} and \eqref{magicinequal}, with $\gamma=\delta$, $x=x_{i,t}=\prod_{j=1}^n \beta_{jj}^{q_{i,t}},y=\prod_{j=1}^n \beta_{jj}^{(k+1)N+1}$, to \eqref{eq:first_inequality_of_proof_of_claim_4} we obtain
        \begin{align}
        &\hspace{-1em}\sum_{(q_{i,t},y_{i,t}) \in \mathcal{A}^*_t(T'')} \,\, n(T'',q_{i,t},y_{i,t}) \,\min \left\{ 1,\left(\prod\limits_{j=1}^n \beta_{jj}^{q_{i,t}}\right)^c \,\left(\delta\prod\limits_{j=1}^n\beta_{jj}^{(k+1)N+1}\right)^{-c} \right\}\nonumber\\[0.25em]
        &=\sum_{(q_{i,t},y_{i,t})\in \mathcal{A}^*_t(T'')} \,\, n(T'',q_{i,t},y_{i,t}) \,\min \{ 1, x_{i,t}^{c} (\delta y)^{-c} \}\nonumber\\[0.25em]
        &\leq 4^n y^{-1}\sum_{(q_{i,t},y_{i,t})\in \mathcal{A}^*_t(T'')} \,\,\min \left\{ 1, x_{i,t}^{c} (\delta y)^{-c} \right\}(a_{i,t}x_{i,t}+b_{i,t}y)\nonumber\\[0.25em]
        &\leq 8^n y^{-1}\sum_{(q_{i,t},y_{i,t})\in \mathcal{A}^*_t(T'')} x_{i,t}^c\max\{ x_{i,t}^{1-c},\delta^{-c}y^{1-c} \}\nonumber\\[0.25em]
        &= 8^n\left(\prod\limits_{j=1}^n \beta_{jj}^{(k+1)N+1}\!\right)^{\!\!-1}\!\!\!\!\!\!\sum_{(q_{i,t},y_{i,t})\in \mathcal{A}^*_t(T'')} \!\!\left(\prod\limits_{j=1}^n \beta_{jj}^{q_{i,t}}\!\right)^{\!\!c}\!\!\max\left\{ \left(\prod\limits_{j=1}^n \beta_{jj}^{q_{i,t}}\!\right)^{\!\!1-c}\hspace{-1.25em}, \hspace{0.75em}\delta^{-c} \left(\prod\limits_{j=1}^n \beta_{jj}^{(k+1)N+1}\!\right)^{\!\!1-c} \right\}\!.\label{eq:***}
        \end{align}
        Recall that by the rules of the game, we require
            \begin{align}\label{EQ:play-restriction-inequality}
            \sum\limits_{(q_{i,t},y_{i,t})} \,\,\left(\prod\limits_{j=1}^n \beta_{jj}^{q_{i,t}}\right)^c\leq \left( \alpha\prod_{j=1}^n\beta_{jj}^{t} \right)^c,
            \end{align}            
        which implies that for all $(q_{i,t},y_{i,t}) \in \mathcal{A}^*_t(T'')$,
            \begin{align}\label{eq:******}
            \prod\limits_{j=1}^n \beta_{jj}^{q_{i,t}} \leq\alpha\prod_{j=1}^n\beta_{jj}^{t}.
            \end{align}
        Combining \eqref{EQ:play-restriction-inequality}, and hence also \eqref{eq:******}, with \eqref{eq:***} yields (a).
        To obtain (b), we note, since $T\in \mathcal{D}_{kN+1}'$, that
        \begin{align}\label{eq:****}
            \sum\limits_{t=1}^{kN} \sum\limits_{(q_{i,t},y_{i,t})\in \mathcal{A}^*_t(T)}\left(\prod\limits_{j=1}^n \beta_{jj}^{q_{i,t}}\right)^c\leq \left(\delta\prod\limits_{j=1}^n  \beta_{jj}^{kN+1}\right)^c
        \end{align}    
        which implies that for all $t \in \{1, \dots, kN \}$ and  $(q_{i,t},y_{i,t}) \in \mathcal{A}^*_t(T)$,
        \begin{align}\label{eq:*****}
            \prod\limits_{j=1}^n \beta_{jj}^{q_{i,t}} \leq \delta\prod\limits_{j=1}^n \beta_{jj}^{kN+1}.
        \end{align}
     Using similar arguments, but with \eqref{eq:****}, and hence also \eqref{eq:*****}, one can obtain (b).
    \end{proof}
    
    Returning to the proof of Claim \ref{claim:claim_B}, we now obtain that 
    \begin{align*}
        &\#(\mathcal{D}_{(k+1)N+1}(T)\setminus\mathcal{D}_{(k+1)N+1}')\\
        &\leq \sum\limits_{t=1}^{kN} \sum\limits_{(q_{i,t},y_{i,t})\in \mathcal{A}^*_t(T)} n (T,q_{i,t},y_{i,t}) \, \min \left\{ 1, \left(\prod\limits_{j=1}^n \beta_{jj}^{q_{i,t}}\right)^c \left(\delta\prod\limits_{j=1}^n\beta_{jj}^{(k+1)N+1}\right)^{-c} \right\}\\
        &\hspace{1em} +\sum_{t=kN+1}^{(k+1)N}\, \sum_{\substack{T''\in\mathcal{E}_t\\ 
        T''\subseteq T}}\,\,\sum_{\substack{(q_{i,t},y_{i,t})\\ \in \mathcal{A}^*_t(T'')}} n(T'',q_{i,t},y_{i,t}) \, \min \left\{ 1, \left(\prod\limits_{j=1}^n \beta_{jj}^{q_{i,t}}\right)^c \left(\delta\prod\limits_{j=1}^n\beta_{jj}^{(k+1)N+1}\right)^{-c} \right\}\\
        &\leq 8^n \left(\prod\limits_{j=1}^n \beta_{jj}^{-N}\right)  \max\left\{ \delta, \left(\prod_{j=1}^n\beta_{jj}^{N}\right)^{1-c} \right\}\\
        &\hspace{1em} +8^n\sum\limits_{t=kN+1}^{(k+1)N}\#\{ T''\in\mathcal{E}_t:T''\subseteq T \} \left(\prod\limits_{j=1}^n\beta_{jj}^{t-(k+1)N-1}\right)\max\left\{ \alpha,\left( \frac{\alpha}{\delta} \right)^c\left(\prod\limits_{j=1}^n\beta_{jj}^{(k+1)N+1-t}\right)^{1-c} \right\},
    \end{align*}
    where the first inequality follows by combining \eqref{EQ:small-St-bound} and \eqref{EQ:big-St-bound}, and the second is a consequence of \Cref{claim:claim_4}.
    Observe, for all $t \in \{kN+1,\ldots,(k+1)N\}$, that the sets $\{T''\in\mathcal{E}_t:T''\subseteq T \}$ and
    \begin{align*}
        \bigg\{ A^{t}(B[0, \rho]) + \left( \frac{\rho}{2}\beta_{11}^{t}x_{1}, \dots, \frac{\rho}{2}\beta_{nn}^{t}x_{n} \right) : \;
        &\text{for all} \; j\in \{1,...,n\}, x_{j} \in \Z \; \text{and} \\
        &\left\lvert 3\rho\beta_{jj}^{kN+1}r_{j}-\frac{\rho}{2}\beta_{jj}^tx_{j} \right\rvert \leq \rho\beta_{jj}^{kN+1}-\rho\beta_{jj}^t \;  \bigg\},
    \end{align*}
    where $(r_{1}, \dots, r_{n})$ is as in \eqref{eq:defn_of_r}, are equal. Therefore, if $x \in \Z^{n}$ is such that 
        \begin{align*}
        A^{t}(B[0, \rho]) + \left( \frac{\rho}{2}\beta_{11}^{t}x_{1}, \dots, \frac{\rho}{2}\beta_{nn}^{t}x_{n} \right) \in \{T''\in\mathcal{E}_t:T''\subseteq T \},
        \end{align*}
    then for each $j \in \{1,...,n\}$ we have that 
    \begin{align*}
        x_{j} \in [6 r_{j} \beta_{jj}^{(kN+1)-t}+2-2\beta_{jj}^{(kN+1)-t},6r_{j}\beta_{jj}^{(kN+1)-t}-2+2\beta_{jj}^{(kN+1)-t}]\cap \Z.
    \end{align*}
    Therefore, the set $\{T''\in\mathcal{E}_t:T''\subseteq T \}$, contains at most $4\beta_{jj}^{kN+1-t}$ elements.
    Thus, we have the following upper bound for $\#(\mathcal{D}_{(k+1)N+1}(T)\setminus\mathcal{D}_{(k+1)N+1}')$:
    \begin{align*}
     &8^n\left(\prod\limits_{j=1}^n \beta_{jj}^{-N}\right)\left( \max\left\{ \delta, \left(\prod_{j=1}^n\beta_{jj}^{N}\right)^{1-c} \right\} +4^n\sum\limits_{t=kN+1}^{(k+1)N}\max\left\{ \alpha,\left( \frac{\alpha}{\delta} \right)^c\left(\prod\limits_{j=1}^n\beta_{jj}^{(k+1)N+1-t}\right)^{1-c} \right\}\right)\\
    &\leq 8^n\left(\prod\limits_{j=1}^n \beta_{jj}^{-N}\right)\left( \max\left\{ \delta, \left(\prod_{j=1}^n\beta_{jj}^{N}\right)^{1-c} \right\} +4^n\sum\limits_{t=kN+1}^{(k+1)N}\alpha+4^n\sum\limits_{t=kN+1}^{(k+1)N}\left( \frac{\alpha}{\delta} \right)^c\left(\prod\limits_{j=1}^n\beta_{jj}^{(k+1)N+1-t}\right)^{1-c} \right)\\
    &\leq 8^n\left(\prod\limits_{j=1}^n \beta_{jj}^{-N}\right)\left( \max\left\{ \delta, \left(\prod_{j=1}^n\beta_{jj}^{N}\right)^{1-c} \right\} +4^nN\alpha+4^n\sum\limits_{t=0}^{\infty}\left( \frac{\alpha}{\delta} \right)^c\left(\prod\limits_{j=1}^n\beta_{jj}^{t}\right)^{1-c} \right)
    \end{align*}
    It remains to bound each of the three terms in the final bracket.
    Recall that $N = \lfloor \delta \alpha^{-1} \rfloor$, so we have that $N \alpha \leq \delta$.
    We next show that $(\alpha \delta^{-1})^{c} \sum_{t=0}^{\infty} (\prod_{j=1}^n \beta_{jj})^{(1-c)t} \leq \delta$.
    To this end, recalling
    \begin{align*}
        \alpha^c \leq \delta^{2} \left( 1 - \left( \prod_{j=1}^{n} \beta_{jj} \right)^{1-c} \right),
    \end{align*}
    we have that
    \begin{align*}
        \left( \frac{\alpha}{\delta} \right)^c\sum\limits_{t=0}^{\infty}\left(\prod\limits_{j=1}^n\beta_{jj}\right)^{(1-c)t}
        = \left( \frac{\alpha}{\delta} \right)^c\left(1-\left(\prod\limits_{j=1}^n\beta_{jj}\right)^{(1-c)}\right)^{-1}
        \leq \delta^{2-c}\leq \delta.
    \end{align*}
    Finally, we show that $\prod_{j=1}^n \beta_{jj}^{N(1-c)} \leq \delta$.
    Since $\alpha^c \leq \delta^2$, we have that $(\delta \alpha^{-1})^c \geq\delta^{-(2-c)} \geq 1$, which implies $\delta \alpha^{-1} \geq 1$; thus, $N = \lfloor \delta \alpha^{-1} \rfloor \geq 2^{-1} \delta \alpha^{-1}$.
    Further, since $\alpha, c \in (0, 1)$ and since the function $x \mapsto \log (x^{-1}) + x - 1$ is positive for all $x \in (0,1]$,
\begin{align*}
        \alpha
        \leq \alpha^c\leq \delta^{2} \left(1-\left(\prod_{j=1}^n\beta_{jj}\right)^{1-c}\right)
        \leq \delta^{2} \left|\log\left(\left(\prod_{j=1}^n \beta_{jj} \right)^{1-c}\right)\right| 
        = \delta^{2}(1-c)\left|\log\left(\prod_{j=1}^n \beta_{jj} \right)\right|,
        \end{align*}
        This is tandem with the fact that $(2 \delta)^{-1} \geq - \log(\delta)$, yields
        \begin{align*}
            N(1-c)\left|\log\left(\prod_{j=1}^n \beta_{jj} \right)\right|\geq N\alpha \delta^{-2} \geq \frac{1}{2}\delta \delta^{-2} \geq |\log(\delta)|.
        \end{align*}
    Since $\delta<1$ and $\prod_{j=1}^n\beta_{jj}<1$, it follows that $\prod_{j=1}^{n} \beta_{jj}^{N(1-c)} \leq \delta$.
    Therefore, 
    \begin{align*}
        \#(\mathcal{D}_{(k+1)N+1}(T)\setminus\mathcal{D}_{(k+1)N+1}')\leq 8^n\left(\prod\limits_{j=1}^n \beta_{jj}^{-N}\right) (1+2^{2n+1})\delta
    \end{align*}
    and hence a lower bound for $\#( \mathcal{D}_{(k+1)N+1}(T)\cap \mathcal{D}_{(k+1)N+1}')$ is
    \[
            \max\left\{0, \left(\prod_{j=1}^n\beta_{jj}^{-N}\right)\left( \prod_{j=1}^n\left( \frac{1}{3}(1-5\beta_{jj}^N)\right)-8^n (1+2^{2n+1})\delta\right)\right\}.
            \qedhere
    \]
    \end{proof}
\end{document}